\documentclass{article}

\usepackage[english]{babel}
\usepackage{tikz}
\usepackage{tikz-3dplot}
\usepackage[letterpaper,top=2cm,bottom=2cm,left=3cm,right=3cm,marginparwidth=1.75cm]{geometry}
\usepackage{ulem}
\usepackage[utf8]{inputenc}

\usepackage{amssymb,amsfonts}
\usepackage{enumitem} 
\usepackage{amsmath} 
\usepackage{hyperref}
\usepackage{prettyref}

\usepackage{amsthm}
\usepackage{fullpage}
\usepackage{graphicx}
\usepackage{xcolor}
\usepackage{comment}
\usepackage{subcaption}
\usepackage{todonotes}
\usepackage{color}
\usepackage{authblk}
\usepackage{ulem}
\usepackage{epstopdf}
\usepackage{algorithm}
\usepackage{algorithmic}

\DeclareMathOperator*{\argmin}{arg\,min}

\title{Spectra of Multivariate Vandermonde Matrices and Min-max Error Bounds for the High-dimensional Super-resolution Problem}
\author[1]{Nuha Diab}
\author[2]{Dmitry Batenkov}
\affil[1]{Department of Applied Mathematics, Tel Aviv University, Tel Aviv, Israel}
\affil[2]{Basis Research Institute, NYC, USA}

\newcommand{\ci}{{\cal X}} 
\newcommand{\qq}{{\cal Q}} 
\newcommand{\Gg}{{\cal G}} 
\newcommand{\Zz}{{\cal Z}}
\newcommand{\Tt}{{\cal T}}
\newcommand{\Rr}{{\cal R}}
\newcommand{\EE}{{\cal E}}

\newcommand{\td}[1][d]{\mathbb{T}^{#1}} 

\newcommand{\uu}{\boldsymbol{u}} 
 
\newcommand{\Ll}{\mathcal{L}}

\newcommand{\ZZ}{\mathbb{Z}}
\newcommand{\NN}{\mathbb{N}}

\newcommand{\RR}{\mathbb{R}}
\newcommand{\TT}{\mathbb{T}}
\newcommand{\CC}{\mathbb{C}}

\newcommand{\srf}{\text{SRF}}

\DeclareMathOperator\supp{supp}
\DeclareMathOperator{\rank}{rank} 

\DeclareMathOperator{\diag}{diag}

\newtheorem{theorem}{Theorem}
\newtheorem{lemma}{Lemma}
\newtheorem{definition}{Definition}
\newtheorem{proposition}{Proposition}
\newtheorem{remark}{Remark}
\newtheorem{corollary}{Corollary}
\begin{document}
\maketitle

\begin{abstract}
We study multivariate rectangular Vandermonde matrices \(U_N\) whose rows are indexed by the grid
\(
\Gg_N=\{-N,\ldots,N\}^d,
\)
and whose columns correspond to irregularly spaced points on the unit $d$-dimensional torus. Motivated by the problem of super-resolution (recovery of spike trains from low frequency noisy measurements), we consider configurations in which the points form several clusters, with points within each cluster separated by at most \(h\) and distinct clusters are well-separated. We show that the column subspaces associated with different clusters are nearly orthogonal, i.e. for $N \rightarrow \infty, Nh \ll 1$, the minimal principal angle between any such pair is at least
\[
    {\pi \over 2} - \frac{C_1}{N} - C_2Nh - \frac{C_3}{\sqrt{N}},
\]
for some constants $C_1,C_2,C_3$ independent of $N$ and $h$.
This near-orthogonality enables the spectral analysis of such \(U_N\) to be reduced to the analysis of each individual cluster.

As a consequence, we derive sharp scaling for all singular values of \(U_N\), revealing how the geometry of clustered points controls the conditioning of the high-dimensional super-resolution problems. Then we establish min-max error bounds for the recovery of clustered point sources, requiring only an upper bound on the largest cluster size and no knowledge of the
geometry within the clusters. These bounds coincide with the optimal rates known in the one-dimensional setting. Furthermore, when the clustered points are in generic position (i.e. do not lie on a low-dimensional algebraic variety) our results indicate that super-resolution in high dimensions is more stable than in one dimension.
\end{abstract}

\section{Introduction}\label{sec:intro}

Let $\TT^d := (-\pi,\pi]^d\equiv \left(\RR\mod 2\pi\right)^d$, let $\ci := \{x_j\}_{j=1}^n \subset \TT^d$ be a set of $n$ distinct nodes, and let
\begin{equation}\label{eq:grid}
    \Gg_N := \{-N,\dots,-1,0,1,\dots,N\}^d \subset \ZZ^d
\end{equation}
denote the frequency grid of bandwidth $N$. The main object we study in this paper is the Vandermonde matrix of the nodes (i.e.\ the high-dimensional Vandermonde matrix), defined as follows
\begin{equation} \label{eq:multi-van}
    U(\ci;\Gg_N) := \big[\exp{\imath \langle \omega, x_j \rangle}\big]_{\omega \in \Gg_N}^{j=1,\dots,n}\in\CC^{|\Gg_N|\times n},
    \qquad \langle \omega, x_j \rangle := \sum_{k=1}^d \omega_k x_{j,k}.
\end{equation}
Set $U_N := U(\ci;\Gg_N)$ whenever the node set is clear from the context and let $\Delta$ be the minimal separation between the nodes. In the regime $N\Delta \ll 1$, our goal is to determine the scaling of all the singular values of $U_N$, together with their multiplicities, in terms of the geometry of the node set $\ci$. The main application, developed in the second part of this introduction and in Section~\ref{sec:min-max}, is a min-max error bound for the high-dimensional super-resolution problem.

\par Vandermonde matrices arise naturally in a wide range of problems in numerical
analysis, including polynomial interpolation, approximation theory, nonuniform discrete Fourier transforms, and quadrature
rules, and have received sustained attention in the literature
\cite{beckermann2000condition, beckermann2019bounds, pan2016bad, gautschi1962inverses,
Gautschi1963, barnett2022,wilber2025}. They also appear prominently in signal processing, particularly in
spectral estimation and direction-of-arrival problems, where they are closely tied
to data covariance and sampling operators \cite{lee1992eigenvalues, stoica2005spectral,
tuncer2009classical}. Within the applied harmonic analysis community, Vandermonde matrices have more recently attracted interest through their role in mathematical super-resolution \cite{batenkov2018stability, batenkov2021a, demanet2015, Vandermonde, kunis2020a, diederichs2019well,fei2024towards}, where the smallest singular value of $U_N$ controls both the optimal reconstruction bounds (i.e. min-max error bounds) and the robustness of subspace methods \cite{li2019conditioning,li2020a}; we return to this application below.

\par The spectrum of $U_N$ is determined by the geometry of $\ci$ relative to the scale $\frac{1}{N}$, the Rayleigh--Nyquist limit. Let $\Delta$ denote the minimal separation between any two nodes. When the nodes are well separated, i.e.\ $\Delta \gtrsim \frac{1}{N}$, the matrix $N^{-\frac{d}{2}}U_N$ is uniformly well conditioned \cite{kunis2021,aubel2019vandermonde}. The difficult regime is the opposite one, in which some of the nodes are much closer to each other than $\frac{1}{N}$ and for example form clusters leading to ill-conditioning of the matrix $U_N$.

\par The complete spectral properties of $U_N$ (e.g.\ the asymptotic scaling of all the singular values as $N\Delta \rightharpoondown 0$, approximations to the singular vectors) are of both theoretical and practical interest. For instance, the number of singular values above a noise threshold would count the degrees of freedom in a signal, and the corresponding signal subspace would provide a stable projection basis which can be useful for recovery algorithms (consider for instance the SVD thresholding in classical inverse problems \cite{hansen2010discrete}). Moreover, singular values of sampling matrices (such as Hankel and Toeplitz matrices) which contain Vandermonde matrices in their decomposition, are used for detecting admissible sampling rates for the univariate super-resolution problem \cite{diab2025selecting}.

\par The spectrum of the matrix $U_N$ in this regime has been investigated in the one-dimensional case \cite{batenkov2021}. The main assumption is that the nodes $\{x_j\}$ are divided into disjoint clusters $\mathcal{C}_i$, each of which is contained in an interval of length at most $\Delta \leq {1\over N}$, while the inter-cluster distances are at least $\eta \geq {1\over N}$. Their main result establishes that the subspaces of $\mathbb{C}^{N+1}$ corresponding to each cluster (the so-called ``cluster subspaces'') are nearly orthogonal. As a result, spectral analysis of $U_N$ is significantly simplified, reducing the problem to the analysis of each cluster separately. Thus the singular values of $U_N$ can be expressed as a scaled union of the singular values of each $U_N(\mathcal{C}_i)$. The derivation of the singular values is asymptotic only. In more detail, when the size of all the clusters is of the same order $\Delta$, the singular values of $N^{-{1\over 2}}U_N$ have the following simple scales (up to constants):
\begin{align*}
    {(N\Delta)}^0, {(N\Delta)}^1, \dots, {(N\Delta)}^{\ell -1},
\end{align*}
where $\ell$ denotes the number of nodes in the largest cluster. Furthermore, the number of singular values scaling as ${(N\Delta)}^{j -1}$ is exactly equal to the number of clusters with at least $j$ nodes.

\par In dimension $d>1$ the picture changes, since the scaling of the singular values is no longer determined by the number of nodes in each cluster alone, but also by the geometry of the nodes within a cluster (i.e.\ the algebraic variety the points lie on). In our recent work \cite{diab2024spectral}, we find the scaling of all the singular values of $U_N$ when the nodes form only a single cluster. In more detail, for a single cluster of size of order $\Delta$, the singular values of $N^{-{d\over 2}}U_N$ split into $m+1$ groups with the following scalings:
\[
   {(N\Delta)}^{k}, \quad k=0,\dots,m,
\]
where the number of elements in each group is determined explicitly by the geometry of the cluster. Here $m=\mu(\ci)$ is the \emph{discrete moment order} of the cluster (Definition~\ref{def:mu-def}): the smallest degree at which
polynomials interpolate the nodes, i.e.\ the least $m$ such that for any set of values $v_1,\dots,v_n \subset \mathbb{C}$, there exists a polynomial $p$ of degree at most $m$ such that $p(x_j)=v_j$ for $j=1,\dots,n$.

For instance, three nodes in $\RR^2$ forming a triangle are interpolated by an affine function
$a+bx_1+cx_2$, so $m=1$ and only two scales appear, $(Nh)^0$ and $Nh$, the latter twice; three collinear nodes reduce to the univariate problem at three points and need a quadratic, so $m=2$ and the three scales $(Nh)^0$, $Nh$, ${(Nh)}^{2}$ each appear once.

\par In this work, we extend the near-orthogonality technique introduced in
\cite{batenkov2021} to the high-dimensional case, with the goal of determining the
scaling of the singular values of $U(\ci;\Gg_N)$ in terms of the geometry of the
clusters formed by the node set $\ci$. Establishing near-orthogonality in this
setting requires applying a new ``projection'' technique to the multivariate
Vandermonde matrix \eqref{eq:multi-van}, described in detail in the next section.
This yields a complete description of the spectrum for several clusters with
arbitrary geometries (Theorem~\ref{thm:entire-spec}): when the sizes of all the
clusters are of the same order $\Delta$, the singular values of
$N^{-{d\over 2}}U_N$ have the scales (up to constants)
\begin{align*}
    {(N\Delta)}^{0}, {(N\Delta)}^{1}, \dots, {(N\Delta)}^{m},
\end{align*}
where $m:=\max_i \mu(\mathcal{C}_i)$ and the multiplicity of each scale is
obtained by summing the corresponding contributions of the individual clusters. In
particular, the smallest singular value of $U(\ci;\Gg_N)$ scales as
${(N\Delta)}^{m}$.

\par One of the first results on the scaling of the smallest singular value of $U_N$ appears in \cite{kunis2020a}. There, the authors establish a lower bound that scales like $(N\Delta)^{\ell-1}$, with explicit constants independent of $N$ and $\Delta$, where $\ell$ denotes the number of nodes in the largest cluster. They also obtain matching upper bounds for the case $\ell = 2$ and $d \geq 1$: in this case, the nodes within a cluster lie on a line, which represents the worst-case geometry and reduces the problem to the one-dimensional setting. They further provide examples of node configurations in $\RR^2$ in different geometries for which the lower bound is not sharp and depends on the geometry within the clusters.

\par A recent improvement on this result is established in \cite{li2024a}. There, the lower bounds are also established for nodes in different geometries (where the node don't necessarily form clusters). In more detail, if the nodes lie near the union of $r$ hyperplanes, the lower bound would scale like $(N\delta)^r$, where $\delta$ is the maximal distance between any node and the hyperplanes. We emphasize that this result considers the geometry of all the nodes (and not only the geometry of cluster nodes as in our case). Moreover, for nodes in general position, the provided lower bound scales like $(N\Delta)^{\lceil{\ell - 1 \over d}\rceil}$. Comparing the latter result to ours, we provide both lower and upper bounds that
scale like $(N\Delta)^m$; in general position $m$ takes its smallest value, namely
the smallest integer with ${d+m-1\choose d}<\ell\leq {d+m\choose d}$ (see
Definition~\ref{def:mu-def} and the discussion following it). For $d > 1$, we have
$m < {\lceil{\ell - 1 \over d}\rceil}$ (and equality holds for $d=1$), but in our
result we assume the clusters are well-separated and in \cite{li2024a} there is no
such assumption.

\par We now turn to the main application of the above, the super-resolution problem. Let $\delta(\cdot)$ denote the Dirac delta distribution, and consider the signal
\begin{equation} \label{eq:spike}
    f(t) := \sum_{j=1}^n\alpha_j\delta (t-x_j), \quad t, x_j \in \TT^d, \quad \alpha_j \in \CC,
\end{equation}
to be recovered. We refer to $\{x_j\}$ as the ``nodes'' and to $\{\alpha_j\}$ as the ``coefficients'' or ``amplitudes''. The noisy measurements are given by
\begin{equation} \label{eq:samples}
    \hat{f}(\omega) := \sum_{j=1}^n \alpha_j e^{\imath \langle \omega,x_j \rangle}+\epsilon(\omega), \quad \omega \in \Gg_N,
\end{equation}
where $|\epsilon(\omega)|\leq \epsilon$. In matrix form these measurements read $\hat f = U(\ci;\Gg_N)\alpha + \epsilon$, which is precisely why the spectrum of $U_N$ governs the stability of the recovery. This problem appears in a wide range of applications, including direction-of-arrival (DOA) estimation \cite{stoica2005spectral}, radar and inverse scattering \cite{cuyt2020sparse}, fluorescence microscopy and single-molecule imaging \cite{bhandari2016signal}, and more generally in multidimensional spectral estimation and imaging \cite{lindberg2012,bertero1996}.

\par When the minimum separation $\Delta$ between any two spikes (or nodes) is less than $\frac{1}{N}$, the inverse problem becomes ill-conditioned and stability is not guaranteed~\cite{donoho1992a}. For $d=1$, this occurs because applying the inverse Fourier transform to band-limited data will not resolve two nodes that are closer than $\frac{1}{N}$. This regime is known as the super-resolution regime and is characterized by the \textit{super-resolution factor}
\begin{equation}\label{def:srf}
   \srf := \frac{1/N}{\Delta} = \frac{1}{N\Delta}
\end{equation}
being larger than $1$, while the well-separated case corresponds to $\srf \leq 1$. Extensive research has been conducted on one-dimensional scenarios, addressing both well-separated and super-resolution regimes \cite{fernandez2016super,duval2017sparse,candes2014,Li&Liao2020,cuyt2018,petz2021,batenkov2021a,Vandermonde}. Also in the high-dimensional setting, the $\srf$ is defined as in \eqref{def:srf}. This matches the $1$D case since the sampling kernel associated with $\Gg_N$ factorizes into one-dimensional Dirichlet kernels, yielding a resolution limit of order $1/N$ in each coordinate.

\par A major challenge in the field is to determine resolution limits based on
prior knowledge about the unknown signal and the limitations of the measurement
system  \cite{batenkov2021a}. To address this rigorously, we examine the so called min-max error bounds,
which characterize the worst-case performance of the best possible recovery
algorithm. These error bounds provide theoretical guarantees and when an algorithm
achieves them in some sense (for example asymptotically), it is considered optimal.
The importance of these bounds is not only in proving the optimality of methods but
also in understanding how the prior knowledge affects the stability and accuracy of
certain reconstructions.

\par Using the scaling of the smallest singular value of $U_N$ derived in
Theorem~\ref{thm:entire-spec}, we establish min-max error bounds for nodes
restricted to a $d$-dimensional grid with spacing $\Delta$ (referred to as ``on-grid'' min-max error bounds). Consider signals of the form \eqref{eq:spike} whose nodes form multiple clusters, with an upper bound $\ell$ on the largest cluster size. We prove in
Theorem~\ref{theorem:min-max-error} that the min-max error bounds scale like ${\srf}^{2\ell - 1}\epsilon$.
The exponent $2\ell-1$ exactly matches the univariate case; indeed, the lower bounds are achieved by restricting the nodes to be collinear. A consequence worth emphasizing is that these bounds are pessimistic for generic configurations. When the clustered nodes are randomly positioned, and thus do not lie on any low-dimensional algebraic variety, our result on the smallest singular value suggests that super-resolution in high dimensions is in fact more stable than in 1D; see the discussion in Section~\ref{subsec:min-max-random}.

\par Similar ``off-grid'' min-max error bounds were derived in \cite{liu2024improved}
(which is an improvement to \cite{liu2021m}), where the nodes are not restricted to a grid, form a single cluster only, and
the cluster is assumed to be centered around zero. In our result, we generalize the
latter and assume multiple well-separated clusters centered around a given set of
points (named reference points, see Definition~\ref{def:config-pro}). The price is
that our nodes are restricted to the grid with spacing $
\Delta$; establishing ``off-grid'' bounds appears to require multidimensional ``confluent'' Vandermonde matrices,
cf.\ \cite{batenkov2013algebraic, batenkov2013c, batenkov2018stability,
batenkov2023}, and we leave that to future work.

\par This paper is organized as follows. First, in Sections~\ref{sec:pre} and~\ref{sec:proof-outline}, we present the definitions and notations alongside an outline of the proof of Theorem~\ref{thm:entire-spec} (i.e., the scaling of all singular values of $U_N$). Then, the proof itself is presented in Sections~\ref{sec:ex-good-proj}, \ref{sec:van-row-par}, \ref{sec:cl-sub-orth} and \ref{spec-dis}.  
Finally, in Section~\ref{sec:min-max}, we derive the min-max error bounds and provide numerical results showing that the 2D-ESPRIT \cite{ESPRIT1990, sahnoun2017} attains the optimal bounds, in the case of $d=2$, without any prior knowledge of the cluster geometries. 

\section{Preliminaries}\label{sec:pre}
We recall some definitions and notations from $\cite{batenkov2021}$ and generalize them for the high-dimensional setting ($d \geq 1$).
\begin{definition}\label{def:wrap_dist}
    For $x,y \in \TT^d$, we denote the wrap-around distance by
    \[ \|x-y\|_{\TT^d}:=\min_{r \in (2\pi \ZZ)^d}\|x-y+r\|_{\infty}. \]
\end{definition}

\begin{definition}\label{def:single_cluster}
    The node set $\mathcal{X}=\{x_1,...,x_n\} \subset \TT^d$ is said to form 
    \begin{itemize}
        \item an $(h,n)$-cluster if
        \[ \forall x,y \in \mathcal{X}, x \neq y: \quad 0 \leq \|x-y\|_{\TT^d} \leq h;\]
        \item an $(h,\tau,n)$-cluster, for some $\tau>0$, if
        \[ \forall x,y \in \mathcal{X}, x \neq y: \quad \tau h \leq \|x-y\|_{\TT^d} \leq h.\]
    \end{itemize}
\end{definition}

\begin{definition}\label{def:multi_cluster}
    The node set $\mathcal{X}=\{x_1,...,x_n\} \subset \TT^d$ is said to form an $((h^{(j)},n^{(j)})_{j=1}^M),\eta)$ (respectively, $((h^{(j)},\tau^{(j)},n^{(j)})_{j=1}^M),\eta)$)-clustered configuration if there exists an $M$-partition $\ci = \bigsqcup_{j=1}^{M} \mathcal{C}^{(j)}$, such that for each $j\in \{1,\dots,M\}$ the following conditions are satisfied:
    \begin{itemize}
        \item $\mathcal{C}^{(j)}$ is an $(h^{(j)},n^{(j)})$ (respectively, an $(h^{(j)},\tau^{(j)},n^{(j)})$)-cluster.
        \item $\|x-y\|_{\TT^d} \geq \eta > 0. \ \forall x \in \mathcal{C}^{(j)}, \forall y \in \ci \setminus\mathcal{C}^{(j)}$.
    \end{itemize}
\end{definition}
We denote by $\|\cdot\| := \|\cdot\|_2$ thorough the paper.

\begin{definition}
    For any $p \in \mathbb{N} \cup \{0\}$, we write $K \asymp N^{p}$ if there exist constants 
    $0 < C_1 \leq C_2$, independent of $N$, such that
    \[
        C_1 N^{p} \leq K \leq C_2 N^{p}.
    \]
\end{definition}

In addition, for any set of nodes $\mathcal{Y}\subset\TT^d$, we denote by $U_N(\mathcal{Y}) := U(\mathcal{Y},
\Gg_N)$ where $U(\mathcal{Y},\Gg_N)$ is defined in \eqref{eq:multi-van}.

\par We now recall some definitions from \cite{diab2024spectral}.

For a multi-index
\(
\boldsymbol{\alpha}=(\alpha_1,\ldots,\alpha_d)\in\mathbb{Z}_+^d,
\)
we define its total degree and factorial by
\[
|\boldsymbol{\alpha}|
=
\sum_{j=1}^{d}\alpha_j,
\qquad
\boldsymbol{\alpha}!
=
\alpha_1!\cdots\alpha_d!.
\]

For every nonnegative integer \(j\), let
\[
\mathbb{P}_j
:=
\{\boldsymbol{\alpha}\in\mathbb{Z}_+^d:\ |\boldsymbol{\alpha}|\le j\},
\]
denote the set of multi-indices of degree at most \(j\), and let
\[
\mathbb{H}_j
:=
\{\boldsymbol{\alpha}\in\mathbb{Z}_+^d:\ |\boldsymbol{\alpha}|=j\}
=
\mathbb{P}_j\setminus\mathbb{P}_{j-1},
\]
with the convention \(\mathbb{P}_{-1}=\emptyset\). Their cardinalities are
\begin{equation}\label{eq:card}
p_j:=|\mathbb{P}_j|
=
{j+d\choose d},
\qquad
h_j:=|\mathbb{H}_j|
=
{j+d-1\choose d-1}
=
p_j-p_{j-1}.
\end{equation}

Given \(\boldsymbol{x}=(x_1,\ldots,x_d)\in\mathbb{R}^d\) and a multi-index
\(\boldsymbol{\alpha}\in\mathbb{Z}_+^d\), we use the standard notation
\[
\boldsymbol{x}^{\boldsymbol{\alpha}}
=
x_1^{\alpha_1}\cdots x_d^{\alpha_d},
\]
where the degree of the monomial \(\boldsymbol{x}^{\boldsymbol{\alpha}}\) is \(|\boldsymbol{\alpha}|\). Finally, we fix a graded monomial ordering \(\prec\) on
\(\mathbb{Z}_+^d\), such as the graded lexicographic or graded reverse
lexicographic order.

\begin{definition}[Multivariate Vandermonde matrix]
Let
\(
\mathcal{X}
=
\{\boldsymbol{x}_1,\ldots,\boldsymbol{x}_n\}
\subset \mathbb{R}^d
\)
be a finite set of nodes and let
\(
\mathcal{A}
=
\{\boldsymbol{\alpha}_1,\ldots,\boldsymbol{\alpha}_m\}
\subset\mathbb{Z}_+^d
\)
be ordered according to \(\prec\). The corresponding multivariate
Vandermonde matrix is
\begin{equation}\label{eq:multiVan}
V_{\mathcal{A}}(\mathcal{X})
=
\Big[
(\boldsymbol{x}_i)^{\boldsymbol{\alpha}_j}
\Big]_
{\substack{1\le i\le n\\1\le j\le m}}
\in\mathbb{R}^{n\times m}.
\end{equation}
\end{definition}

It is important to distinguish this matrix from the Fourier-type
Vandermonde matrix \(U_N\) introduced in \eqref{eq:multi-van}. While
\(V_{\mathcal{A}}(\mathcal{X})\) is built from monomials,
the entries of \(U_N\) are complex exponentials of the form
\(e^{i\langle \omega,x\rangle}\).

In addition, we write
\[
V_{\le k}:=V_{\mathbb{P}_k}(\mathcal{X}),
\qquad
V_k:=V_{\mathbb{H}_k}(\mathcal{X}).
\]
Since \(\mathbb{P}_k\) decomposes according to total degree,
\(V_{\le k}\in\mathbb{R}^{n\times p_k}\) admits the block representation
\begin{equation}\label{eq:van-deg-k}
V_{\le k}
=
\big[
V_0 \;\; V_1 \;\; \cdots \;\; V_k
\big],
\end{equation}
where \(V_j\in\mathbb{R}^{n\times h_j}\) contains all monomials of
degree exactly \(j\).

The above notation follows the framework developed in
\cite{usevich2021}. We next introduce the discrete moment order, a quantity that captures the geometry of the node set \(\mathcal{X}\).

\begin{definition}[Discrete moment order {\cite{schabak2005}}]
\label{def:mu-def}
Let \(\mathcal{X}\) be a set of \(n\) distinct points. The
\emph{discrete moment order} of \(\mathcal{X}\), denoted by
\(\mu(\mathcal{X})\), is the smallest integer \(m\) such that
\[
\operatorname{rank}\bigl(V_{\le m}(\mathcal{X})\bigr)=n.
\]
Equivalently,
\[
\ker\bigl(V_{\le m}(\mathcal{X})^T\bigr)=\{0\}.
\]
Moreover, \(\mu(\mathcal{X})\le n-1\) (for more details see \cite{schabak2005,GascaMariano2000,wathen2015spectral}). 
\end{definition}

As mentioned in the introduction, $\mu(\mathcal{X})$ is equivalently the smallest
degree at which polynomials interpolate $\mathcal{X}$: the least $m$ such that any
values assigned to the points of $\mathcal{X}$ are matched by some polynomial of
degree at most $m$. 

This measure depends both on the number of nodes and on the underlying algebraic
variety on which the nodes are located. For example, consider the two sets
$\mathcal{X}_p = \{(x_i,x_i^2)\}_{i=1}^6$ and
$\mathcal{X}_c = \{(x_i,x_i^3)\}_{i=1}^6$ in $\RR^2$, both with the same number of
nodes; their discrete moment orders are $\mu(\mathcal{X}_p)=3$ and
$\mu(\mathcal{X}_c)=2$. Indeed, restricting a polynomial of degree at most $m$ to
the parabola leaves exactly the univariate polynomials of degree at most $2m$, so
interpolating six nodes requires $2m+1\ge 6$; on the cubic, the monomials
$x^ay^b$ with $a+b\le 2$ restrict to the six distinct powers
$x^0,x^1,x^2,x^3,x^4,x^6$, which already suffice. Note that the nodes lying on the
curve of higher degree have the smaller moment order. In general, for a
fixed number $n$ of nodes in $\RR^d$, the discrete moment order is at least $k$,
where $k$ is the smallest integer such that ${d+k-1\choose d}< n \leq
{d+k\choose d}$, and at most $n-1$, which occurs when all the nodes lie on a
single line.
\par We conclude this section by recalling the main result of
\cite{diab2024spectral}, which characterizes the scaling of the eigenvalues of
\(U_N(\mathcal{X})^*U_N(\mathcal{X})\) when \(\mathcal{X}\) forms a
single cluster. For our purposes, we state the result in terms of the
singular values of \(U_N(\mathcal{X})\).

\begin{theorem}\label{thm:sr}
    For $\ci$ forming a $(h,\tau,n)$-cluster, let $m=\mu(\ci)$ be the discrete moment order of $\ci$.
    For every $\epsilon_0>0,\alpha\in(0,1)$ there exists $N_0$ such that for all $N\geq N_0$ and $\alpha\epsilon_0 < N\tau h < \epsilon_0$,    
    the eigenvalues of $U_N:=U_{N}(\ci)$ split into $m+1$ groups
\[
\sigma_{0,0}(U_N) \asymp N^{d\over 2}{(Nh)}^{0}, \quad \{\sigma_{1,j}(U_N)\}_{j=1}^{t_1}\asymp N^{d\over 2}{(Nh)}^1, ... ,\{\sigma_{m,j}(U_N)\}_{j=1}^{t_m}\asymp N^{d\over 2}{(Nh)}^{m},
\]
where $t_k:=\rank(V_{\leq k}(\ci))-\rank(V_{\leq k-1}(\ci))$, $k=1,\dots,m$. 
\end{theorem}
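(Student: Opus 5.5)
This is the main result of \cite{diab2024spectral}, recalled here in singular-value form. The plan is to reproduce its argument by factoring $U_N$ through a Taylor expansion around a reference node. Translating the cluster only multiplies $U_N$ on the left by a unitary diagonal matrix, so we may assume $x_1=0$ and $x_j=h\,y_j$ with $\tau\le\|y_j-y_k\|\le 1$. Expanding $e^{\imath\langle\omega,x_j\rangle}=\sum_{\boldsymbol\alpha}\frac{(\imath\omega)^{\boldsymbol\alpha}}{\boldsymbol\alpha!}x_j^{\boldsymbol\alpha}$ gives the factorization
\[
U_N = W_N\, D\, V(\ci)^T ,
\]
with the following factors:
\begin{itemize}
\item $W_N$ has columns $(\imath\omega/N)^{\boldsymbol\alpha}/\boldsymbol\alpha!$ for $\omega\in\Gg_N$;
\item $D=\diag\bigl((Nh)^{|\boldsymbol\alpha|}\bigr)$;
\item $V(\ci)$ is the infinite multivariate Vandermonde matrix of the rescaled nodes $y_j$.
\end{itemize}
Truncating at a degree $K\ge m$ leaves a tail whose norm is $O(N^{d/2}(Nh)^{K+1})$. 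This tail is negligible against the claimed smallest scale $(Nh)^m$ once $Nh<\epsilon_0$ is small.

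Next, the exponential factor $W_N$ is controlled by a Riemann-sum argument. $N^{-d}W_N^*W_N$ converges, as $N\to\infty$, to the Gram matrix of the monomials $t^{\boldsymbol\alpha}/\boldsymbol\alpha!$ in $L^2([-1,1]^d)$. That Gram matrix is positive definite on $\mathbb P_K$, since monomials are linearly independent. Hence for $N\ge N_0$ the truncated $N^{-d/2}W_N$ is uniformly well conditioned, and the singular values of $N^{-d/2}U_N$ are comparable, up to constants independent of $N,h$, to those of $D_{\le K}V_{\le K}(\ci)^T$.

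To get the graded structure, use the block decomposition \eqref{eq:van-deg-k} and perform a block ``QR'' of $V_{\le m}(\ci)$ along total degree. Choose a basis of $\mathbb C^n$ adapted to the filtration $\operatorname{range}V_{\le 0}\subset\operatorname{range}V_{\le 1}\subset\cdots\subset\operatorname{range}V_{\le m}=\mathbb C^n$, whose successive dimension jumps are exactly $t_k$ (with $t_0=1$). In this basis $D V^T$ becomes block upper-triangular, with $k$-th diagonal block of size $t_k$ scaled by $(Nh)^k$ times a matrix $R_{kk}$ of full rank $t_k$. Then the min--max (Courant--Fischer) characterization, applied to nested subspaces, yields upper and lower bounds $\asymp(Nh)^k$ for the $k$-th group of singular values. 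The off-diagonal blocks carry only higher powers of $Nh$, so they perturb the lower groups by a relatively small amount when $Nh$ is small.

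The main obstacle is uniformity of the constants. They depend on the conditioning of the blocks $R_{kk}$, which depends on the rescaled geometry $\{y_j\}$. That geometry ranges over a set that is compact once the separation $\tau$ is fixed, but not over a set on which $\mu$ and the $t_k$ are constant: degenerate limits, where points become cofactorial or lie on a variety, change the ranks. I would handle this by working for a fixed configuration, as the statement permits, since $N_0$ and the constants may depend on $\ci$. The lower limit $\alpha\epsilon_0<N\tau h$ keeps $Nh$ bounded below, so the perturbation argument only needs to separate scales at the fixed ratio $\epsilon_0$. This is exactly why the theorem is stated in that two-sided window.
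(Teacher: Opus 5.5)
The paper does not prove this statement; it quotes it from \cite{diab2024spectral}. There it is stated for the eigenvalues of the Gram matrix $U_N^*U_N$, whose entries are products of Dirichlet kernels in $x_k-x_j$. Judging by the notation borrowed from \cite{usevich2021}, it is obtained by a flat-limit analysis of that kernel matrix.

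Your argument is correct and works one level lower, on $U_N=W_N D V(\mathcal Y)^T$ itself. This places the whole dependence on $\epsilon=Nh$ exactly in $D$. The limit $N\to\infty$ then enters only through the $\epsilon$-independent factor $N^{-d/2}W_{N,\le m}$, whose Gram matrix converges to the positive definite moment matrix of $[-1,1]^d$. The degree-adapted block QR then writes $V_{\le m}(\mathcal Y)D$, after a unitary change of basis in $\CC^n$, as $\Sigma_\epsilon\tilde R_\epsilon$. Here $\Sigma_\epsilon=\diag(\epsilon^k I_{t_k})$, and $\tilde R_\epsilon$ tends to the block-diagonal matrix of the full-row-rank blocks $R_{kk}$, which gives the grouping immediately.

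Squaring your factorization gives $U_N^*U_N=V D\,W_N^*W_N\,D V^T$, which is exactly the Taylor expansion underlying the kernel approach, so the two routes are algebraically close. The kernel-matrix argument applies to general smooth kernels. Yours is shorter, self-contained, and slightly stronger: nothing in it uses the lower limit $\alpha\epsilon_0<N\tau h$, so you get the scaling for all $0<Nh\le\epsilon_1$ and $N\ge N_0$.

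For the same reason, your last paragraph misattributes the window. Separating the scales needs only $Nh$ small. A lower bound on $Nh$ is what one needs if one first replaces $N^{-d}U_N^*U_N$ by its continuous limit, since that approximation error is absolute and must be dominated by $(Nh)^{2m}$. This is presumably where the window comes from, and it is a step your factorization avoids.

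Two small points to tidy:
\begin{itemize}
\item Since the window forces $h\asymp 1/N$, ``fixed configuration'' must mean a fixed rescaled set $\mathcal Y$ with $\ci=x_1+h\mathcal Y$. You should note that $t_k(\ci)=t_k(\mathcal Y)$, because the space of polynomials of degree at most $k$ is invariant under translations and dilations; in places you write $V(\ci)$ where you mean $V(\mathcal Y)$.
\item Your argument needs $\epsilon_0/\tau$ below a threshold. For larger $\epsilon_0$ all $(Nh)^k$ are comparable to constants depending on $\epsilon_0,\alpha$, and you would add a compactness argument: the limit $\bigl[\int_{[-1,1]^d}e^{\imath\epsilon\langle t,y_k-y_j\rangle}\,dt\bigr]_{j,k}$ is positive definite and continuous in $\epsilon$ on the compact window.
\end{itemize}
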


\section{Main result and proof outline}\label{sec:proof-outline}
Let $\ci$ be a $((h^{(j)},\tau^{(j)},n^{(j)})_{j=1}^M),\eta)$-clustered configuration as in definition $\ref{def:multi_cluster}$. Our main approach to finding the scaling of the singular values of $U_N(\ci)$ is proving near-orthogonality between column spaces spanned by each cluster (for certain separation between clusters), as in \cite{batenkov2021}. In the latter, it was shown that for nodes in 1D, if the clusters are well-separated, then the singular values of $U_N(\ci)$ are a scaled union of all the singular values of each sub-matrix $U_N(\mathcal{C}_i)$. In this section, we  generalize this result to the high-dimensional setting.

\par The core idea is simple: if the clusters are well-separated in a $d$-dimensional space, then there exists a direction $w$, where the projected clusters onto a line with direction $w$, are still well-separated (see figure \ref{fig:good-proj} for an illustration).
 \begin{figure}
        \centering
        \includegraphics[width=0.7\linewidth]{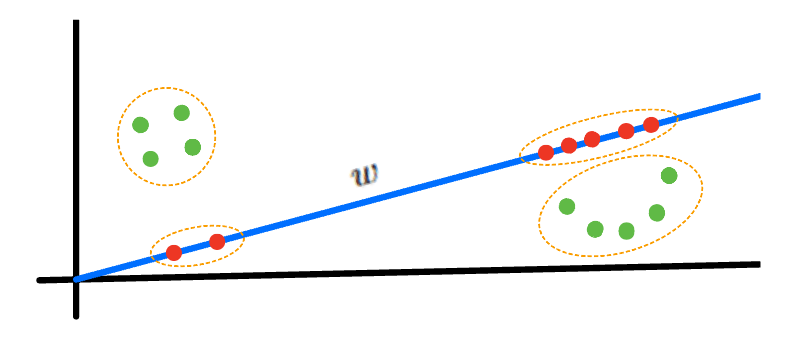}
        \caption{Projecting clusters into the blue line with direction $w$ preserves the well-separation between the clusters. In this case we say that $w$ is an admissible projection.}
        \label{fig:good-proj}
    \end{figure}
Now, since the projected clusters are well-separated, then the subspaces spanned by each projected cluster are near-orthogonal. Building on this, we prove that the subspaces spanned by each original cluster are also near-orthogonal. Combined with our results in \cite{diab2024spectral} on the scaling of the singular values of $U_N$ with nodes in a single cluster, we achieve our desired result.
\par For the reader's convenience, we give a detailed step by step outline of the proof:
\begin{enumerate}
    \item For each pair of clusters, find $w \in \Gg_N\setminus \{\boldsymbol{0}\}$ (independent of $N$) such that the projected clusters onto the line with direction $w$ are well separated (see Proposition \ref{prop:proj} in Section \ref{sec:ex-good-proj}). We refer to such direction as an admissible projection. The geometry of nodes within each cluster under this projection is of no importance (see remark \ref{rem:inner-cluster-proj}).
    \item For a cluster $\mathcal{C}$ and $K_2\asymp N$, let $V_{w}(\mathcal{C}) := \big[e^{\imath k\langle w, x\rangle}\big]_{k=1,...,K_2}^{x\in \mathcal{C}}$ be the univariate Vandermomde matrix of the projected nodes corresponding to an admissible projection $w$.
    In Theorem \ref{thm:van-part} of Section \ref{sec:van-row-par}, we find shifts $\{s_i\}_{i=1}^{K_1} \subset \Gg_N$ such that the matrix $U_N(\mathcal{C})$ can be rearranged (up to a permutation) as follows:  
    
    \[
        U_N(\mathcal{C}) = \begin{bmatrix}
            V_w(\mathcal{C}) \\
            V_w(\mathcal{C})D_1(\mathcal{C}) \\
            \vdots \\
            V_w(\mathcal{C})D_{K_1}(\mathcal{C}) \\
            R(\mathcal{C})
        \end{bmatrix}
    \]
    where $D_k(\mathcal{C}) := diag(e^{\imath \langle s_k, x\rangle})_{x\in \mathcal{C}}$ and $K_1 \asymp N^{d-1}$.
    
    \item For any two clusters $\mathcal{C}_j,\mathcal{C}_k$ in $\ci$, we prove in Theorem \ref{thm:orth} of Section \ref{sec:cl-sub-orth} that if the subspaces that are spanned by $V_w(\mathcal{C}_k)$ and $V_w(\mathcal{C}_j)$ are near-orthogonal (which they are by 1) above) , then the subspaces that are spanned by
    \[
        \begin{bmatrix}
            V_w(\mathcal{C}_k) \\
            V_w(\mathcal{C}_k)D_1(\mathcal{C}_k) \\
            \vdots \\
            V_w(\mathcal{C}_k)D_{K}(\mathcal{C}_k) \\
            R(\mathcal{C}_k)
        \end{bmatrix}, \quad \textbf{and} \qquad
        \begin{bmatrix}
            V_w(\mathcal{C}_j) \\
            V_w(\mathcal{C}_j)D_1(\mathcal{C}_j) \\
            \vdots \\
            V_w(\mathcal{C}_j)D_{K}(\mathcal{C}_j) \\
            R(\mathcal{C}_j)
        \end{bmatrix}
    \]
    are also near-orthogonal.
    \item Finally, using Theorem 5.1 in \cite{batenkov2021}, the singular values of $U_N(\ci)$ are a scaled union of the singular values of each $U_N(\mathcal{C}_j)$ which are found in \prettyref{thm:sr}. 
\end{enumerate}

We state the main result of this paper.
\begin{theorem}[Entire Spectrum]\label{thm:entire-spec}
    Let $\ci$ form an $((h^{(i)},\tau^{(i)},n^{(i)})_{i=1}^M,\eta)$-clustered configuration where ${\pi \over a} \leq \eta \leq 2\pi$ for $a=\left\lfloor {\sqrt[d-1]{4(\sqrt{d}/2+1)+1}\over 2} \right\rfloor + 1$. Furthermore, set $\tau = \min_{i}\{\tau^{(i)}\}$ and suppose that $h^{(1)}=\dots=h^{(M)}$. For each cluster $\mathcal{C}_i$ let $m_i = \mu(\mathcal{C}_i)$ be its discrete moment order. Define
    \[
        \ell_j := \sum_{i=1}^M t_j^{(i)}, \ j=0,...,\max_i \{m_i\},
    \] 
    where $t_j^{(i)}:=\rank(V_{\leq j}(\mathcal{C}_i))-\rank(V_{\leq j-1}(\mathcal{C}_i))$.
    Then, for constants $C_6,C_7$ and for every $ 0 < \epsilon_0 < C_7, \alpha \in (0,1)$ there exists $N_0$ such that for all $N \geq N_0$ and $\max\{{\alpha\epsilon_0 \over \tau h}, {C_6}\} < N < {\epsilon_0 \over \tau h}$, there are precisely $\ell_j$ singular values of $U_N(\ci)$ scaling like $N^{d\over 2}(Nh)^{j}$.
\end{theorem}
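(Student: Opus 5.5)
The plan follows the four-step outline of Section~\ref{sec:proof-outline}. The target is that the column spaces $\mathcal{U}_i:=\operatorname{span}U_N(\mathcal{C}_i)$ are pairwise nearly orthogonal. Once that holds, the spectrum of $U_N(\ci)$ is, up to constants, the union of the spectra of the $U_N(\mathcal{C}_i)$, and Theorem~\ref{thm:sr} supplies those spectra cluster by cluster.

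\textbf{Step 1 (admissible direction).} For each pair of clusters $\mathcal{C}_j,\mathcal{C}_k$, I use Proposition~\ref{prop:proj} to pick a nonzero integer vector $w$ with $\|w\|$ bounded independently of $N$. It should satisfy $|\langle w,x-y\rangle|_{\TT}\ge c\eta$ for $x\in\mathcal{C}_j$, $y\in\mathcal{C}_k$. The hypothesis $\eta\ge\pi/a$, with $a$ as in the statement, is exactly what the counting/pigeonhole argument over directions in a box of side about $a$ needs to guarantee such a $w$. The projected nodes $\langle w,x\rangle$ of each cluster then form a 1D cluster of width at most $\|w\|_1h$, and the two projected clusters are $c\eta$-separated.

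\textbf{Step 2 (row partition) and Step 3 (orthogonality).} Next I apply Theorem~\ref{thm:van-part} to choose shifts $s_1,\dots,s_{K_1}$, with $K_1\asymp N^{d-1}$, such that the disjoint lines $\{s_i+kw\}_{k=1}^{K_2}$, $K_2\asymp N$, lie in $\Gg_N$. The leftover rows $R$ should be a fraction $O(1/N)$ of all rows. Each block $V_w(\mathcal{C})D_i(\mathcal{C})$ has the same column space geometry as $V_w(\mathcal{C})$.

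For the angle bound, take unit vectors $u=U_N(\mathcal{C}_j)a$ and $v=U_N(\mathcal{C}_k)b$. Then
\[
\langle u,v\rangle=\sum_i \langle V_w(\mathcal{C}_j)D_i a,\,V_w(\mathcal{C}_k)D_i b\rangle+\langle R_ja,R_kb\rangle .
\]
The 1D near-orthogonality result of \cite{batenkov2021} applies to the separated projected clusters and bounds each summand by $\epsilon_1\|V_wD_ia\|\|V_wD_ib\|$ with $\epsilon_1=O(1/N)+O(Nh)$. Cauchy--Schwarz then bounds the sum by $\epsilon_1$ times the product of the full-block norms.

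The key point is that $\|u\|$ must not be dominated by $R$. I would control this by comparing norms: the energy of $u$ on the rows $R$ is at most a $O(1/\sqrt N)$ fraction. To show this I would use the single-cluster structure, namely that rows of $U_N(\mathcal{C})$ vary slowly over the grid since $Nh\ll1$, so the row energy is roughly equidistributed. This produces the $C_3/\sqrt N$ term. The outcome is
\[
\cos\theta_{\min}\le \frac{C_1}{N}+C_2Nh+\frac{C_3}{\sqrt N}.
\]

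\textbf{Step 4 (spectrum).} I then invoke Theorem~5.1 of \cite{batenkov2021}. If the pairwise principal angles between the $\mathcal{U}_i$ are bounded away from zero uniformly, which holds once $N>C_6$ and $\epsilon_0<C_7$ make the cosine above at most, say, $1/(2M)$, then every singular value of $U_N(\ci)$ lies within constant factors of the ordered union of the singular values of the $U_N(\mathcal{C}_i)$.

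Since all $h^{(i)}=h$ and $\alpha\epsilon_0<N\tau h<\epsilon_0$, Theorem~\ref{thm:sr} applies to each cluster with the same $N_0$, taking the maximum over the finitely many clusters. Cluster $\mathcal{C}_i$ contributes $t^{(i)}_j$ singular values of order $N^{d/2}(Nh)^j$. The scales for different $j$ differ by factors $(Nh)^{\pm1}$, which are bounded away from constants because $N\tau h<\epsilon_0<C_7$ is small. The constant-factor perturbation therefore cannot move values between groups, and summing over clusters gives exactly $\ell_j$ singular values at each scale.

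\textbf{Main obstacle.} I expect the difficulty to be in Step~3, specifically controlling the remainder rows $R$ and making the admissible direction uniform in $N$. The case of the norm bound that worries me most is when $a$ concentrates the energy of $U_N(\mathcal{C}_j)a$ in high-order cancellations. There I would fall back on the explicit Taylor/moment decomposition from \cite{diab2024spectral} to show that the energy is still spread across the grid.
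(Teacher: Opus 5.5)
Steps 1, 2 and 4 match the paper. Your handling of the lattice-ray blocks is sound: you keep the coefficient vector $D_ia$ in each block, apply the 1D bound of \cite{batenkov2021} blockwise, then use Cauchy--Schwarz over $i$. The gap is the residual term in Step~3. The partition of Theorem~\ref{thm:van-part} uses only rows inside the rotated cube $\qq_w$, whose side is $2(N-\|w\|)/\sqrt d$. It therefore covers roughly a $d^{-d/2}$ fraction of $\Gg_N$, which is at most half when $d\ge 2$, and truncating to the common length $K_2$ removes more. So $R$ is a constant fraction of the rows, not $O(1/N)$. Your equidistribution heuristic then works against you. For $a=e_1$ every entry of $u$ has modulus one, so $\|R_ja\|^2/\|u\|^2=|R|/(2N+1)^d\approx 1-d^{-d/2}$. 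The energy of $u$ on $R$ is thus \emph{not} an $O(1/\sqrt N)$ fraction. Bounding $\langle R_ja,R_kb\rangle$ by $\|R_ja\|\,\|R_kb\|$ gives an $O(1)$ contribution to the cosine, not $C_3/\sqrt N$; with this partition the claim you planned to prove is false.

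The missing idea is to use cancellation in the residual cross-term rather than smallness of the residual energy; this is Lemma~\ref{lem:trig_cancel}. Let $S=\Gg_N\setminus\Gg(\qq_w)$ and $\phi=x-\tilde x$ with $x\in\mathcal C_j$, $\tilde x\in\mathcal C_k$.

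\begin{itemize}
\item The product $(1-e^{\imath\langle w,\phi\rangle})\sum_{s\in S}e^{\imath\langle s,\phi\rangle}$ telescopes to a sum over $S\triangle(S+w)$. That set has $O(N^{d-1})$ points because $\|w\|$ is bounded.
\item Admissibility of $w$ bounds $|1-e^{\imath\langle w,\phi\rangle}|$ from below.
\item Hence the cross-Gram entries of the residual rows are $O(N^{d-1})$, not $O(N^{d})$, and $|\langle R_ja,R_kb\rangle|\lesssim N^{d-1}\|a\|\|b\|$.
\end{itemize}

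To turn this into a relative bound you still need $\|U_N(\mathcal C_j)a\|$ bounded below in terms of $\|a\|$. Here your worry about near-null $a$ is well founded. An entrywise lower bound on Gram entries such as Lemma~\ref{lem:cluster-inner-prod} does not bound $\|V_w(\mathcal C_j)a\|^2$ from below when the nodes are clustered. In the regime of the theorem ($N\tau h>\alpha\epsilon_0$) you can close this with Theorem~\ref{thm:sr}, which gives $\|U_N(\mathcal C_j)a\|\gtrsim N^{d/2}(Nh)^{m_j}\|a\|$. The residual then contributes $O\bigl(N^{-1}(\alpha\epsilon_0)^{-(m_j+m_k)}\bigr)$ to the cosine. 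That is below the threshold required in Step~4 once $N\ge N_0(\epsilon_0,\alpha)$.
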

\begin{proof}
    Using Theorem \ref{thm:sr} for each cluster $\mathcal{C}_i$, we have that for every $\epsilon_0 > 0, \alpha \in (0, 1)$ there exists $N_i$ such that for all $N \geq N_i$ and $\alpha\epsilon_0 < N(\tau^{(i)}h) < \epsilon_0$, the singular values of $U_i := {1\over \sqrt{(2N+1)^d}}U_N(\mathcal{C}_i)$ split into $m + 1$ groups
    \[
\sigma_{0,0}(U_i) \asymp {(Nh)}^{0}, \quad \{\sigma_{1,j}(U_i)\}_{j=1}^{t_1^{(i)}}\asymp{(Nh)}^1, ... ,\{\sigma_{m,j}(U_i)\}_{j=1}^{t_m^{(i)}}\asymp{(Nh)}^{m},
\]
where $t_k^{(i)}:=\rank(V_{\leq k}(\mathcal{C}_i))-\rank(V_{\leq k-1}(\mathcal{C}_i))$, $k=1,\dots,m_i$. Set $N_0 := \max\{N_i\}_{i=1}^M$. The cluster $\mathcal{C}_i$ is also a $(h^{(i)},\tau,n^{(i)})$ cluster. Thus we have 
\begin{align*}
    &\alpha\epsilon_0 \leq N\tau h \leq \epsilon_0 \ \iff \ {\alpha\epsilon_0 \over \tau h} < N < {\epsilon_0 \over \tau h}.
\end{align*}

Combining the above with Theorem \ref{thm:multi-sv}, we get the desired result.
\end{proof}
\begin{remark}
    The condition $Nh > {\alpha\epsilon_0\over \tau}$ for $\alpha \in (0, 1)$ is imposed by Theorem \ref{thm:sr} only.
\end{remark}

\section{Existence of admissible projections}\label{sec:ex-good-proj}
We denote the center of each cluster $\mathcal{C}_j$ by $c_j:={1\over n^{(j)}}\sum_{x\in \mathcal{C}_j}x$ for $j=1,...,M$.

\begin{definition}
    For any polyhedron $T \subset \RR^d$, we denote by $|T|$ the volume of $T$.
\end{definition}
\begin{definition}[Slab]
Let $u \in \mathbb{R}^d$ be a unit vector, $c \in \mathbb{R}$, and $h>0$. 
The slab with normal direction $u$, center $c$, and half-width $h$ is defined by
\[
S(u,c,h)
:=
\left\{ x \in \mathbb{R}^d : 
\big| \langle x,u\rangle - c \big| \le h
\right\}.
\]
Equivalently, it is the region between the two parallel hyperplanes 
$\langle x,u\rangle = c \pm h$, and its width equals $2h$.
\end{definition}
{In the following Theorem, we prove that for any two clusters with centers $c_j,c_k$,  the measure of "bad" projections is bounded.
\begin{theorem}\label{thm:slab}
Let $c_j,c_k\in\mathbb{R}^d$ and define $y:=c_j-c_k$ with 
$\theta:=\|y\|>0$. Fix $\alpha\in[0,\pi]$ and $a>0$, and let 
$\qq_a=[-a,a]^d$. Define
\[
\Lambda_{j,k}^{\alpha}(\qq_a)
:=\left\{w\in \qq_a :\|\langle w,c_j-c_k\rangle\|_{\mathbb{T}}\le\alpha\right\}.
\]
Then $\Lambda_{j,k}^{\alpha}(I)$ is a union of $L$ parallel slabs $I_{\ell}$
orthogonal to $y$, where
\[
L \le \left\lfloor \frac{a\theta\sqrt d}{\pi} \right\rfloor + 1.
\]
Moreover, each slab has volume bounded by
\[
|I_\ell|
\le
\frac{2\sqrt{2}\alpha}{\theta}\,(2a)^{d-1},
\qquad \ell=1,\dots,L,
\]
and consequently
\[
\big|\Lambda_{j,k}^{\alpha}(\qq_a)\big|
\le
L \cdot
\frac{2\sqrt{2}\alpha}{\theta}\,(2a)^{d-1}.
\]
\end{theorem}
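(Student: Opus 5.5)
The plan is to reduce everything to the one-dimensional variable $t:=\langle w,y\rangle$. The set $\Lambda_{j,k}^{\alpha}(\qq_a)$ is the preimage, under the linear functional $w\mapsto\langle w,y\rangle$ restricted to $\qq_a$, of the set
\[
E_\alpha:=\{t\in\RR:\ \|t\|_{\TT}\le\alpha\}=\bigcup_{r\in\ZZ}[2\pi r-\alpha,\,2\pi r+\alpha].
\]
The preimage of each interval $[2\pi r-\alpha,2\pi r+\alpha]$ is exactly the slab $S\big(y/\theta,\,2\pi r/\theta,\,\alpha/\theta\big)$, which is orthogonal to $y$ and has width $2\alpha/\theta$. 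Hence $\Lambda_{j,k}^{\alpha}(\qq_a)=\bigcup_r \big(S(y/\theta,2\pi r/\theta,\alpha/\theta)\cap\qq_a\big)$. If $\alpha\ge\pi$, then $E_\alpha=\RR$, and this case should be handled either by allowing the slabs to overlap or by noting that the bounds still hold, with the single piece being the whole cube. I would set $I_\ell$ to be the nonempty intersections.

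\textbf{Counting $L$.} For $w\in\qq_a$, Cauchy--Schwarz gives $|\langle w,y\rangle|\le\|w\|\,\theta\le a\sqrt d\,\theta$. So only indices $r$ with $|2\pi r|\le a\sqrt d\,\theta+\alpha$ can contribute, and the number of such $r$ has to be bounded. A naive count of the integers in $[-(a\sqrt d\theta+\alpha)/2\pi,\,(a\sqrt d\theta+\alpha)/2\pi]$ gives roughly $2\lfloor a\theta\sqrt d/2\pi\rfloor+1$, plus possible boundary terms coming from $\alpha$. This should be compared with the claimed bound $\lfloor a\theta\sqrt d/\pi\rfloor+1$. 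The key observation is that the range of $t$ over $\qq_a$ is a symmetric interval of length at most $2a\sqrt d\,\theta$, while consecutive centers $2\pi r$ are spaced $2\pi$ apart. An interval of length $\lambda$ contains at most $\lfloor\lambda/2\pi\rfloor+1$ points of $2\pi\ZZ$, which gives exactly $\lfloor a\theta\sqrt d/\pi\rfloor+1$ if only slabs whose centers lie in the range are counted. The \textbf{main obstacle} is the slabs whose centers lie just outside the range but which still clip the cube within a margin of $\alpha$. To handle them, I would try to merge such a boundary piece with the contribution it shares through symmetry. Failing that, I would use that the interval $[-\lambda/2,\lambda/2]$ is symmetric about the center $0\in2\pi\ZZ$, so the count becomes $2\lfloor\lambda/4\pi\rfloor+1\le\lfloor\lambda/2\pi\rfloor+1$. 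This leaves some room, which I would use to absorb the boundary pieces when $\alpha\le\pi$, possibly with a small case check.

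\textbf{Volume of each slab.} Each $I_\ell$ is the intersection of $\qq_a$ with a slab of width $2\alpha/\theta$ normal to $u=y/\theta$. To bound its volume, I would pick the coordinate $i$ maximizing $|u_i|$, so that $|u_i|\ge1/\sqrt d$. Then I would write the piece as a union of fibers along $e_i$. For fixed values of the other coordinates in $[-a,a]^{d-1}$, the admissible $w_i$ form an interval of length at most $2\alpha/(\theta|u_i|)$. Fubini then gives a volume of at most $(2a)^{d-1}\cdot2\alpha/(\theta|u_i|)$.

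This only yields a constant of $2\sqrt d$, whereas the claim has $2\sqrt2$. To reach $\sqrt2$, I would instead use the known sharp bound for hyperplane sections of the cube (Ball's theorem), which states that every central or noncentral $(d-1)$-dimensional section of the unit cube has volume at most $\sqrt2$. Rescaled to $\qq_a$, this gives sections of volume at most $\sqrt2\,(2a)^{d-1}$. Integrating these cross-sections across the slab width $2\alpha/\theta$ (coarea in the direction $u$) gives $|I_\ell|\le 2\sqrt2\,\alpha(2a)^{d-1}/\theta$. The final estimate on $|\Lambda_{j,k}^{\alpha}(\qq_a)|$ then follows by summing over the $L$ slabs.
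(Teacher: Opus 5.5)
Your overall route is the paper's. You pull back $E_\alpha$ under $w\mapsto\langle w,y\rangle$, bound the range by Cauchy--Schwarz, and integrate Ball's section bound across the slab width. Your per-slab volume bound is fine; for non-central sections, add Brunn's principle, which says parallel sections of the symmetric cube are largest through the centre.

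The gap is the counting step you yourself flagged, and it cannot be closed, because the bound $L\le\lfloor a\theta\sqrt d/\pi\rfloor+1$ is false in general. Take $d=1$, or $d=2$ with $y$ parallel to $(1,1)$, so that the range of $\langle w,y\rangle$ over $\qq_a$ is all of $[-a\theta\sqrt d,a\theta\sqrt d]$. Choose $a\theta\sqrt d=1.8\pi$ and $\alpha=\pi/2$. The bands $|t-2\pi r|\le\pi/2$ with $r=0,\pm1$ all meet $[-1.8\pi,1.8\pi]$, so $\Lambda_{j,k}^{\alpha}(\qq_a)$ has three connected components. A slab is convex and therefore lies in a single component, so $L\ge 3>\lfloor 1.8\rfloor+1=2$.

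Your ``room'' does not help. With $X=a\theta\sqrt d/\pi$, the slack between $2\lfloor X/2\rfloor+1$ and $\lfloor X\rfloor+1$ is $0$ if $\lfloor X\rfloor$ is even and $1$ if it is odd. Boundary pieces, however, come in symmetric pairs. So for such $y$ you get $\lfloor X\rfloor+2$ pieces whenever $\lfloor X\rfloor$ is odd and $X+\alpha/\pi\ge\lfloor X\rfloor+1$. This happens for every $\alpha>0$ with $X\in[2-\alpha/\pi,2)$, so it is not an artifact of large $\alpha$. The paper's proof has the same defect: it counts only slabs whose centres $2\pi m/\theta$ lie in the range of $\langle w,u\rangle$, ignoring those centred within $\alpha/\theta$ outside it.

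Two things survive, and they are what Theorem~\ref{thm:union-bound} actually needs.
\begin{enumerate}
\item[(i)] The corrected count $L\le\lfloor(a\theta\sqrt d+\alpha)/\pi\rfloor+1\le\lfloor a\theta\sqrt d/\pi\rfloor+2$.
\item[(ii)] The total bound $|\Lambda_{j,k}^{\alpha}(\qq_a)|\le(\lfloor a\theta\sqrt d/\pi\rfloor+1)\frac{2\sqrt2\alpha}{\theta}(2a)^{d-1}$. This must be proved by periodicity rather than as ``number of slabs times maximal slab volume''.
\end{enumerate}
For (ii), split the range of $t=\langle w,y\rangle$, an interval of length at most $2a\theta\sqrt d$, into at most $\lceil a\theta\sqrt d/\pi\rceil\le\lfloor a\theta\sqrt d/\pi\rfloor+1$ consecutive intervals of length at most $2\pi$. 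Each of these meets $E_\alpha$ in measure at most $2\alpha$. Hence the set of admissible $s=\langle w,u\rangle$ with $\theta s\in E_\alpha$ has measure at most $(\lfloor a\theta\sqrt d/\pi\rfloor+1)\,2\alpha/\theta$. Integrating the section bound $\sqrt2(2a)^{d-1}$ over this set gives (ii).

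With (i) in place of the stated $L$, the polyhedron count $D$ in Theorem~\ref{thm:union-bound} grows by at most $\binom{M}{2}$. That only changes constants downstream.
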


\begin{proof}
Let $u:=y/\theta$ be the unit vector in the direction of $y$.
For $w\in\mathbb{R}^d$, let $s=\langle w,u\rangle$ then $\langle w,y\rangle=\theta s$.

By the definition of $\|\cdot\|_{\TT^d}$, there exists $m\in \mathbb Z$ such that
\[
\|\langle w,y\rangle\|_{\mathbb T}\le\alpha
\quad\Longleftrightarrow\quad
|\theta s-2\pi m|\le\alpha,
\]
which is equivalent to
\[
\left| s-\frac{2\pi m}{\theta} \right|
\le
\frac{\alpha}{\theta}.
\]
For each $m\in\mathbb Z$, this describes a slab orthogonal to $y$
of width $2\alpha/\theta$. Hence
\[
\Lambda_{j,k}^{\alpha}(\qq_a)
=
\bigcup_{m\in \ZZ} \left( \qq_a \cap S_m \right),
\]
where $S_m:= S(u,\frac{2\pi m}{\theta},2\alpha/\theta)$.

\par Now we count the number of slabs $S_m$ intersecting $\qq_a$.
As $w$ varies over $\qq_a=[-a,a]^d$, the scalar 
$s=\langle w,u\rangle$ ranges over an interval $I:=\{\langle w,u\rangle : \omega \in \qq_a\}$ of length at most
\[
|I| \leq \sup_{w\in \qq_a}\langle w,u\rangle
-
\inf_{ w \in \qq_a}\langle w,u\rangle
\le 2a\sqrt d,
\]
by Cauchy-Schwarz. Since consecutive slab centers are separated by
$2\pi/\theta$, the number of slabs intersecting $\qq_a$ is
\[
L
\leq
\left\lfloor
\frac{|I|}{2\pi/\theta}
\right\rfloor + 1
\le
\left\lfloor
\frac{2a\sqrt d}{2\pi/\theta}
\right\rfloor + 1
=
\left\lfloor
\frac{a\theta\sqrt d}{\pi}
\right\rfloor + 1.
\]

Finally, each intersection $I\cap S_m$ is a slice of the cube of width
$2\alpha/\theta$ in direction $u$. By Ball's cube slicing theorem \cite{ball1986cube},
every $(d-1)$-dimensional section of $[-a,a]^d$ has volume at most
$\sqrt{2}\,(2a)^{d-1}$. Therefore,
\[
|\qq_a\cap S_m|
\le
\frac{2\alpha}{\theta}\,
\sqrt{2}\,(2a)^{d-1}.
\]
Summing over all $m$ yields the desired bound.
\end{proof}

Now, using Theorem \ref{thm:slab}, we prove that for $M$ well-separated cluster centers $\{c_j\}_{j=1}^M$, there exists a "good" projection $w$ such that the projected centers remain well-separated.
\begin{theorem}\label{thm:union-bound}
    Fix $a >0$. Let $\mathcal{C} = \{c_j\}_{j=1}^M$ such that $\theta = \min_{j\neq k}\|c_j - c_k\|$ and ${\pi \over a} \leq \theta \leq 2\pi$. Then the $d$-cube $\qq_a = [-a,a]^d $ of volume $|\qq_a| = \big({2a}\big)^d$ contains a polyhedron $T \subset \qq_a$ of volume $|T| \geq {4\over M^2(\sqrt{d}+2)+2}(2a)^{d-1}$ such that for each $w \in T$ we have that for all $c_{k} \in \mathcal{C}$ and $c_j \neq c_k$:
    \begin{align*}
        \| \langle w,c_j\rangle - \langle w,c_k\rangle\|_{\mathbb{T}} \geq {1\over 2}{\sqrt{2}\pi \over (\sqrt{d}+1)M^2}.
    \end{align*}
    
\end{theorem}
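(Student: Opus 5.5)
The plan is a union bound over all pairs using Theorem \ref{thm:slab}. Set
\[
\alpha := \frac{1}{2}\frac{\sqrt{2}\pi}{(\sqrt d+1)M^2}.
\]
The bad set is
\[
B:=\bigcup_{j<k}\Lambda^{\alpha}_{j,k}(\qq_a),
\]
the set of $w\in\qq_a$ for which some pair of projected centers is closer than $\alpha$ on $\TT$. We will take $T:=\qq_a\setminus B$, up to closure, after removing the open strict inequality region. Every $w\in T$ then satisfies the required separation for all pairs, and $T$ is a polyhedron: $B$ is a finite union of slabs intersected with the cube, and removing them from the cube leaves a finite union of convex polytopes.

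For the volume, fix a pair $j\neq k$ with $\theta_{jk}:=\|c_j-c_k\|$. We use $\theta\le\theta_{jk}$ and assume $\theta_{jk}\le 2\pi\sqrt d$, which is the natural range since the centers lie in $\TT^d$ and the statement's assumption $\theta\le 2\pi$ is meant in this sense. Theorem \ref{thm:slab} then gives
\[
|\Lambda^\alpha_{j,k}(\qq_a)|\le\Big(\Big\lfloor\frac{a\theta_{jk}\sqrt d}{\pi}\Big\rfloor+1\Big)\frac{2\sqrt2\alpha}{\theta_{jk}}(2a)^{d-1}\le\Big(\frac{a\sqrt d}{\pi}+\frac1{\theta_{jk}}\Big)2\sqrt2\alpha(2a)^{d-1}.
\]
The condition $\theta\ge\pi/a$ gives $1/\theta_{jk}\le a/\pi$, so each pair contributes at most
\[
\frac{2\sqrt2\,a(\sqrt d+1)\alpha}{\pi}(2a)^{d-1}.
\]
Summing over the $\binom M2\le M^2/2$ pairs and substituting $\alpha$ gives
\[
|B|\le \frac{\sqrt2\,a M^2(\sqrt d+1)\alpha}{\pi}(2a)^{d-1}=a\,(2a)^{d-1}=\tfrac12(2a)^d.
\]
Therefore $|T|\ge \tfrac12(2a)^d$, which is positive.

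It remains to compare $\tfrac12(2a)^d$ with the stated bound $\frac{4}{M^2(\sqrt d+2)+2}(2a)^{d-1}$. We need $a\ge 4/(M^2(\sqrt d+2)+2)$. This is where I expect the main obstacle. For large $M$ the inequality holds comfortably, and $\pi/a\le\theta\le2\pi$ gives $a\ge1/2$. For small $M$, say $M=2$ with $d=1$, the required bound is about $0.29$, so $a\ge\frac12$ suffices. In general $M\ge2$ gives $M^2(\sqrt d+2)+2\ge 14$, and $4/14<1/2\le a$, so the inequality holds.

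If the floor term must be kept more carefully, for instance when $\theta_{jk}$ is large, the fallback is to bound $\lfloor x\rfloor+1\le x+1$ with $x=a\theta_{jk}\sqrt d/\pi$. This gives
\[
|\Lambda^\alpha_{j,k}|\le 2\sqrt2\alpha(2a)^{d-1}\Big(\frac{a\sqrt d}{\pi}+\frac{a}{\pi}\Big).
\]
This bound does not depend on $\theta_{jk}$ at all, so the argument goes through without any upper bound on $\theta_{jk}$. The remaining work is constant bookkeeping to land exactly on the stated constant. The stated constant seems to be a deliberately weak version of what the union bound yields, so I would present the sharper bound $|T|\ge\frac12(2a)^d$ and deduce the claim from it.
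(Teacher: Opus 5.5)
Your volume estimate is the same as the paper's. Both apply Theorem~\ref{thm:slab} pair by pair, use $\theta\ge\pi/a$ to absorb the $1/\theta_{jk}$ term, and conclude that the bad set has measure at most $\tfrac12(2a)^d$. Your per-pair bookkeeping with $\theta_{jk}$ is cleaner than the paper's, which inserts the minimal $\theta$ into the slab count $L$. Your provisional assumption $\theta_{jk}\le 2\pi\sqrt d$ is never used: $\lfloor x\rfloor+1\le x+1$ already removes $\theta_{jk}$, as your last paragraph observes.

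The two routes split at the end.
\begin{itemize}
\item The paper asserts that the good set is a union of at most $D=K+1$ polyhedra and picks one of at least average volume. That average is the source of the constant $\frac{4}{M^2(\sqrt d+2)+2}(2a)^{d-1}$, so the paper intends $T$ to be a single cell.
\item You take $T$ to be the closure of the whole good set, which is a finite union of convex polytopes. You get $|T|\ge\tfrac12(2a)^d$ and check that $a\ge\tfrac12>\tfrac{4}{14}\ge\frac{4}{M^2(\sqrt d+2)+2}$ for $M\ge2$.
\end{itemize}
Your argument is correct. Passing to the closure is harmless because $w\mapsto\|\langle w,c_j-c_k\rangle\|_{\TT}$ is continuous.

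Your route also avoids a step of the paper that fails in general. The bound $D\le K+1$ holds when all slabs are parallel, for example for a single pair or for $d=1$. For $M\ge3$ and $d\ge2$, slabs from different pairs generally cross, and the single-cell version of the statement can then be false.

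For instance, take $d=2$, $c_1=(0,0)$, $c_2=(3,0)$, $c_3=(0,3)$ (so $\theta=3$), and $a=20$. The pair $(1,2)$ forbids $w_1\in\frac{2\pi}{3}\ZZ+[-\frac{\alpha}{3},\frac{\alpha}{3}]$, and the pair $(1,3)$ forbids the same set for $w_2$. Hence every connected component of the good set lies in a square of side less than $2\pi/3$, so its area is below $4.39$. The claimed bound, however, is $\frac{160}{9(\sqrt2+2)+2}\approx 4.89$.

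So the statement holds only under your reading of ``polyhedron'' as a possibly disconnected finite union of polytopes. You should say explicitly that this is the version you prove. With that, your argument is complete and gives a much stronger constant than stated.
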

\begin{proof}
    Consider the set 
    \begin{equation}
        \Lambda^{\alpha}(\qq_a) = \big\{w \in \qq_a : \exists c_k \in \mathcal{C} \ s.t. \ \min_{1\leq j \leq M}\|\langle w,c_j\rangle - \langle w,c_k\rangle\|_{\mathbb{T}} \leq \alpha\big\}
    \end{equation}
    We then have 
    \[
        \Lambda^{\alpha}(\qq_a) = \bigcup_{c_k \in \mathcal{C}}\bigcup_{c_k \neq c_j}\Lambda_{k,j}^{\alpha}(\qq_a)
    \]
 
   Using Theorem \ref{thm:slab}, we have that $\Lambda^{\alpha}(\qq_a)$ is a union of at most
   $$K = {M \choose 2}\cdot L ={ M(M-1)\over 2}\Big(\Big\lfloor\frac{a\theta\sqrt d}{\pi}\Big\rfloor + 1\Big)$$ slabs.
    \par Let $\nu$ denote the Lebesgue measure on $\mathbb{R}^d$, using that $\theta \geq {\pi \over a}$, we have
    \[
        \nu(\Lambda^{\alpha}(\qq_a)) \leq K\frac{2\sqrt{2}\alpha}{\theta}\,(2a)^{d-1} \leq \alpha\big({\sqrt{d}+1 \over \sqrt{2}\pi}M^2(2a)^d\big)
    \]
    Put $\alpha = {1\over 2}{\sqrt{2}\pi \over (\sqrt{d}+1)M^2}$, we get $\nu(\Lambda^{\alpha}(C)) \leq {1\over 2}\big({2a}\big)^{d}$.
    Now consider the complement set
    \begin{equation}
        \big(\Lambda^{\alpha}(\qq_a)\big)^c = \big\{w \in \qq_a : \forall c_k \in \mathcal{C} , \ \min_{1\leq j \leq M}\|\langle w,c_j\rangle - \langle w,c_k\rangle\|_{\mathbb{T}} > \alpha\big\}
    \end{equation}
    We have $\nu((\Lambda^{\alpha}(\qq_a))^c) \geq |C| - {1\over 2}\big({2a}\big)^{d} > {1\over 2}\big({2a}\big)^{d}$.
    \par In addition, since $\Lambda^{\alpha}(\qq_a)$ is a union of at most $K$ slabs, then $\big(\Lambda^{\alpha}(\qq_a)\big)^c$ is a union of at most $ D = K+1 \leq {M^2 \over 2}(a\sqrt{d}+1) + 1$ polyhedra. Using that $a\geq {\pi \over \theta} \geq {1\over 2}$, the average volume of these polyhedra is bounded as follows:
    \[
        {\nu((\Lambda^{\alpha}(\qq_a))^c) \over D} \geq {\big({2a}\big)^{d} \over {{M^2 \over 2}(a\sqrt{d}+1) + 1}} \geq {\big({2a}\big)^{d} \over {{M^2 \over 2}(a\sqrt{d}+2a) + 2a}} \geq {4\over M^2(\sqrt{d}+2)+2}(2a)^{d-1}.
    \]
    meaning that $\big(\Lambda^{\alpha}(\qq_a)\big)^c$ contains a polyhedron of volume greater than or equal to $${4\over M^2(\sqrt{d}+2)+2}(2a)^{d-1}.$$
\end{proof}

In order to use the "good" projection for rearranging the matrix $U_N$ (see step 2 in the proof outline), $w$ should be an element of the grid $\Gg_N$. The following Theorem proves the existence of such projection from a certain side-length size of the cube $\qq_a$.
\begin{proposition}\label{prop:proj_inG}
    For $A:=(\Lambda^{\alpha}(\qq_a))^c$ from the above Theorem, if $N > a > {\sqrt[d-1]{M^2(\sqrt{d}/2+1)+1}\over 2}$, then $\Gg_N \cap A \neq \emptyset$.
\end{proposition}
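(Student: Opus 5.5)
We argue that the complement set $A=(\Lambda^{\alpha}(\qq_a))^c$ is too large to miss every integer point of $\qq_a$. Theorem \ref{thm:union-bound} shows that $A$ contains a polyhedron $T$ with $|T|\geq \frac{4}{M^2(\sqrt{d}+2)+2}(2a)^{d-1}$. Since $N>a$, we have $\qq_a\cap\ZZ^d\subset\Gg_N$, so it suffices to find a point of $\ZZ^d$ in $A$, or in $T$.

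Rather than use $T$ directly (a convex body of large volume need not contain lattice points), we work with the bad set $\Lambda^{\alpha}(\qq_a)$. The plan is to count how many lattice points the slabs can cover and compare this with the total number of lattice points in $\qq_a$. Take $a$ to be an integer (or replace it by $\lfloor a\rfloor$; the bound on $a$ is stated with a floor in Theorem \ref{thm:entire-spec}). Then $\qq_a$ contains $(2a+1)^d$ points of $\ZZ^d$.

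By Theorem \ref{thm:slab}, $\Lambda^{\alpha}(\qq_a)$ is a union of at most $K$ slabs, where $K\le \frac{M^2}{2}(a\sqrt d+1)$. Each slab is orthogonal to some $y=c_j-c_k$ and has width $2\alpha/\theta$. Here is the key lattice estimate. For a slab whose normal is $u$, pick a coordinate direction $e_i$ with $|u_i|\ge 1/\sqrt d$. Every line of lattice points parallel to $e_i$ inside $\qq_a$ meets the slab in an interval of length at most $\frac{2\alpha}{\theta|u_i|}\le \frac{2\alpha\sqrt d}{\theta}$. Hence it contains at most $\frac{2\alpha\sqrt d}{\theta}+1$ lattice points. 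There are $(2a+1)^{d-1}$ such lines, so each slab contains at most $(2a+1)^{d-1}\bigl(\frac{2\alpha\sqrt d}{\theta}+1\bigr)$ lattice points. With the chosen $\alpha$ and $\theta\ge\pi/a$, the factor $\frac{2\alpha\sqrt d}{\theta}$ is small, and each slab covers roughly at most one "hyperplane's worth" $(2a+1)^{d-1}$ of points.

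Summing over the slabs, the bad lattice points number at most $c\,M^2(a\sqrt d+1)(2a+1)^{d-1}$ for an explicit constant $c$. This is linear in $a$ times $(2a)^{d-1}$, which is too weak against $(2a+1)^d$ unless $M^2\sqrt d$ is small. So the main obstacle is that the crude count loses the true hypothesis. The stated threshold $2a>\sqrt[d-1]{M^2(\sqrt d/2+1)+1}$ reads $(2a)^{d-1}>M^2(\sqrt d/2+1)+1$, which compares the number of polyhedral pieces $D$ of $A$ with $(2a)^{d-1}$. This suggests the intended route uses the pieces of $A$ instead.

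The plan for that route is as follows. $A$ is cut by at most $D\le \frac{M^2}{2}(a\sqrt d+1)+1$ slabs into convex cells. Each lattice hyperplane section ($x_i=$ const) of $\qq_a$ carries $(2a+1)^{d-1}$ points, and one slab meets a given integer line in only $O(1)$ points. Choosing a line direction per slab as above, a pigeonhole argument over the $(2a+1)^{d-1}$ lattice lines in a fixed direction shows that if $(2a)^{d-1}>D'$, with $D'\approx M^2(\sqrt d/2+1)+1$ the number of slab families meeting the lines, then some lattice line escapes enough slabs to leave a free lattice point. I expect that matching the exact constant requires care: one must show each pair $(j,k)$ contributes at most $\sqrt d/2+1$ bad points per line (via $L$ and the slab width) rather than $a\sqrt d$. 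This is the delicate step, and I would first check it by bounding, on a single lattice line of length $2a$, the number of slabs of Theorem \ref{thm:slab} met, using spacing $2\pi/\theta\ge 1$ when $\theta\le 2\pi$.
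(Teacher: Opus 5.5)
Your proposal does not prove the statement. The first route fails, as you note yourself. The second rests on the unproved claim that each pair $(j,k)$ leaves only about $\sqrt d/2+1$ bad lattice points on a lattice line. That claim is false, and no argument can close the gap, because the proposition is false as stated.

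Take $d=2$, $M=3$, $c_1=(0,0)$, $c_2=(\pi,0)$, $c_3=(0,\pi)\in\TT^2$, and for instance $a=9$, $N=10$ (the threshold is about $8.18$). The pairwise Euclidean distances are $\pi,\pi,\sqrt2\pi$, so $\pi/a\le\theta\le 2\pi$ holds; the wrap-around separations also equal $\pi$. For every $w\in\ZZ^2$, the numbers $\langle w,c_1\rangle=0$, $\langle w,c_2\rangle=\pi w_1$, $\langle w,c_3\rangle=\pi w_2$ all lie in $\{0,\pi\}$ modulo $2\pi$. So two of them coincide, and $\|\langle w,c_j-c_k\rangle\|_{\TT}=0\le\alpha$ for some $j\neq k$. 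Hence every lattice point lies in $\Lambda^{\alpha}(\qq_a)$, and $\Gg_N\cap A=\emptyset$ for every $N$.

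The same example shows where your per-line count goes wrong. The pair $(1,2)$ alone makes every $w$ with $w_1$ even bad, which is about $a$ points on each lattice line in direction $e_1$. Its slabs from Theorem~\ref{thm:slab} are spaced by $2\pi/\theta=2$, and each one contains a whole lattice hyperplane. So the failure of your route-one estimate is not just lossiness: the slabs can genuinely cover every lattice point. Even for $M=2$, the Euclidean hypothesis of Theorem~\ref{thm:union-bound} allows $c_1-c_2=(2\pi-\varepsilon,0,\dots,0)$. Then $\|\langle w,c_1-c_2\rangle\|_{\TT}\le a\varepsilon$ for every integer $w\in\qq_a$.

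Your remark that large volume does not force lattice points is exactly where the paper's own proof breaks. It asserts that a polyhedron of volume $V_j$ contains at most $\lfloor V_j\rfloor+1$ lattice points. This fails for a thin slab around a lattice hyperplane, whose volume is of order $\alpha(2a)^{d-1}/\theta$ yet which contains about $(2a+1)^{d-1}$ lattice points. The proof also attributes the volume bound $\frac12(2a)^d$ to $A$ rather than to $\Lambda^{\alpha}(\qq_a)$.

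A correct version must exclude such arithmetic coincidences or restrict the setting. For example, take a single pair ($M=2$) under the wrap-around separation $\|c_1-c_2\|_{\TT^d}\ge\pi/a$ used in Proposition~\ref{prop:proj}. Reduce $c_1-c_2$ modulo $(2\pi\ZZ)^d$ to $y'$ with $\|y'\|_\infty=\|c_1-c_2\|_{\TT^d}$, and pick $i$ with $t:=|y'_i|=\|y'\|_\infty\in[\pi/a,\pi]$. Then $w=me_i$ with $m=\lceil \pi/(2t)\rceil\le\lceil a/2\rceil\le a<N$ gives $\|\langle w,c_1-c_2\rangle\|_{\TT}=mt\in[\pi/2,\pi]$. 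This exceeds $\alpha$, with no counting needed.
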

\begin{proof}
    By Theorem \ref{thm:union-bound}, the set $A$ consists of $D \leq {M^2 \over 2}(a\sqrt{d}+1) + 1$ polyhedra. Using that $a \geq {1\over 2}$ we have that $D \leq a\big({M^2 \over 2}(\sqrt{d}+2) + 2\big)$. The total volume of $A$ is at most ${1\over 2}(2a)^d$. Denote the volumes of these polyhedra by $V_1,...,V_D$. Each point in $\Gg_N$ can be represented by a $d$-dimensional cube of side length of $1$ which has a volume of $1^d = 1$. Then each polyhedron contains at most $\big\lfloor{V_j \over {1^d}}\big\rfloor + 1$ cubes (i.e points) from $\Gg_N$. Overall, $A$ contains at most
    \[
        \sum_{j=1}^D \big\lfloor{V_j}\big\rfloor + 1 \leq {1\over 2}(2a)^d + D
    \]
   points from $\Gg_N$. Since the number of points in $[-a,a]$ is exactly $(2a)^d$, we have
   \[
        \big|\Gg_N \cap A\big| \geq (2a)^d - {(2a)^d\over 2} - a\big({M^2 \over 2}(\sqrt{d}+2) + 2\big) > 0
   \]
\end{proof}

Throughout the rest of the section, for any $((h^{(j)},\tau^{(j)},n^{(j)})_{j=1}^M),\eta)$)-cluster configuration as in definition~\ref{def:multi_cluster}, $\tau^{(j)},n^{(j)},\eta,M$ and $d$ are assumed to be fixed parameters. Let $h := \max_j\{h^{(j)}\}_{j=1}^M$, we consider the asymptotic regime $Nh \ll 1$, where $N \rightarrow \infty$ and there exists a constant $0 < C \leq 1$ such that $h < {C \over N}$. In our main results, Theorems \ref{thm:multi-sv} and \ref{thm:entire-spec}, $C$ is not arbitrary and is upper bounded by some constant $C_7$. Finally, by a constant, we mean a quantity that is independent of $N$ and $h$.


\par From the above proposition, if we set
\[
    a := \left\lfloor {\sqrt[d-1]{M^2(\sqrt{d}/2+1)+1}\over 2} \right\rfloor + 1,
\]
then we can choose a ``good'' projection \( w \in \Gg_N \cap \qq_a\), where $w$ is bounded by a constant, which preserves the well-separation of the clusters after projecting them by $w$. We formalize this in the following proposition.
\begin{proposition}[Admissible Projection]\label{prop:proj}
     Let $\ci$ be a $((h^{(j)}, n^{(j)})_{j=1}^M, \eta)$ multi-cluster configuration. Then for ${\pi \over a} \leq \eta \leq 2\pi$ and $a:=\left\lfloor {\sqrt[d-1]{M^2(\sqrt{d}/2+1)+1}\over 2} \right\rfloor + 1$, there exists $w \in \Gg_N\setminus \{\boldsymbol{0}\}$ where $\|w\|$ is bounded only by a constant depending on $d$ and $M$, such that for all $\mathcal{C}_j, \mathcal{C}_k$ and $j\neq k$ we have
    {\begin{align*}
        \min_{j\neq k}\min_{x\in \mathcal{C}_j,\tilde{x}\in \mathcal{C}_k}\| \langle w,x\rangle - \langle w,\tilde{x}\rangle\|_{\mathbb{T}} \geq {1\over 4}{\sqrt{2}\pi \over (\sqrt{d}+1)M^2}.
    \end{align*}}
    We call such $w$ an \textbf{admissible projection}.
\end{proposition}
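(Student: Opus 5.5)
The plan is to apply Theorem~\ref{thm:union-bound} and Proposition~\ref{prop:proj_inG} to the cluster centers $c_j$, which yields an integer direction $w$ in a bounded cube. Then a perturbation argument passes the separation bound from the centers to the actual nodes, losing at most a factor of two because every node lies within $O(h)$ of its center.

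\textbf{Step 1: separation of the centers.} For $x\in\mathcal{C}_j$ and $\tilde x\in\mathcal{C}_k$ we have $\|x-\tilde x\|_{\TT^d}\ge\eta$ and $\|x-c_j\|_\infty\le h$, since $c_j$ is an average of points within $h$ of $x$ (working in a local lift of the torus, which is legitimate because $h\ll 1$). Hence $\theta:=\min_{j\ne k}\|c_j-c_k\|\ge \eta-2h$. I also have to reconcile the norms: Theorem~\ref{thm:slab} uses the Euclidean norm and the inequality $\|\cdot\|_2\ge\|\cdot\|_\infty$ goes in the favorable direction. I also need to choose the representative of $c_j-c_k$ realizing the wrap-around distance, which is possible because $\langle w,\cdot\rangle$ modulo $2\pi$ is invariant under $(2\pi\ZZ)^d$ shifts when $w\in\ZZ^d$.

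A small issue is that $\theta\ge\pi/a$ may fail by $2h$. I would handle this either by noting that Theorem~\ref{thm:slab} only needs $\theta$ bounded below by a constant, so $L$ and the constants change negligibly, or by slightly enlarging $a$. The upper bound $\theta\le 2\pi$ is only used to bound $L$, and it holds up to replacing $\sqrt d$ factors, since torus distances are at most $\pi\sqrt d$ in Euclidean norm; I would treat it as in the statement's hypothesis.

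\textbf{Step 2: an integer admissible direction.} With the stated $a$, and for $N>a$ (true for large $N$), Proposition~\ref{prop:proj_inG} gives $w\in\Gg_N\cap A\subset\qq_a$ such that
\[
\|\langle w,c_j-c_k\rangle\|_{\TT}> \alpha:=\tfrac12\tfrac{\sqrt2\pi}{(\sqrt d+1)M^2}
\]
for all $j\ne k$. Since $\|w\|\le a\sqrt d$, the norm of $w$ depends only on $d$ and $M$. Moreover $w\ne 0$, because $w=0$ gives $\langle w,c_j-c_k\rangle=0$, which lies in the bad set.

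\textbf{Step 3: from centers to nodes.} For $x\in\mathcal{C}_j$ and $\tilde x\in\mathcal{C}_k$ the triangle inequality in $\TT$ gives
\[
\|\langle w,x-\tilde x\rangle\|_\TT\ge \|\langle w,c_j-c_k\rangle\|_\TT-|\langle w,x-c_j\rangle|-|\langle w,\tilde x-c_k\rangle|\ge \alpha-2\|w\|_1 h\ge \alpha-2da\,h.
\]
Since $h<C/N\to0$, for $N\ge N_0$ we have $2dah\le\alpha/2$, which gives the bound $\alpha/2=\tfrac14\tfrac{\sqrt2\pi}{(\sqrt d+1)M^2}$.

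\textbf{Main obstacle.} I expect the hardest part to be Step 1: reconciling the $\|\cdot\|_{\TT^d}$ separation $\eta$ of the nodes with the Euclidean hypothesis $\pi/a\le\theta\le 2\pi$ on the centers required by Theorem~\ref{thm:union-bound}, including the choice of lifts. I would first try to absorb the discrepancy into the constants via the lattice invariance noted in Step 1.
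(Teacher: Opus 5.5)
Your proposal is correct and takes essentially the same route as the paper. You apply Theorem~\ref{thm:union-bound} and Proposition~\ref{prop:proj_inG} to the cluster centers to get $w\in\Gg_N\cap\qq_a$ with center separation $\tfrac12\tfrac{\sqrt2\pi}{(\sqrt d+1)M^2}$, then absorb the $O(\|w\|h)$ perturbation to lose a factor of two at the nodes. Your Step~1 is in fact more careful than the paper, which asserts $\pi/a\le\eta\le\eta_c\le\theta$ directly (this can fail by $O(h)$, as you note) and does not address lifts or $w\neq\boldsymbol{0}$.
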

\begin{proof}
    Let $\mathcal{C}:=\{c_j\}_{j=1}^M$ such that $c_j:={1\over n^{(j)}}\sum_{x\in \mathcal{C}^{(j)}}x$ is the center of cluster $\mathcal{C}^{(j)}$. Moreover, let $\eta_c:= \min_{j,k}\|c_j - c_k\|_{\TT^d}$ and $\theta := \min_{j,k}\|c_j - c_k\|_{2}$. Since that $c_j \in \TT^d$ for all $j=1,\dots,M$ we have that $\theta \leq 2\pi$, $\eta_c = \min_{j,k}\|c_j - c_k\|_{\infty}$ and from the definition of $\eta$
    $$\eta \leq  \min_{j\neq k}\min_{x\in \mathcal{C}^{(j)}, \ y\in \mathcal{C}^{(k)}}\|x-y\|_{\TT^d} \leq \min_{j\neq k}\min_{x\in \mathcal{C}^{(j)}, \ y\in \mathcal{C}^{(k)}}\|x-y\|_{\infty}.$$ By norm inequalities we have 
    \[
        {\pi \over a} \leq \eta \leq \eta_c \leq \theta.
    \]
    Thus we have that $ {\pi \over a} \leq \theta \leq 2\pi$ and we can use Theorem \ref{thm:union-bound} on the set of centers $\mathcal{C}$. Combined with Proposition \ref{prop:proj_inG} and setting $a:=\left\lfloor {\sqrt[d-1]{M^2(\sqrt{d}+1)}\over 2} \right\rfloor + 1$ we get that there exists $w \in \Gg_N$ such that $\|w\|\leq a\sqrt{d}$ and for all $c_j,c_k \in \mathcal{C}, j\neq k$ we have
    {\begin{align*}
        \min_{j\neq k}\| \langle w,c_j\rangle - \langle w,c_k\rangle\|_{\mathbb{T}} = \theta_w, \ \theta_w \geq {1\over 2}{\sqrt{2}\pi \over (\sqrt{d}+1)M^2}.
    \end{align*}}
    For any $x \in \mathcal{C}_j$ there exists $r \in \mathbb{R}^d$ with $\|r\| \leq 1$ such that 
    $x = c_j + rh^{(j)}$, and similarly $\tilde{x} = c_k + \tilde{r}h^{(k)}$ for some $\tilde{r} \in 
    \mathbb{R}^d$ with $\|\tilde{r}\| \leq 1$. Let $x \in \mathcal{C}_j$ and $\tilde{x} \in 
    \mathcal{C}_k$ be the minimizing nodes, i.e.
    $$
        (x, \tilde{x}) \in \argmin_{\substack{j \neq k \\ y \in \mathcal{C}_j,\, \tilde{y} \in 
        \mathcal{C}_k}} \|\langle w, y\rangle - \langle w, \tilde{y}\rangle\|_{\mathbb{T}}.
    $$
    Then, using the inverse triangle inequality (which is valid since $\|\cdot\|_{\TT^d}$ is a metric)
\begin{align*}
    \min_{j\neq k}\min_{\substack{y\in \mathcal{C}_j \\ \tilde{y}\in \mathcal{C}_k}}
    \|\langle w,y\rangle - \langle w,\tilde{y}\rangle\|_{\mathbb{T}} 
    &= \|\langle w, x\rangle - \langle w, \tilde{x}\rangle\|_{\mathbb{T}} \\
    &= \|\langle w, c_j + rh^{(j)}\rangle - \langle w, c_k + \tilde{r}h^{(k)}\rangle\|_{\mathbb{T}} \\
    &= \|\langle w, c_j - c_k\rangle + \langle w, rh^{(j)} + \tilde{r}h^{(k)}\rangle\|_{\mathbb{T}} \\
    &\geq \big\|\|\langle w, c_j - c_k\rangle\|_{\mathbb{T}} - 
    \|\langle w, rh^{(j)} + \tilde{r}h^{(k)}\rangle\|_{\mathbb{T}}\big\|_{\mathbb{T}} \\
    &= \big\|\theta_w - \|\langle w, rh^{(j)} + \tilde{r}h^{(k)}\rangle\|_{\mathbb{T}}\big\|_{\mathbb{T}} \\
    &= \big\|\theta_w - |\langle w, rh^{(j)} + \tilde{r}h^{(k)}\rangle \mod (-\pi,\pi]|\big\|_{\mathbb{T}},
\end{align*}
    Now, because $|\langle w,rh^{(j)}+\tilde{r}h^{(k)}\rangle | \leq 2\|w\|\max \{h^{(j)},h^{(k)}\} < {\tilde{C}\over N} \ll {1\over 4}{\sqrt{2}\pi \over (\sqrt{d}+1)M^2}$ for some constant $\tilde{C}:=2\|w\|C$, we get the desired result. 
\end{proof}
Note that in the above Proposition, we considered the wrap-around distance from \prettyref{def:wrap_dist} between clusters and not the euclidean distance as in Theorems \ref{thm:slab} and \ref{thm:union-bound}.

\section{Vandermonde rows partitioning}\label{sec:van-row-par}
We now aim to prove the existence of \( K_1 \asymp N^{d-1} \) shifts such that the rows of the Vandermonde matrix can be partitioned into \(  K_1 \) matrices, each containing \( K_2 \asymp N \) rows. We begin with some definitions.

\begin{definition}\label{def:num-of-grid-p}
    For any polytope $T \subset \RR^d$, we define 
    \[
        \Gg(T) := \{p : p\in T \cap \ZZ^d\},
    \]
    as the grid points of the lattice $\ZZ^d$ in $T$. In addition, we denote by $|T|_{\ZZ^d}$ the number of grid points in $\Gg(T)$.
\end{definition}

\begin{definition}\label{def:oriented-cube}
    For any $w \in \mathbb{R}^d \setminus \{\boldsymbol{0}\}$, let
    $$v := \frac{N - \|w\|}{\|w\| \sqrt{d}} \cdot w$$
    be a scaled version of $w$, and let $\{e_1, e_2, \ldots, e_d\}$ be an orthonormal basis for 
    $\mathbb{R}^d$ where $e_1 = \frac{v}{\|v\|}$. Then the vector-oriented cube $\mathcal{Q}_w$ 
    is defined as:
    $$\qq_w := \left\{\sum_{i=1}^d t_i e_i : t_i \in \left[-\|v\|, 
    \|v\|\right] \right\}.$$
\end{definition}

The grid points in $\qq_w$ are going to be used to rearrange the rows of $U_N$ into 1D Vandermonde matrices. In the following lemma, we prove that the number of grid points in $\qq_w$ scales like $N^{d}$.
\begin{lemma}\label{cube-grid-p}
    For $d \leq {N - \|w\|\over 4}$ and $\|w\|\asymp 1$, we have that $|\qq_w|_{\ZZ^d} \asymp N^d$. In fact, $|\qq_w|_{\ZZ^d}$ is at least $\big\lfloor|\qq_{v}|\big\rfloor$ where 
    \begin{align*}
        &\qq_{v} := \left\{\sum_{i=1}^d t_i e_i : t_i \in \left[-\|\tilde{v}\|, \|\tilde{v}\|\right] \text{ for all } i\right\}, \quad \tilde{v} = {\|v\|-2\sqrt{d} \over \|v\|}v.
    \end{align*}
\end{lemma}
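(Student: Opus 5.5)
The plan is to prove the lower bound by a unit-cube covering argument; the upper bound will come from a simple containment.

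\textbf{Upper bound.} Note that $\|v\| = (N-\|w\|)/\sqrt d$. Hence every point of $\qq_w$ has Euclidean norm at most $\sqrt d\,\|v\| = N-\|w\| < N$. So $\qq_w \subset [-N,N]^d$, and therefore $|\qq_w|_{\ZZ^d} \le (2N+1)^d \lesssim N^d$.

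\textbf{Lower bound.} Associate to each lattice point $p\in\ZZ^d$ the unit cube $p+[-\tfrac12,\tfrac12)^d$. These cubes tile $\RR^d$, and each has diameter $\sqrt d$. Let $\qq_v$ be the shrunken rotated cube of half-side $\|\tilde v\| = \|v\|-2\sqrt d$. This is positive since $d \le (N-\|w\|)/4$ gives $\|v\| \ge 4\sqrt d$.

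Suppose a tile $p+[-\tfrac12,\tfrac12)^d$ meets $\qq_v$ at a point $z$. Then $\|p-z\|\le \sqrt d/2$. Expressing $p$ in the orthonormal basis $\{e_i\}$, each coordinate of $p$ differs from the corresponding coordinate of $z$ by at most $\sqrt d/2$. Since $|t_i(z)|\le \|v\|-2\sqrt d$, we get $|t_i(p)| \le \|v\|$, so $p\in\qq_w$.

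The tiles meeting $\qq_v$ cover $\qq_v$, and each tile has volume $1$. Hence the number of such tiles, which is at most $|\qq_w|_{\ZZ^d}$, is at least $|\qq_v| = (2\|\tilde v\|)^d$. Taking the integer part yields $|\qq_w|_{\ZZ^d}\ge \lfloor |\qq_v|\rfloor$.

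\textbf{Scaling.} Since $\|w\|\asymp 1$ and $d$ is fixed, $\|\tilde v\| = (N-\|w\|)/\sqrt d - 2\sqrt d \asymp N$ for $N$ large. Combined with the hypothesis, this gives $\|\tilde v\|\ge \|v\|/2$, so $|\qq_v| \ge \|v\|^d \gtrsim N^d$. Together with the upper bound, this gives $|\qq_w|_{\ZZ^d}\asymp N^d$.

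The main obstacle is bookkeeping: the cube $\qq_w$ is rotated, so coordinates must be handled in the basis $\{e_i\}$ rather than in the standard lattice coordinates. The bound on $|t_i(p)-t_i(z)|$ by the Euclidean distance $\|p-z\|$ handles this, since the coordinate projection onto each $e_i$ is $1$-Lipschitz. The margin $2\sqrt d$ is more than enough; $\sqrt d/2$ would suffice.
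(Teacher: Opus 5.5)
Your proof is correct and follows essentially the same route as the paper. You take the unit cubes centered at lattice points that meet the shrunken cube $\qq_v$, use the $2\sqrt d$ margin to place their centers in $\qq_w$, and compare volumes to get $|\qq_w|_{\ZZ^d}\ge\lfloor|\qq_v|\rfloor$, with the correct value $|\qq_v|=(2\|\tilde v\|)^d$. Your upper bound via $\qq_w\subset[-N,N]^d$ (the containment the paper itself uses in the proof of Theorem~\ref{thm:van-part}) is cleaner than the paper's direct comparison $|\qq_w|_{\ZZ^d}\le|\qq_w|$ of a lattice-point count with a volume.
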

\begin{proof}
    For $\qq_w$, the number of grid points is lower bounded by the number of unit cubes that are fully contained in $\qq_w$ and are aligned with the original coordinate axes (not $e_1,\dots,e_d$). So we first show that every unit cube that intersects $\qq_{v}$ is fully contained in $\qq_w$. Let $y$ be a point on the grid such that its corresponding unit cube $Q_y$ is not fully contained in $\qq_{v}$, just intersects it. Let $B_y$ be the smallest $d$-dimensional ball that contains $Q_y$. The diameter of $B_y$ is $\sqrt{d}$. To see that $\mathcal{Q}_v$ and $\mathcal{Q}_w$ are at distance at least $\sqrt{d}$ apart, note that by definition of $\tilde{v}$, the sides of $\mathcal{Q}_v$ are smaller than those of $\mathcal{Q}_w$ by $2\sqrt{d}$, and both cubes share the same center and orientation. Hence every point in $\mathcal{Q}_v$ is at least $\sqrt{d}$ from the complement of $\mathcal{Q}_w$. Since $Q_y$ intersects $\mathcal{Q}_{v}$, there exists a point $p \in Q_y \cap \mathcal{Q}_{v}$. For any other point $q \in Q_y$, we have $\|p - q\| \leq \sqrt{d}$, since $\sqrt{d}$ is the diameter of $B_y$. Since every point in $\mathcal{Q}_v$ is at least $\sqrt{d}$ from the complement of $\mathcal{Q}_w$, any point $q \in Q_y$ satisfies:
    $$\|q - p\| \leq \sqrt{d} \implies q \in \mathcal{Q}_w.$$
    Since $q \in Q_y$ was arbitrary, we conclude that $Q_y \subseteq \mathcal{Q}_w$.
    \par From the above, we conclude that the number of unit cubes that intersect (or fully contained in) $\qq_{v}$ is lower bounded by $\big\lfloor|\qq_{v}|\big\rfloor$. Thus, on the one hand, 
    \[
        |\qq_{v}| = \|\tilde{v}\|^d = \big\| {\|v\|-2\sqrt{d} \over \|v\|}v\big\|^d = \big({N - \|w\|\over \sqrt{d}} - 2\sqrt{d}\big)^d \geq \big({N \over 2\sqrt{d}}\big)^d
    \]
    for $d \leq {N - \|w\|\over 4}$, which means that $|\qq_w|_{\ZZ^d} \geq \big\lfloor\big({N \over 2\sqrt{d}}\big)^d\big\rfloor$. On the other hand, $$|\qq_w|_{\ZZ^d} \leq |\qq_w| = \|v\|^d = \big({N - \|w\|\over \sqrt{d}}\big)^d.$$ Thus, we get the desired result. For an illustration of this proof, see figure \ref{fig:grid-lemma}.
\end{proof}
\begin{figure}
    \centering
    \includegraphics[width=0.5\linewidth]{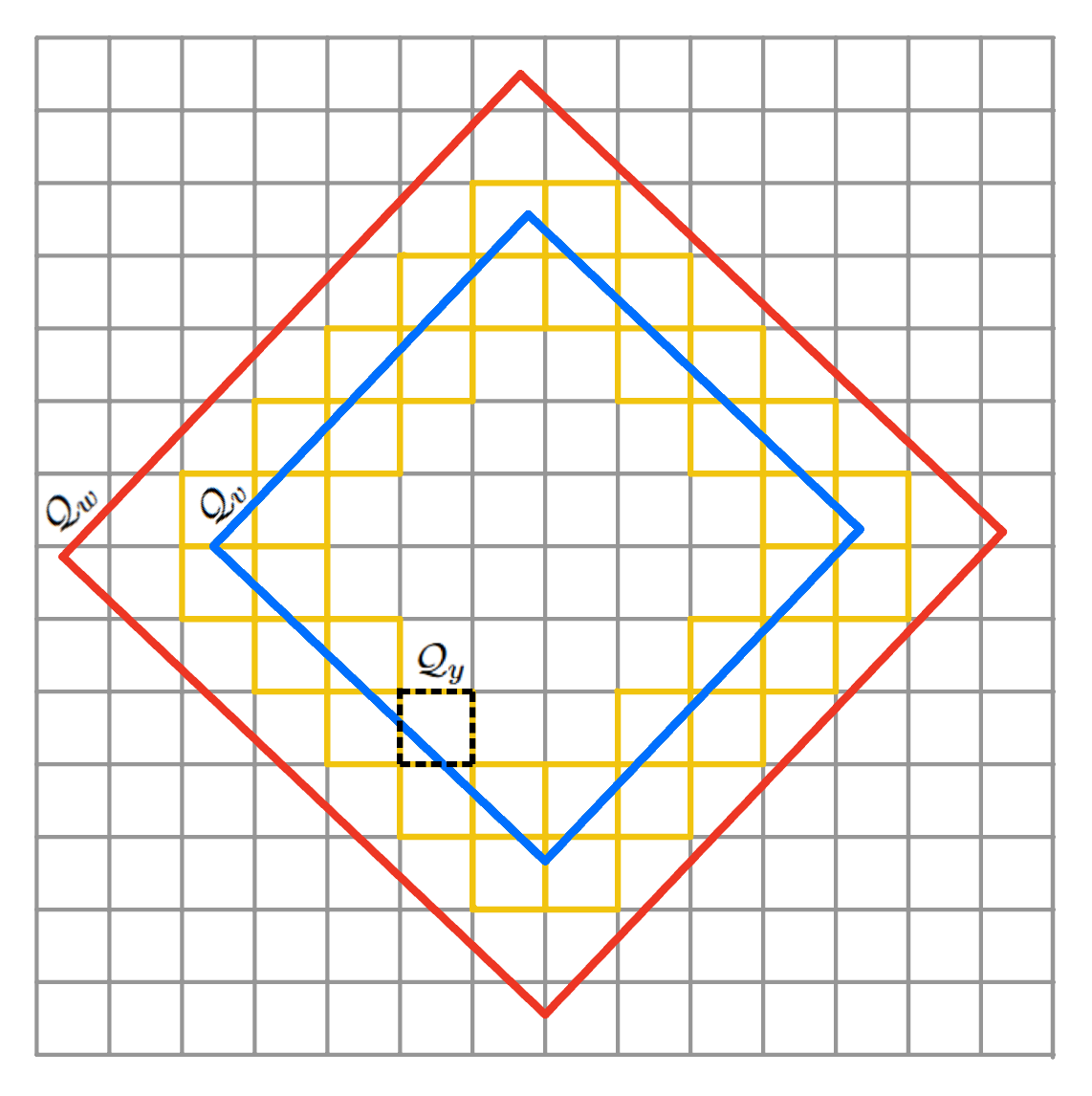}
    \caption{Illustration of the proof of Lemma \ref{cube-grid-p} for $d=2$. Red cube is $\qq_w$, Blue cube is $\qq_{v}$ and each yellow unit cube corresponds to $Q_y$. Each unit cube represents a grid point in $\Gg_N$ at its center.}
    \label{fig:grid-lemma}
\end{figure}

\begin{definition}[Lattice ray]\label{def:l-r}
    For any $w, s \in \RR^d$, we define the Lattice-ray $L_{w,s} := \{kw + s : k\in \mathbb{Z}\}$.
\end{definition}

    \begin{definition}[slice]
        For a unit vector $u\in\RR^d$ and $t,w\in \RR$, we define $S_u(t,w)$ to be  the $d$-dimensional slice  
        \[
            S_u(t,w) := \{x\in \RR^d : t\leq \langle x,u \rangle < t+w\}.
        \]
        Equivalently, $S_u(t+w)$ is a slice of width $w$ defined by two parallel hyperplanes orthogonal to $u$.
    \end{definition}

Now we prove the main result of this section. We show that $\qq_w$ contains $K_1 \asymp N^{d-1}$ lattice-rays, where each lattice-ray contains $K_2 \asymp N$ grid points and represents a 1D Vandermonde matrix.
\begin{theorem}\label{thm:van-part}
    For any projection $w \in \Gg_N\setminus \{\boldsymbol{0}\}$ such that $\|w\| \asymp 1$, there exists $K_1 \asymp N^{d-1}$ shifts $\{s_i\}_{i=1}^{K_1} \subset \Gg_N$ such that the rows of $U_N$ can be partitioned into: 
    \begin{enumerate}
        \item $K_1$ univariate Vandermonde matrices 
                \begin{equation}\label{eq:projected-vand-1d}
                    V_{s_i} = \big[\exp{(\imath \langle \ell, x\rangle)} \big]_{\ell \in I_{w,s_i}}^{x\in \ci},\qquad I_{w,s_i} = L_{w,s_i}\cap \qq_w
                \end{equation}
              of dimension $K_2^{(i)} \times n$ where $K_2^{(i)} = |I_{w,s_i}| \asymp N$ for $i=1,\dots, K_1$. 
        \item A residual matrix $$R_w =  \big[\exp{(\imath \langle t, x\rangle)}\big]_{t \in \Gg_N \setminus 
        \Gg(\qq_w)}^{x\in \ci}, \qquad \Gg(\qq_w) = \bigcup_{i=1}^{K_1} I_{w,s_i}$$ of dimension $({(2N+1)}^d - \sum_{i=1}^{K_1}K_2^{(i)})\times n$.
    \end{enumerate}
   
\end{theorem}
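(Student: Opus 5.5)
The plan is to partition the lattice points of the oriented cube $\qq_w$ according to the lattice rays $L_{w,s}$, and to let the residual matrix collect everything else. First, $\qq_w\subset\Gg_N$. Every point of $\qq_w$ has Euclidean norm at most $\sqrt d\,\|v\|=N-\|w\|$, so its $\ell_\infty$ norm is at most $N-\|w\|<N$. Hence $\Gg(\qq_w)\subset\Gg_N$, and moreover $p\pm w\in\Gg_N$ for every $p\in\Gg(\qq_w)$.

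Next, partition $\Gg(\qq_w)$. Since $w\in\ZZ^d$, the relation $p\sim q \iff p-q\in\ZZ w$ is an equivalence relation on $\ZZ^d$, and its classes are exactly the lattice rays $L_{w,s}$ with $s\in\ZZ^d$. Intersecting these classes with $\qq_w$ partitions $\Gg(\qq_w)$ into disjoint sets $I_{w,s}=L_{w,s}\cap\qq_w$. Each nonempty set is a run of consecutive points $s,s+w,\dots,s+(K-1)w$, because $\qq_w$ is convex and a ray meets it in a segment. Choose each representative $s_i$ to be the first point of its run, so $s_i\in\Gg_N$. Then
\[
V_{s_i}=\big[e^{\imath\langle s_i,x\rangle}e^{\imath k\langle w,x\rangle}\big]_{k=0,\dots,K-1}^{x\in\ci}=V_w\,\diag\big(e^{\imath\langle s_i,x\rangle}\big),
\]
which is a univariate Vandermonde matrix in the projected nodes $\langle w,x\rangle$, up to a column scaling. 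The rows of $U_N$ indexed by $\Gg_N\setminus\Gg(\qq_w)$ form $R_w$, and a row permutation gives the block form claimed in the statement.

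The remaining task, and the main obstacle, is counting. We need $K_1\asymp N^{d-1}$ and each $K_2^{(i)}\asymp N$. The second requirement may fail for rays that only graze a corner of $\qq_w$. So I would keep only the good rays and put the short ones into $R_w$, taking $\Gg(\qq_w)$ in the statement to mean the union of the kept segments.

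For the upper bound on ray length, $w$ is parallel to $e_1$, so a segment inside $\qq_w$ has length at most $2\|v\|$. Therefore
\[
K_2^{(i)}\le \frac{2\|v\|}{\|w\|}+1\lesssim N .
\]

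For the lower bound, use the inner cube $\qq_v$ from Lemma~\ref{cube-grid-p}, shrunk further in the $e_1$ direction by a constant. Every lattice point $p$ in this shrunken cube lies on a ray whose segment in $\qq_w$ extends about $\|v\|/\|w\|\gtrsim N$ steps in at least one direction. This holds because moving along $e_1$ keeps the transverse coordinates fixed and $\qq_w$ is a product in the $e_i$ coordinates. So declare a ray good if it meets the shrunken cube. Each good ray then has $K_2^{(i)}\asymp N$.

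By Lemma~\ref{cube-grid-p}, the shrunken cube still contains $\gtrsim N^d$ lattice points, and each ray contains at most $O(N)$ of them. Hence the number of good rays satisfies $K_1\gtrsim N^{d-1}$. Conversely, every ray meeting $\qq_w$ hits the transverse slab $S_{e_1}(t,\|w\|)$ of width $\|w\|$ at least once: consecutive points on the ray differ by $w$, whose $e_1$ component is exactly $\|w\|$, so choosing $t$ inside the $e_1$-extent of the cube works. The number of lattice points in the slab intersected with $\qq_w$ is $O(N^{d-1})$, by the same unit-cube volume argument as in Lemma~\ref{cube-grid-p}. This gives $K_1\lesssim N^{d-1}$ and completes the claim.
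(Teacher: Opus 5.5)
Your construction is the paper's: lattice rays $L_{w,s}$ cut by the oriented cube $\qq_w$, with a slab of width $\|w\|$ orthogonal to $w$ meeting each ray exactly once. Your bookkeeping is cleaner in two places.
\begin{itemize}
\item Partitioning $\Gg(\qq_w)$ by cosets of $\ZZ w$ and using convexity gives disjointness and covering at once. The paper instead takes the lattice points of $A_w=S_u(0,\|w\|)\cap\qq_w$ as representatives, proves only that they lie on distinct rays, and leaves covering to a remark about $H_0,H_w$.
\item Your observation that every ray meeting $\qq_w$ hits each such slab exactly once is what justifies the paper's pigeonhole step and its ``without loss of generality $t=0$''.
\end{itemize}

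The step to drop is discarding ``short'' rays, because there are none. Since $w\parallel e_1$, $\qq_w$ is the product $[-\|v\|,\|v\|]^d$ in the coordinates $t_i=\langle\cdot,e_i\rangle$. Any ray meeting $\qq_w$ therefore has fixed transverse coordinates $(t_2,\dots,t_d)\in[-\|v\|,\|v\|]^{d-1}$, and its $t_1$-coordinates form a progression of step $\|w\|$. So it contains $\lfloor 2\|v\|/\|w\|\rfloor$ or $\lfloor 2\|v\|/\|w\|\rfloor+1$ points of $\qq_w$. This is the argument you already give for the shrunken cube, applied to all of $\qq_w$.

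As written, your proof replaces $\Gg(\qq_w)$ with a proper subset. By Definition~\ref{def:num-of-grid-p}, $\Gg(\qq_w)$ is the full set of lattice points of $\qq_w$, so you do not establish the asserted identity $\Gg(\qq_w)=\bigcup_i I_{w,s_i}$. This is not cosmetic. Lemma~\ref{lem:trig_cancel} uses that $\Gg_N\setminus\Gg(\qq_w)$ is exactly the set of lattice points outside the cube $\qq_w$, since its cancellation compares $\qq_w$ with its translate $\qq_w+w$.

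Once you know all rays are long, the counting also simplifies:
$|\qq_w|_{\ZZ^d}/(\lfloor 2\|v\|/\|w\|\rfloor+1)\le K_1\le |\qq_w|_{\ZZ^d}/\lfloor 2\|v\|/\|w\|\rfloor$.
Lemma~\ref{cube-grid-p} then gives $K_1\asymp N^{d-1}$ directly, without the shrunken cube or a separate slab estimate.
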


\begin{proof}
    \par Let $\qq_N:=[-N,N]^d$. We show that for any $w$, $\qq_w$ is fully contained in $\qq_N$. Since $\qq_w$ is centered at the origin, it is enough to show that the maximal diameter (of the circumscribed ball) of the cube $\qq_w$ is equal or smaller than the side length of $\qq_N$. The maximal diameter of $\qq_w$ is of length $\sqrt{d}\cdot \|v\| = N-\|w\|$ which is smaller than the side length of $\qq_N$, equal to $N$. 
    \par We now prove that $\qq_w$ contains $K_1 \asymp N^{d-1}$ Lattice-rays $\{L_{w,s_i}\}_{i=1}^{K_1}$ each containing $K_2 \asymp N$ grid points.
    Using Lemma \ref{cube-grid-p}, we have $|\qq_w|_{\ZZ^d} \asymp N^{d}$. For $u:={w \over \|w\|}$ and any $t\in \RR$, let $A_w(t) := S_u(t,\|w\|)\cap \qq_w$. Since the grid points are uniformly distributed in $\qq_w$ and using the pigeonhole principle, there exists $t$ such that for any $A_w \subset \qq_w$ we have  
    $$\bigg\lfloor\frac{|\qq_w|_{\ZZ^d}}{\|v\|}\|w\|\bigg\rfloor \leq |A_w(t)|_{\ZZ^d} \leq \bigg\lceil\frac{|\qq_w|_{\ZZ^d}}{\|v\|}\|w\|\bigg\rceil.$$  
    Without loss of generality, we choose $t=0$ and let $A_w:=A_w(0)$. Thus, we have that  $K_1 := |A_w|_{\ZZ^d} \asymp N^{d-1}$. 
    \par Let $H_0, H_w$ denote the $d-1$ dimensional hyperplanes that are orthogonal to $w$ that pass through the grid point $0$ (i.e. the origin) and $w$ respectively. Note that $H_0 \subset A_w$ and $H_w \not\subset A_w$ and that each (grid) point in $\Gg(H_0)$ belongs to a Lattice ray of a point in $\Gg(H_w)$, because the distance between $H_0$ and $H_w$ is $\|w\|$ and $H_0$ and $H_w$ are parallel.
    \par Now we show that each grid point in $A_w$ belongs to a different Lattice-ray $L_{w,s_i}$ (i.e. $\{s_i\}_{i=1}^{K_1} = \Gg(A_w)$) and all Lattice-rays $\{L_{w,s_i}\}_{i=1}^{K_1}$ are disjoint. Let $p_1,p_2 \in A_w$. Then we can write
    \[
        p_i = m_{i,1}w + s_i, \ s_i = \sum_{j=2}^d m_{i,j}e_j, 
    \]
    where $m_{i,j} \in \ZZ$ for $i=1,2$ and $j=1,\dots,d$. If $m_{1,1} = m_{2,1}$ then $p_1$ and $p_2$ belong to different lattice-rays since $s_1 \neq s_2$ (i.e. $p_i \in L_{w,s_i}$). Otherwise, if $m_{1,1} \neq m_{2,1}$, we assume that $p_1,p_2\in L_{w,s}$. Then, we have 
    \begin{align*}
        &p_1 = m_{1,1}w + s, \ p_2 = m_{2,1}w + s\\
        &\Rightarrow p_1 - p_2 = (m_{1,1} - m_{2,1})w,
    \end{align*}
    where $|m_{1,1} - m_{2,1}| \geq 1$. A contradiction, since $p_1,p_2\in A_w$ and $H_w \not\subset A_w$.

    To conclude, we get that $\qq_w$ has at least $|A_w|_{\ZZ^d}$ Lattice-rays and each $s_i\in \Gg(A_w), i=1,\dots,K_1$ is a Lattice-ray candidate. 
    \par Let $K_2^{(i)} = |I_{w,s_i}| = |L_{w,s_i} \cap \qq_w|$, we have that $K_2^{(i)} \asymp N$ because there are at least $\bigg\lfloor{\|v\| \over \|w\|}\bigg\rfloor = \bigg\lfloor{N-\|w\| \over \sqrt{d}\|w\|}\bigg\rfloor$ grid points in $L_{w,s_i} \cap \qq_w$ and at most $\bigg\lfloor{\|v\| \over \|w\|}\bigg\rfloor + 1 = \bigg\lfloor{N - \|w\| \over \sqrt{d}\|w\|}\bigg\rfloor + 1$. Note that since $w\in\Gg_N\setminus\{0\}$, in particular, $\|w\|\geqslant 1$. Overall, we get the result of part 1 in the theorem and the second part immediately follows.
    For an illustration of the proof see figure \ref{fig:lattice-rays}.
\end{proof}
\begin{figure}[t]
\centering
\includegraphics[width=0.70\textwidth]{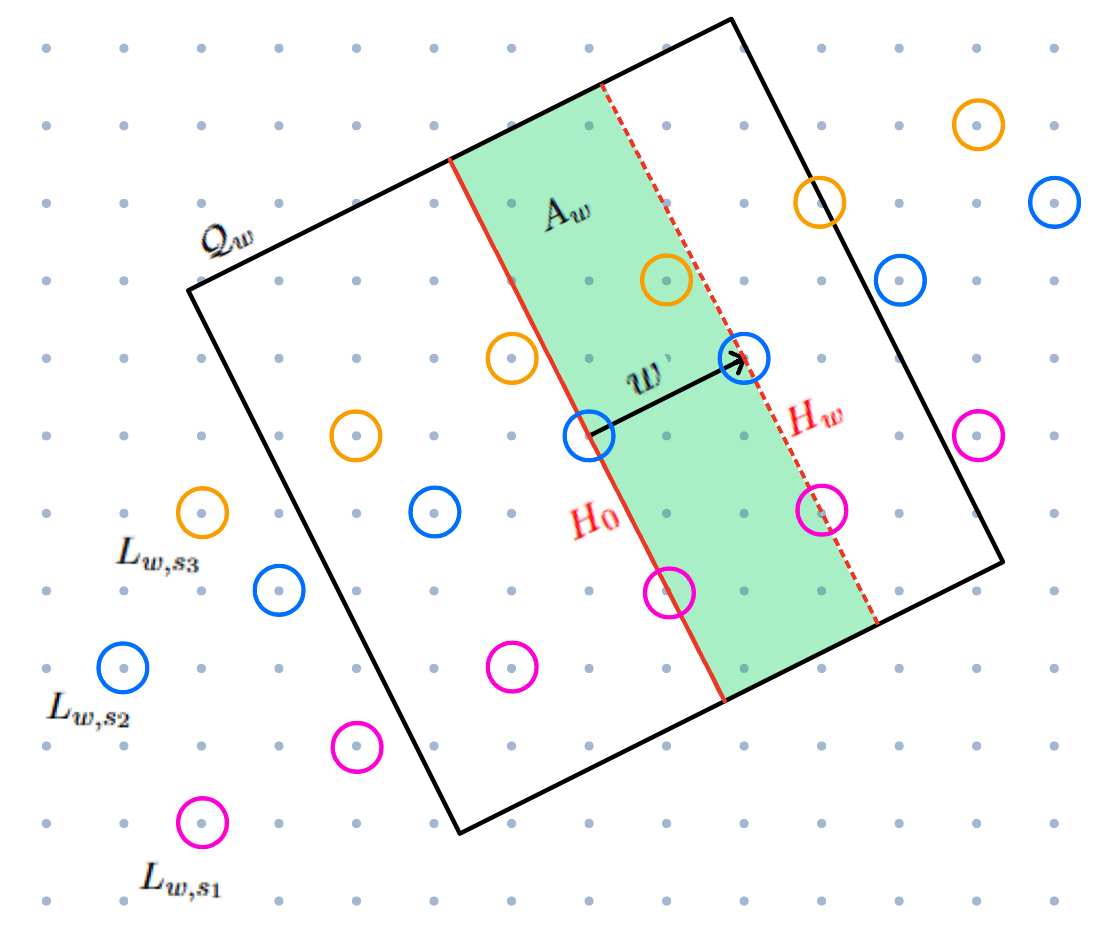}
\caption{Illustration of the proof of \prettyref{thm:van-part} for $d=2$.}
\label{fig:lattice-rays}
\end{figure}

\begin{remark}
    The proofs of Lemma~\ref{cube-grid-p} and Theorem~\ref{thm:van-part} remain valid for $v := \frac{N}{\|w\| \sqrt{d}} \cdot w$; our choice in Definition~\ref{def:oriented-cube} is tailored to Lemma~\ref{lem:trig_cancel}.
\end{remark}

\section{Cluster subspace orthogonality}\label{sec:cl-sub-orth}
In this section we prove that if two clusters $\mathcal{C}_j,\mathcal{C}_k\subseteq\ci$ satisfy suitable separation conditions, then the angle between the subspaces spanned by $U_N(\mathcal{C}_j)$ and $U_N(\mathcal{C}_k)$ is bounded above by $${\pi\over 2}-{\alpha\over N}-\beta Nh - {\gamma\over \sqrt{N}}$$ for some constants $\alpha,\beta,\gamma$. In this case we say these subspaces are near-orthogonal.
Recall some definitions from \cite{batenkov2021}, 
\begin{definition}\label{def:col-span-1d}
    For a set $Y = \{y_1,...,y_m\} \subset \RR$ and finite $Z \subset \mathbb{Z}$, we define 
    \begin{equation}
        L(Y,Z) = \operatorname{span}\{v_1,...,v_m\}, \quad v_k := \big(e^{izy_k}\big)_{z\in Z}.
    \end{equation}
\end{definition}
\begin{definition}
    Denote by $L := L(Y,Z)$ and $\Tilde{L} := L(\Tilde{Y},Z)$. Then,
    \begin{equation}
        \measuredangle_{\min}(L,\Tilde{L}) = \min_{\small{v \in L\setminus\{0\}, \tilde{v} \in \Tilde{L}\setminus\{0\} }}\arccos\biggl({{|\langle v, \tilde{v}\rangle|}\over {\|v\| \cdot \|\tilde{v}\|}}\biggr)
    \end{equation}
    
\end{definition}

\begin{definition}[Cluster subspace]
    Let $\ci = \{x_1,...,x_n\} \subset \TT^d$ and let $\uu_1,...,\uu_n$ denote the columns of the multivariate Vandermonde matrix $U_N(\ci)$. We denote by $\Ll(\ci)$ the subspace spanned by $\uu_1,...,\uu_n$, 
    \begin{equation}
        \Ll(\ci) := \Ll(\ci,\Gg_N) = \operatorname{span}\{\uu_1,\dots,\uu_n\} \subset \mathbb{C}^{(2N+1)^d}.
    \end{equation}
\end{definition}

\begin{definition}
    Denote by $\Ll := \Ll(\ci,\Gg_N)$ and $\Tilde{\Ll} := \Ll(\Tilde{\ci},\Gg_N)$. Then,
    \begin{equation}
        \measuredangle_{\min}(\Ll,\Tilde{\Ll}) = \min_{\small{\uu \in \Ll\setminus\{0\}, \tilde{\uu} \in \Tilde{\Ll}\setminus\{0\} }}\arccos\biggl({{|\langle \uu, \tilde{\uu}\rangle|}\over {\|\uu\| \cdot \|\tilde{\uu}\|}}\biggr).
    \end{equation}
    
\end{definition}
In the following corollary, we restate Theorem \ref{thm:van-part} after truncating each univariate Vandermone matrix $V_{s_i}$ in \eqref{eq:projected-vand-1d} to the length of the shortest lattice ray; the discarded rows are added to the residual matrix $R_w$.
\begin{corollary}\label{cor:van-vec}
    Let
    \begin{align*}
    K_2^* &= \min_{i}\{K_2^{(i)}\}_{i=1}^{K_1},\\
    K_2 &= \begin{cases}
    K_2^* & \text{if } K_2^* \text{ is odd}, \\
    K_2^* - 1 & \text{if } K_2^* \text{ is even}.
    \end{cases}.
    \end{align*}
    Moreover, put $Z_{K_2}:=\{-{K_2-1 \over 2},\dots,{K_2-1 \over 2}\}$ and denote by $\mathcal{I}_{w,s} := \{kw + s: k \in Z_{K_2}\}$. Then, with a slight abuse of notation,
    following Theorem \ref{thm:van-part}, there exists a permutation matrix $P\in \RR^{(2N+1)^d \times (2N+1)^d}$ such that each column $\uu_r$ of $U_N$ can be written as follows:
    \begin{align*}
         \uu_r &= P\cdot \begin{pmatrix}
                                V_{s_1}(x_r) \\
                                \vdots \\
                                V_{s_{K_1}}(x_r) \\
                                R_w(x_r)
                        \end{pmatrix}, \\ 
        V_{s_i}(x_r) &= \big[\exp{(\imath \langle \ell, x_r\rangle)}\big]_{\ell \in \mathcal{I}_{w,s_i}} \\&= \big[e^{\imath \langle s_i, x_r\rangle} e^{\imath \langle kw, x_r\rangle}\big]_{k\in Z_{K_2}} \ \text{(univariate Vandermone column)}, \\
        \\ R_w(x_r) &=  \big[\exp{(\imath \langle t, x_r\rangle)}\big]_{t \in \Gg_N \setminus 
        \big(\bigcup_{i=1}^{K_1} \mathcal{I}_{w,s_i}\big)}.
    \end{align*}
  
\end{corollary}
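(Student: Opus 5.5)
The corollary is a bookkeeping restatement of Theorem~\ref{thm:van-part}, so I would build the permutation directly from the lattice-ray partition given there. By Theorem~\ref{thm:van-part}, the grid points of $\qq_w$ split as $\Gg(\qq_w)=\bigsqcup_{i=1}^{K_1} I_{w,s_i}$, and the complement $\Gg_N\setminus\Gg(\qq_w)$ indexes the rows of $R_w$. Disjointness of the rays, established in that proof, means that every row index $\omega\in\Gg_N$ lies in exactly one of these sets. The only new ingredient is truncation: I will replace each $I_{w,s_i}$ by a centered sub-ray $\mathcal{I}_{w,s_i}$ of the common odd length $K_2$. The removed points then join the residual index set.

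The steps, in order, are as follows.

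\begin{enumerate}
\item Fix $K_2^*=\min_i K_2^{(i)}$ and $K_2\in\{K_2^*,K_2^*-1\}$ odd. Since $K_2^{(i)}\asymp N$, we still have $K_2\asymp N$.
\item Each $I_{w,s_i}=L_{w,s_i}\cap\qq_w$ is the intersection of an arithmetic progression with a convex set, so it is a run of consecutive points $\{kw+s_i': k=k_0,\dots,k_0+K_2^{(i)}-1\}$.
\item Re-choose the shift along the ray: set $s_i:=s_i'+\big(k_0+\lfloor (K_2^{(i)}-K_2)/2\rfloor+\tfrac{K_2-1}{2}\big)w$. This is an integer combination of grid vectors, so $s_i\in\ZZ^d$, and it lies in $\qq_w\subset\qq_N$, so $s_i\in\Gg_N$. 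With this choice, $\mathcal{I}_{w,s_i}=\{kw+s_i:k\in Z_{K_2}\}\subseteq I_{w,s_i}$. The abuse of notation mentioned in the statement is exactly this renaming of $s_i$. The sets $\mathcal{I}_{w,s_i}$ remain pairwise disjoint because they are subsets of disjoint rays.
\item Order $\Gg_N$ by listing $\mathcal{I}_{w,s_1},\dots,\mathcal{I}_{w,s_{K_1}}$, each in increasing $k$, followed by the remaining points $\Gg_N\setminus\bigcup_i\mathcal{I}_{w,s_i}$ in any fixed order. Let $P$ be the permutation matrix that maps this ordering back to the original ordering of $\Gg_N$. Then $\uu_r=P\,(V_{s_1}(x_r);\dots;V_{s_{K_1}}(x_r);R_w(x_r))$ holds entrywise.
\item Expand $\langle kw+s_i,x_r\rangle=\langle s_i,x_r\rangle+k\langle w,x_r\rangle$. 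This gives the factorization $e^{\imath\langle s_i,x_r\rangle}e^{\imath k\langle w,x_r\rangle}$, so each block is a scaled univariate Vandermonde column in the projected node $\langle w,x_r\rangle$.
\end{enumerate}

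The only point requiring care is step 3: the recentred shift must remain a lattice point in $\Gg_N$, and the symmetric index range $Z_{K_2}$ must fit inside each ray. The first follows from integrality of $w$ and $s_i'$. The second follows because $K_2\le K_2^{(i)}$ and the ray intersections are contiguous. Everything else is a direct relabelling of rows.
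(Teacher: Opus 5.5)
Your proposal is correct and follows essentially the same route as the paper, which justifies the corollary only by noting that each lattice ray from Theorem~\ref{thm:van-part} is truncated to the odd-adjusted length of the shortest ray and that the discarded rows are moved into $R_w$. Your contiguity argument and your explicit recentring of $s_i$ along its ray (the ``slight abuse of notation'') make that bookkeeping precise.
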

Now we state the main result of this section.
\begin{theorem}[Cluster subspace orthogonality]\label{thm:orth}
    Let $\ci$ and $\tilde{\ci}$ form an $(h_1,n_1)$ and $(h_2,n_2)$ clusters respectively where
    \[
        \|x-y\|_{\td} \geq \eta, \\ \forall x \in \ci, \tilde{x}\in \tilde{\ci}.
    \]
    Put $a=\left\lfloor {\sqrt[d-1]{4(\sqrt{d}/2+1)+1}\over 2} \right\rfloor + 1$. Then for ${\pi\over a} \leq \eta \leq 2\pi$, there exist an admissible projection $w\in \Gg_N\setminus \{\boldsymbol{0}\}$ with $\|w\|$ bounded by a constant depending only on $d$ and positive constants $C_2, C_3,C_4$ depending only on $s_1, s_2$ and $C_1, C_5$ depending also on $d$, such that for all $N$ with $C_1 < N \leq {C_2 \over h}$ we have
    \[
        \measuredangle_{\min}(\Ll(\ci,\Gg_N),\Ll(\tilde{\ci},\Gg_N)) \geq {\pi \over 2} - {C_4 \over N\theta_w} - C_3Nh - {C_5 \over N^{1 \over 2}},
    \]
    where $h = \max\{h_1,h_2\}$ and $\theta_w :=  \min_{x\in \ci, \tilde{x}\in \tilde{\ci}}\|\langle w,x\rangle - \langle w,\tilde{x}\rangle\|_\TT$.
\end{theorem}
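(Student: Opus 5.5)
We first fix the admissible projection. Apply Proposition~\ref{prop:proj} with $M=2$ to the configuration $\ci\sqcup\tilde{\ci}$; this is where the constant $a$ with $M^2=4$ comes from. It yields $w\in\Gg_N\setminus\{\boldsymbol{0}\}$ with $\|w\|\le a\sqrt d$ and $\theta_w\ge \frac{\sqrt2\pi}{4(\sqrt d+1)\cdot 4}$. Next we invoke Theorem~\ref{thm:van-part} and Corollary~\ref{cor:van-vec} for this $w$. Up to the permutation $P$ (which preserves inner products), every $\uu\in\Ll(\ci)$ has the form $\uu=\sum_r c_r\uu_r$. Its blocks are $V_{s_i}c$ with $V_{s_i}=V_wD_i$, where $V_w=[e^{\imath k\langle w,x_r\rangle}]_{k\in Z_{K_2}}$ and $D_i=\diag(e^{\imath\langle s_i,x_r\rangle})$. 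The residual part is $R_w c$. The same structure holds for $\tilde\uu=\sum_r\tilde c_r\tilde\uu_r$.

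We then split the inner product as
\[
\langle\uu,\tilde\uu\rangle=\sum_{i=1}^{K_1}\langle V_w(\ci)D_i c,\,V_w(\tilde\ci)\tilde D_i\tilde c\rangle+\langle R_w(\ci)c,R_w(\tilde\ci)\tilde c\rangle .
\]
For the main sum, the projected nodes $\{\langle w,x\rangle\}$ and $\{\langle w,\tilde x\rangle\}$ form two univariate clusters of width at most $2\|w\|h\lesssim h$ whose separation on $\TT$ is at least $\theta_w$. The one-dimensional near-orthogonality result of \cite{batenkov2021} on the symmetric grid $Z_{K_2}$, with $K_2\asymp N$, then gives
\[
|\langle a,b\rangle|\le\Big(\frac{C_4}{K_2\theta_w}+C_3K_2h\Big)\|a\|\|b\|
\]
for all $a\in L(\langle w,\ci\rangle,Z_{K_2})$ and $b\in L(\langle w,\tilde\ci\rangle,Z_{K_2})$. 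The estimate comes from the Dirichlet kernel bound $|\sum_{k\in Z_{K_2}}e^{\imath k t}|\le \pi/|t|_{\TT}$ together with a Taylor expansion within each cluster. Each block $V_w(\ci)D_ic$ lies in that 1D subspace, so we apply the bound termwise and then Cauchy--Schwarz over $i$:
\[
\Big|\sum_i\langle\cdot,\cdot\rangle\Big|\le\Big(\frac{C_4}{N\theta_w}+C_3Nh\Big)\Big(\sum_i\|V_{s_i}c\|^2\Big)^{1/2}\Big(\sum_i\|\tilde V_{s_i}\tilde c\|^2\Big)^{1/2},
\]
which is at most $(\cdot)\|\uu\|\|\tilde\uu\|$.

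The residual term is, we expect, the main obstacle. We need $\|R_wc\|\lesssim N^{-1/4}\|\uu\|$ (and likewise for $\tilde c$), so that Cauchy--Schwarz yields the $C_5N^{-1/2}$ term. The number of residual rows is $(2N+1)^d-K_1K_2$, which by Lemma~\ref{cube-grid-p} is a constant fraction of $N^d$, not a small one. So counting rows alone does not suffice, and we must compare norms. The plan is to use that $\uu$ restricted to any set of rows is a sum of $n_1$ exponentials of a cluster with $Nh\ll1$. We Taylor-expand $e^{\imath\langle\omega,x_r\rangle}$ about the cluster centre $c$, writing $\uu(\omega)=e^{\imath\langle\omega,c\rangle}p(\omega)$ with $p$ close to a low-degree polynomial in $\omega/N$ up to $O(Nh)$ errors. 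Norms of such functions over sub-regions of $\Gg_N$ are comparable, up to constants, to their volume ratio. Then $\|R_wc\|^2/\|\uu\|^2$ is bounded by a constant strictly below $1$; that alone is too weak, so we need to refine it.

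As a fallback, if the volume-comparison estimate only gives a constant fraction, we will instead bound the residual cross term directly. The residual index set is a union of $O(N^{d-1})$ short lattice-ray segments (the truncations) plus the region $\Gg_N\setminus\qq_w$. We handle it by a second decomposition along coordinate lines, each a 1D Vandermonde block where the same separation argument applies with a direction from the standard basis. The terms that fail, namely boundary segments of length $O(\sqrt N)$ or fewer rows, contribute $O(N^{-1/2})$ relative mass; this is how we would obtain the $C_5N^{-1/2}$ term. Combining the pieces gives $\cos\measuredangle_{\min}\le \frac{C_4}{N\theta_w}+C_3Nh+\frac{C_5}{\sqrt N}$. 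Finally, $\arccos(t)\ge\frac\pi2-\frac{\pi}{2}t$ for $t\in[0,1]$, so adjusting constants and requiring $N>C_1$ and $Nh\le C_2$ (to keep $t<1$ and $K_2\asymp N$ valid) completes the proof.
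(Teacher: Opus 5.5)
\textbf{There is a genuine gap: the residual cross term is not controlled.}

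Your treatment of the $K_1$ lattice-ray blocks is correct, and it is the right way to run that step. Since $V_w(\ci)D_ic$ lies in the one-dimensional cluster subspace for every $i$, the 1D bound applies blockwise, and Cauchy--Schwarz over $i$ gives $\rho_{1D}\|\uu\|\|\tilde{\uu}\|$. (The paper's display instead pulls the phases $q_k=e^{\imath\langle s_k,x_j\rangle}$ out as if they did not depend on the column $j$.)

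The difficulty is the residual term $\langle R_w c,R_w\tilde c\rangle$, and neither of your plans closes it. The first is ruled out by your own row count. In the fallback, two claims are unsupported:
\begin{enumerate}
\item You assume that some standard basis direction separates the projected clusters, without saying why.
\item You argue that boundary segments shorter than $\sqrt N$ carry $O(N^{-1/2})$ of the mass, but only by counting rows.
\end{enumerate}
What is actually needed in the second claim is $\|\uu|_S\|^2\le\delta\|\uu\|^2$ uniformly over $\uu\in\Ll(\ci)$. This must hold also in the ill-conditioned directions, where $c$ is large, the exponentials nearly cancel, and $\|\uu\|$ is as small as $\sigma_{\min}(U_N(\ci))\|c\|$ (of order $N^{d/2}(Nh)^{\mu(\ci)}\|c\|$ by Theorem~\ref{thm:sr}). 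A Taylor expansion with $O(Nh)$ remainders controls errors relative to $\|c\|$, not relative to $\|\uu\|$. Making it relative would need either the lower bound of Theorem~\ref{thm:sr}, which requires a $\tau$-separation not assumed here, or a Remez/Tur\'an-type inequality for $n_1$-term exponential sums that is uniform in the node geometry.

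For comparison, the paper keeps the residual separate. It bounds each column pair by Lemma~\ref{lem:trig_cancel}, giving $|\langle R_w(x_j),R_w(\tilde x_t)\rangle|\lesssim N^{d-1}$, and then divides by a lower bound on $\|\uu\|\|\tilde{\uu}\|$; the $N^{-1/2}$ term comes from expanding $\arccos$. That conversion uses $a^*Ga\ge\min_{j,t}\gamma_{j,t}\|a\|^2$ for the Gram matrix $G=[\gamma_{j,t}]$. This fails in the same ill-conditioned directions, since only $\lambda_{\min}(G)\|a\|^2$ is a valid lower bound, so it would not supply your missing estimate either.

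\textbf{How to close it.} Make your first unsupported claim precise, then use it on the whole grid rather than only on the residual.
\begin{enumerate}
\item Since $\|x-y\|_{\TT^d}=\max_k|x_k-y_k|_{\TT}$, fix $x_0\in\ci$, $\tilde x_0\in\tilde{\ci}$ and a coordinate $k$ with $|x_{0,k}-\tilde x_{0,k}|_{\TT}\ge\eta$. The triangle inequality then gives $|x_k-\tilde x_k|_{\TT}\ge\eta-2h$ for all $x\in\ci$, $\tilde x\in\tilde{\ci}$. So $w=e_k$ is an admissible direction with $\|w\|=1$.
\item Partition \emph{all} of $\Gg_N$ into the $(2N+1)^{d-1}$ full lines $\{me_k+s:\ |m|\le N\}$. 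There is no residual at all.
\item On each line, $\uu$ restricts to a one-dimensional Vandermonde matrix in the nodes $\{x_k\}_{x\in\ci}$ times a diagonal phase. The cluster width is at most $h$, and coinciding projected nodes only shrink the span. So the restriction lies in $L(\{x_k\}_{x\in\ci},\{-N,\dots,N\})$.
\item Your blockwise argument plus Cauchy--Schwarz over the lines then gives directly
$\measuredangle_{\min}\big(\Ll(\ci,\Gg_N),\Ll(\tilde{\ci},\Gg_N)\big)\ge\frac{\pi}{2}-\frac{C_4}{N(\eta-2h)}-C_3Nh$, after adjusting constants for $2N+1$ versus $N$.
\end{enumerate}
This is the stated bound even without the $C_5N^{-1/2}$ term. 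It also bypasses both the $\qq_w$ construction and the volumetric search for $w$ in Section~\ref{sec:ex-good-proj}.
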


\begin{proof}
Using Proposition \ref{prop:proj}, for two clusters $\ci$ and $\tilde{\ci}$ (i.e. $M=2$) and ${\pi\over a} \leq \eta \leq 2\pi$, we have that there exists an admissible projection $\omega \in \Gg_N\setminus \{\boldsymbol{0}\}$ with $\|w\|$ depending only on $d$. 
\par Now, let $Y = \{y_1,...,y_{n_1}\}, \ y_j = \langle w,x_j\rangle$, $\tilde{Y} = \{\tilde{y}_1,...,\tilde{y}_{n_2}\}, \ \tilde{y}_j = \langle w,\tilde{x}_j\rangle$.
From corollary \ref{cor:van-vec}, we have 
\[
    \uu_j = P\cdot\begin{pmatrix}
            q_1v_j \\  \vdots \\ q_{K_1}v_j \\ R_{w}(x_j)
        \end{pmatrix}, \ \tilde{\uu}_j = P\cdot\begin{pmatrix}
             \tilde{q}_1 \tilde{v}_j \\  \vdots \\ \tilde{q}_{K_1} \tilde{v}_j \\ R_{w}(\tilde{x}_j)
        \end{pmatrix}, \ v_j = [e^{\imath ky_j}\big]_{k\in Z_{K_2}}, \ \tilde{v}_j = [e^{\imath k\tilde{y}_j}\big]_{k\in Z_{K_2}}
\]
where $q_i := q_i^{(j)} = e^{\imath \langle s_i,x_j\rangle}$ and $\tilde{q}_i, 
\;\tilde{q}_{i}^{(j)}$ defined similarly. For any $\uu \in \Ll\setminus\{0\}, \tilde{\uu} \in \tilde{\Ll}\setminus\{0\}$ we have
\begin{align*}
    {{|\langle \uu, \tilde{\uu}\rangle|}\over {\|\uu\| \cdot \|\tilde{\uu}\|}} &= {{|\langle \sum_{j=1}^{n_1}a_j\uu_j, \sum_{t=1}^{n_2}\tilde{a}_t\tilde{\uu}_t\rangle|}\over {\|\sum_{j=1}^{n_1}a_j\uu_j\| \cdot \|\sum_{t=1}^{n_2}\tilde{a}_t\tilde{\uu}_t\|}} \\
   &=  {{| \sum_{j=1}^{n_1}a_j\sum_{t=1}^{n_2}\tilde{a}_t^*\langle \uu_j, \tilde{\uu}_t\rangle|}\over {\sqrt{\sum_{j=1}^{n_1}a_j\sum_{t=1}^{n_1}a_t^*\langle \uu_j, {\uu}_t\rangle} \cdot \sqrt{\sum_{j=1}^{n_2}\tilde{a}_j\sum_{t=1}^{n_2}\tilde{a}_t^*\langle \tilde{\uu}_j, \tilde{\uu}_t\rangle}}} \\
    &\leq {{| \sum_{j=1}^{n_1}a_j\sum_{t=1}^{n_2}\tilde{a}_t^*\big(\sum_{k=1}^{K_1}q_k\tilde{q}_k^*\langle v_j, \tilde{v}_t\rangle\big) + \langle R_w(x_j), R_w(\tilde{x}_t)\rangle)|}\over {\sqrt{\sum_{j=1}^{n_1}a_j\sum_{t=1}^{n_1}a_t^*\big(\sum_{k=1}^{K_1}q_k{q}_k^*\langle v_j, v_t\rangle\big)} \cdot \sqrt{\sum_{j=1}^{n_2}\tilde{a}_j\sum_{t=1}^{n_2}\tilde{a}_t^*\big(\sum_{k=1}^{K_1}\tilde{q}_k\tilde{q}_k^*\langle \tilde{v}_j, \tilde{v}_t\rangle\big)}}} 
\end{align*}
Now denote by $v = \sum_{j=1}^{n_1}a_jv_j$, $\tilde{v} = \sum_{t=1}^{n_2}\tilde{a}_t\tilde{v}_t$ and let $a := (a_1,...,a_{n_1})^T, \  \tilde{a} := (\tilde{a}_1,...,\tilde{a}_{n_2})^T$, we have 
\begin{align*}
    {{|\langle \uu, \tilde{\uu}\rangle|}\over {\|\uu\| \cdot \|\tilde{\uu}\|}} &\leq \frac{K_1|\langle v, \tilde{v}\rangle| + | \sum_{j=1}^{n_1}a_j\sum_{t=1}^{n_2}\tilde{a}_t^*\langle R_w(x_j), R_w(\tilde{x}_t)\rangle|}{K_1\|v\|\|\tilde{v}\|} \\
    &= \frac{|\langle v, \tilde{v}\rangle|}{\|v\|\|\tilde{v}\|} + \frac{| \sum_{j=1}^{n_1}a_j\sum_{t=1}^{n_2}\tilde{a}_t^*\langle R_w(x_j), R_w(\tilde{x}_t)\rangle|}{K_1\|v\|\|\tilde{v}\|} \\
   & \leq \frac{|\langle v, \tilde{v}\rangle|}{\|v\|\|\tilde{v}\|} + \frac{\sqrt{n_1n_2}\|a\|\|\tilde{a}\|\max_{j,t}|\langle R_w(x_j), R_w(\tilde{x}_t)\rangle|}{K_1\sqrt{\sum_{j=1}^{n_1}a_j\sum_{t=1}^{n_1}a_t^*\langle v_j, v_t\rangle} \cdot \sqrt{\sum_{j=1}^{n_2}\tilde{a}_j\sum_{t=1}^{n_2}\tilde{a}_t^*\langle \tilde{v}_j, \tilde{v}_t\rangle}},
\end{align*}
where the last inequality was obtained using Cauchy-Schwartz. Let $\gamma_{j,t} := \langle v_j, v_t\rangle, \ \tilde{\gamma}_{j,t} := \langle \tilde{v}_j, \tilde{v}_t\rangle$, using Lemma \ref{lem:cluster-inner-prod}, we have $\min_{j,t}\gamma_{j,t} \geq c_3 \cdot N$ and $\min_{j,t}\tilde{\gamma}_{j,t} \geq \tilde{c}_3 \cdot N$, for some constants $c_3,\tilde{c}_3 > 0$. 
Finally we get
\begin{align*}
    {{|\langle \uu, \tilde{\uu}\rangle|}\over {\|\uu\| \cdot \|\tilde{\uu}\|}} 
    &\leq \frac{|\langle v, \tilde{v}\rangle|}{\|v\|\|\tilde{v}\|} + \frac{\sqrt{n_1n_2}\|a\|\|\tilde{a}\|\max_{j,t}|\langle R_w(x_j), R_w(\tilde{x}_t)\rangle|}{K_1\|a\|\|\tilde{a}\|\sqrt{\min_{j,t}\gamma_{j,t}\min_{j,t}\tilde{\gamma}_{j,t}}} \\
    &\leq \frac{|\langle v, \tilde{v}\rangle|}{\|v\|\|\tilde{v}\|} + \frac{\sqrt{n_1n_2}\max_{j,t}|\langle R_w(x_j), R_w(\tilde{x}_t)\rangle|}{K_1\sqrt{c_3\tilde{c}_3}N}.
\end{align*}
Let $j, t$ be such that
$$|\langle R_w(x_{j}), R_w(\tilde{x}_{t})\rangle| = \max_{k,m} |\langle R_w(x_k), R_w(\tilde{x}_m)\rangle|,$$
and let $\phi = x_j - \tilde{x}_t$. Using Lemma \ref{lem:trig_cancel}, for some constant $c_4$ we have 
\[
        \bigg|\langle R_w(x_j), R_w(\tilde{x}_t)\rangle\bigg| = \bigg| \sum_{s\in \overline{\Gg(\qq_w)}}e^{i\langle s,\phi \rangle}\bigg| \leq c_4\cdot N^{d-1}{1\over |1-e^{i\langle \omega,\phi \rangle}|}.
    \]
We now show that $|1-e^{i\langle \omega,\phi \rangle}| \geq c_5$. Recall Lemma 5.6 in \cite{batenkov2021a}, for $|\alpha|=|\beta|=1$ we have 
\[
    {2\over \pi}\text{Arg}\big({\alpha\over\beta}\big) \leq |\alpha - \beta | \leq \text{Arg}\big({\alpha\over\beta}\big).
\]
Since $w$ is an admissible projection we have that $\|\langle w,x_j-\tilde{x}_t\rangle\|_{\TT} = \|\langle w,\phi\rangle\|_{\TT} \geq {\sqrt{2}\pi \over 16(\sqrt{d}+1)}$ (by proposition \ref{prop:proj} with $M=2$), thus:
\[
    {2\over \pi}{\sqrt{2}\pi \over 16(\sqrt{d}+1)} \leq {2\over \pi}\|\langle w,\phi \rangle\|_{\TT} = {2\over \pi}\text{Arg}({e^{i\langle w,\phi \rangle}\over 1})  \leq \big|{{1-e^{i\langle w,\phi \rangle}}}\big| \leq \text{Arg}({e^{i\langle w,\phi \rangle}\over 1}) = \|\langle w,\phi \rangle\|_{\TT} \leq 2\pi,
\]
and consequently, $\big|{1 \over {1-e^{i\langle w,\phi \rangle}}}\big| \leq {1\over c_5}$ for some constant $c_5$ that doesn't depend on $N$ or $h$. 
To conclude, we have 
    \[
        \bigg|\langle R_w(x_j), R_w(\tilde{x}_t)\rangle\bigg| \leq  {c_4 \over c_5} \cdot N^{d-1}.
    \]
For $K_1 \geq c_1N^{d-1}$, put $c_6 := {c_4 \over c_5} {\sqrt{n_1n_2}\over c_1\sqrt{c_3\tilde{c}_3}}$, then
 \begin{align*}
     \min_{\small{\uu \in \Ll\setminus\{0\}, \tilde{\uu} \in \Tilde{\Ll}\setminus\{0\} }}\arccos\biggl({{|\langle \uu, \tilde{\uu}\rangle|}\over {\|\uu\| \cdot \|\tilde{\uu}\|}}\biggr) 
     \geq \min_{\small{v \in L\setminus\{0\}, \tilde{v} \in \Tilde{L}\setminus\{0\} }}\arccos\biggl({{|\langle v, \tilde{v}\rangle|}\over {\|v\| \cdot \|\tilde{v}\|}} + \frac{c_6}{N}\biggr),
 \end{align*}
where $L = L(Y,Z_{K_2})$, $\tilde{L} = L(\tilde{Y},Z_{K_2})$ are given by \prettyref{def:col-span-1d}.

Let $\rho = \frac{|\langle v, \tilde{v}\rangle|}{\|v\|\|\tilde{v}\|}$ and $\nu =  {c_6 \over N}$. Using Taylor expansion of $\arccos$,  we have for $\xi \in (\rho, \rho+\nu)$
    \begin{align*}
        \arccos(\rho + \nu) = \arccos(\rho) - \arccos'(\xi)\nu &= \arccos(\rho) - {\nu \over \sqrt{1-\xi^2}} \geq \arccos(\rho) - {\nu \over \sqrt{1-{(\rho+\nu)}^2}},
    \end{align*}
    and assuming that $\rho < 1-2\nu$, for $C_5 := \sqrt{c_6 \over 2}$, we get
    \begin{align*}
        \arccos(\rho + \nu) &\geq \arccos(\rho) - {\nu \over \sqrt{1-{(1-2\nu+\nu)}^2}} = \arccos(\rho) - {\nu \over \sqrt{2\nu - \nu^2}} \geq \arccos(\rho) - {\sqrt{\nu \over 2}} \\
        & = \arccos\big(\frac{|\langle v, \tilde{v}\rangle|}{\|v\|\|\tilde{v}\|}\big) - {\sqrt{\nu \over 2}} = \arccos\big(\frac{|\langle v, \tilde{v}\rangle|}{\|v\|\|\tilde{v}\|}\big) - {C_5 \over N^{1 \over 2}}.
    \end{align*}
    Now since $v, \tilde{v}$ are columns of the univariate Vandermonde matrix for $((h^{(i)},n_i)_{i=1,2},\theta_w)$-cluster configuration, by Theorem 2.1 in \cite{batenkov2021} there exist positive constants $\tilde{C}_1, C_2, C_3$ and $C_4$ depending only on $s_1, s_2$, such that for all $N$ with $\tilde{C}_1 \leq N \leq {C_2 \over h}$ we have
    \begin{equation}\label{orth-1d}
         \min_{\scriptsize{v \in L\setminus\{0\}, \tilde{v} \in \Tilde{L}\setminus\{0\} }}\arccos\big({{|\langle v, \tilde{v}\rangle|}\over {\|v\| \cdot \|\tilde{v}\|}}\big) \geq {\pi \over 2} - {C_4 \over N\theta_w} - C_3Nh.
    \end{equation}
  
    To finish the proof, we show that for $N > {C_4(\theta_w)^{-1} + 4C_5^2 \over 1+C_3C_2}$, the assumption $\rho < 1-2\nu$ holds.
    Using \eqref{orth-1d}, we have
    \begin{align*}
        \rho \leq \cos{\big({\pi \over 2} - {C_4 \over N\theta_w} - C_3Nh\big)},
    \end{align*}
    using Taylor series of $\cos(x)$ around $\pi \over 2$, for some $\xi \in ({\pi \over 2} - {C_4 \over N\theta_w} - C_3Nh,{\pi \over 2})$ we have
    \begin{align*}
        \rho &= \cos({\pi \over 2}) -\sin({\pi \over 2})(-{C_4\over N\theta_w} - C_3Nh) - {\cos(\xi)\over 2}({C_4\over N\theta_w} + C_3Nh)^2 \\
             &\leq {C_4\over N\theta_w} + C_3Nh \leq {C_4\over N\theta_w} + C_3C_2 
    \end{align*}
   and we find $N$ such that 
   \[
        \rho \leq {C_4\over N\theta_w} + C_3C_2 < 1 - 2{c_6 \over N}
   \]
    holds and put $C_1 := \max\{\tilde{C}_1, {C_4(\theta_w)^{-1} + 4C_5^2 \over 1+C_3C_2}\}$.

\end{proof}

\begin{remark}
    For $d=1$, the bound \eqref{orth-1d} established in Theorem 2.1 in \cite{batenkov2021}  is tighter than the bound established in Theorem \ref{thm:orth}, since the former does not use any redundant projections.
\end{remark}
\begin{remark}\label{rem:inner-cluster-proj}
    As seen in the proof of Theorem~\ref{thm:orth}, the internal geometry of each cluster under an admissible projection is irrelevant; even if nodes within a cluster collide, near-orthogonality is determined solely by the separation between the clusters and the total cluster extent.
\end{remark}
\subsection{Auxiliary lemmas}
We give the proof of lemmas used in Theorem \ref{thm:orth}.
    \begin{lemma}\label{lem:cluster-inner-prod}
        Let $\ci$ form a $(h,n)$ cluster, then for any $x_j,x_t \in \ci$, there exists a constant $c_3$ (independent of $N,h$) such that
        \begin{equation}
            \gamma_{j,t} :=\langle v_j, v_t \rangle \geq c_3 \cdot N,
        \end{equation}
        where $v_j = (e^{kx_j})_{k\in Z_{K_2}}, v_t = (e^{kx_t})_{k\in Z_{K_2}}$ (two columns of a univariate Vandermonde matrix) and $K_2 \asymp N$ is odd.
    \end{lemma}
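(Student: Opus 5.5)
The plan is to compute $\gamma_{j,t}$ in closed form as a Dirichlet kernel and bound it from below using the smallness of $Nh$.

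\textbf{Closed form.} Put $\phi:=x_j-x_t$. Here $x_j,x_t$ stand for the projected nodes $y_j=\langle w,x_j\rangle$, $y_t=\langle w,x_t\rangle$, as the lemma is used in Theorem~\ref{thm:orth}, and the exponent is read as $\imath k x$. Since $Z_{K_2}=\{-\frac{K_2-1}{2},\dots,\frac{K_2-1}{2}\}$ is symmetric (this is why $K_2$ is taken odd), summing the geometric series gives
\[
\gamma_{j,t}=\sum_{k\in Z_{K_2}} e^{\imath k\phi}=D(\phi):=\frac{\sin(K_2\phi/2)}{\sin(\phi/2)},
\]
with $D(0)=K_2$. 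In particular $\gamma_{j,t}$ is real.

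\textbf{Size of $\phi$.} Next I bound $|\phi|$. Because $\ci$ is an $(h,n)$ cluster, $\|x_j-x_t\|_{\TT^d}\le h$. The admissible projection $w$ has $\|w\|\le a\sqrt d$, a constant, so the projected difference satisfies $\|\langle w,x_j-x_t\rangle\|_{\TT}\le \|w\|_1 h\le c\,h$. Since $D$ is $2\pi$-periodic, we may take a representative with $|\phi|\le c h$. By Theorem~\ref{thm:van-part} and Corollary~\ref{cor:van-vec}, $K_2\le \frac{N}{\sqrt d\|w\|}+1\le c'N$, hence
\[
\frac{K_2|\phi|}{2}\le c''Nh.
\]
In the standing regime $h<C/N$ with $C$ small enough, this gives $\frac{K_2|\phi|}{2}\le \frac{\pi}{2}$. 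This is the one place where the hypothesis $Nh\ll 1$ enters; the admissible range of $C$ is what eventually feeds into the constant $C_7$.

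\textbf{Lower bound.} Set $u=K_2\phi/2$ with $|u|\le\pi/2$. Concavity of $\sin$ on $[0,\pi/2]$ gives $\sin|u|\ge \frac{2}{\pi}|u|$. Together with $|\sin(\phi/2)|\le|\phi/2|$, this yields
\[
D(\phi)\ge \frac{\frac{2}{\pi}\cdot K_2|\phi|/2}{|\phi|/2}=\frac{2}{\pi}K_2 .
\]
The case $\phi=0$ is immediate. Finally, $K_2\asymp N$ (more precisely $K_2\ge \lfloor \frac{N-\|w\|}{\sqrt d\|w\|}\rfloor-1\ge cN$ for $N$ large) gives $\gamma_{j,t}\ge c_3N$ with $c_3$ depending only on $d$ and $\|w\|$.

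\textbf{Main obstacle.} There is no deep difficulty. The only delicate point is bookkeeping: making sure the projected separation $|\phi|$ scales like $h$ with a constant independent of $N$, and that the constant $C$ in $h<C/N$ is small enough to keep $K_2|\phi|/2$ inside $[0,\pi/2]$, where the sinc-type bound holds.
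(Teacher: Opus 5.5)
Your proof is correct and follows the paper's argument. It writes $\gamma_{j,t}$ as the Dirichlet kernel $\sin(K_2\phi/2)/\sin(\phi/2)$, uses $Nh\ll 1$ to keep $K_2|\phi|/2\le \pi/2$, and combines $\sin u\ge \frac{2}{\pi}u$ with $\sin(\phi/2)\le \phi/2$ to get $\gamma_{j,t}\ge \frac{2}{\pi}K_2$. Your explicit tracking of the projection, namely the factor $\|w\|_1$ in $|\phi|$ and the bound $K_2\lesssim N/(\sqrt d\|w\|)$, is slightly more careful than the paper, which applies $h_{j,t}\le h$ and $N_2\le N$ directly as if the nodes were already one-dimensional.
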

    \begin{proof}
         Let $h_{j,t} := \|x_j - x_t\|_{\TT^d}$ and $N_2 = {K_2 - 1 \over 2}$. We have
        \begin{align*}
            \gamma_{j,t}=\langle v_j, v_t \rangle = \sum_{k=-N_2}^{N_2}e^{\imath k(x_j - x_t)} = \frac{\sin((N_2+{1\over 2})h_{j,t})}{\sin{({h_{j,t} \over 2})}}.
        \end{align*}
        Since $x_j,x_t$ are in the same cluster we have that $h_{j,t} \leq h$ and $Nh \leq 1$ thus we have $$|(N_2+{1\over 2})h_{j,t}| \leq |Nh + {h\over 2}| \leq |Nh + {1\over 2N}| \leq {\pi\over 2}.$$
        Moreover, for $|\theta| \leq {\pi \over 2}$ we have that ${2\over \pi}|\theta| \leq \sin |\theta | \leq |\theta|$. Thus we get
        \[
            {4\over \pi}(N_2 + {1\over 2}) \leq \frac{\sin((N_2+{1\over 2})h_{j,t})}{\sin{({h_{j,t} \over 2})}} \leq \pi(N_2 + {1\over 2}),
        \]
        and in particular the desired result.
    \end{proof}

\begin{lemma}\label{lem:trig_cancel}[Trigonometric Cancellation]
    For $\omega \in \Gg_N$ such that $\|w\| \leq c$ for some constant $c:=c(d,M)$ and any $t \neq 0$, there exists a constant $c_4$ such that
    \[
        \bigg| \sum_{s\in {\Gg_N\setminus\Gg(\qq_w)}}e^{i\langle s,t\rangle}\bigg| \leq c_4N^{d-1}{1\over |1-e^{i\langle \omega,t\rangle}|}
    \]
\end{lemma}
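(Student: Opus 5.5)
The plan is to decompose $\Gg_N\setminus\Gg(\qq_w)$ along lattice rays in the direction $w$, sum a geometric series on each ray, and count the rays.

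\textbf{Step 1: reduce each ray to a few geometric sums.} The lattice $\ZZ^d$ is partitioned into the cosets $L_{w,s}=\{kw+s:k\in\ZZ\}$, where $s$ ranges over a set of coset representatives. Both $\qq_N:=[-N,N]^d$ and $\qq_w$ are convex. Hence, for each coset, the sets $\{k: kw+s\in\qq_N\}$ and $\{k: kw+s\in\qq_w\}$ are intervals of consecutive integers. By Theorem~\ref{thm:van-part} and Corollary~\ref{cor:van-vec}, on each ray $L_{w,s_i}$ the removed set $\Gg(\qq_w)\cap L_{w,s_i}$ (or its truncation $\mathcal{I}_{w,s_i}$) is one consecutive block of indices. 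On every other ray it is empty. Therefore, on each ray, $(\Gg_N\setminus\Gg(\qq_w))\cap L_{w,s}$ consists of at most two consecutive runs $\{kw+s: a\le k\le b\}$. On one such run,
\[
\Big|\sum_{k=a}^{b} e^{\imath\langle kw+s,t\rangle}\Big|=\Big|\sum_{k=a}^{b} e^{\imath k\langle w,t\rangle}\Big|=\frac{|1-e^{\imath(b-a+1)\langle w,t\rangle}|}{|1-e^{\imath\langle w,t\rangle}|}\le \frac{2}{|1-e^{\imath\langle w,t\rangle}|}.
\]
This assumes $e^{\imath\langle w,t\rangle}\neq1$. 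Otherwise the right-hand side of the lemma is $+\infty$ and there is nothing to prove. Consequently each ray meeting $\qq_N$ contributes at most $4/|1-e^{\imath\langle w,t\rangle}|$.

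\textbf{Step 2: count the rays meeting $\qq_N$.} Each coset that meets $\qq_N$ has a representative $p=kw+s\in\qq_N$ with $0\le\langle p,w\rangle<\|w\|^2$. To get it, shift along the ray: consecutive points of the ray differ by $w$, so $\langle\cdot,w\rangle$ increases by exactly $\|w\|^2$ per step. Thus the number of rays is at most the number of lattice points in $\qq_N\cap S_u(0,\|w\|)$, where $u=w/\|w\|$. This slab has width $\|w\|\le c$.

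To bound that count, I would place a unit cube around each such lattice point. All these cubes lie in the $\sqrt d$-neighborhood of $\qq_N\cap S_u(0,\|w\|)$. That neighborhood is contained in $[-N-\sqrt d,N+\sqrt d]^d\cap S_u(-\sqrt d,\|w\|+2\sqrt d)$. By the slicing bound used in Theorem~\ref{thm:slab} (Ball's theorem), its volume is at most $\sqrt2\,(2N+2\sqrt d)^{d-1}(c+2\sqrt d)$. Hence the number of rays is at most $c' N^{d-1}$, with $c'$ depending only on $d$ and $c=c(d,M)$.

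\textbf{Step 3: conclude.} Summing the Step~1 bound over all rays gives
\[
\Big|\sum_{s\in\Gg_N\setminus\Gg(\qq_w)} e^{\imath\langle s,t\rangle}\Big|\le \frac{4c'N^{d-1}}{|1-e^{\imath\langle w,t\rangle}|},
\]
so $c_4=4c'$.

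The main point to check carefully is Step 1: that the removed set on each ray is a single consecutive block. This holds for $\Gg(\qq_w)$ by convexity of $\qq_w$, and for the truncated blocks $\mathcal{I}_{w,s_i}$ by their construction in Corollary~\ref{cor:van-vec}. Also, in the counting of Step~2, $w$ need not be primitive; this does not matter, since the argument works with cosets of $\ZZ w$ directly.
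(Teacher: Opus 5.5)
Steps~1 and~3 are fine. By convexity, $\Gg_N\setminus\Gg(\qq_w)$ meets each coset of $\ZZ w$ in at most two runs, and each run is a geometric sum of modulus at most $2/|1-e^{\imath\langle w,t\rangle}|$.

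The gap is in Step~2, which is exactly where the factor $N^{d-1}$ has to come from. Your claim that every coset meeting $\qq_N$ contains a point $p\in\qq_N$ with $0\le\langle p,w\rangle<\|w\|^2$ is false. Stepping along the ray changes $\langle\cdot,w\rangle$ by $\|w\|^2$ per step, but nothing keeps you inside $\qq_N=[-N,N]^d$, which is axis-aligned rather than aligned with $w$. (The slab trick does work for $\qq_w$, whose first axis is $w/\|w\|$; that is how Theorem~\ref{thm:van-part} uses it. It does not transfer to $\qq_N$.) For example, take $d=2$, $w=(2,1)$ and $p=(N,-N)$. The points $p+kw=(N+2k,-N+k)$ lie in $[-N,N]^2$ only for $k=0$, while $\langle p,w\rangle=N\ge 5=\|w\|^2$ once $N\ge 5$. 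More generally, index the cosets by $c=x_1-2x_2\in[-3N,3N]$. The roughly $N$ cosets with $\tfrac52 N\lesssim|c|\le 3N$ meet $\qq_N$ but not the slab $S_u(0,\|w\|)$ inside $\qq_N$. So the injection of rays into lattice points of $\qq_N\cap S_u(0,\|w\|)$, on which your count rests, does not exist.

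The conclusion of Step~2 is still true, and the repair is short. Send each coset $L$ meeting $\Gg_N$ to its entry point: the first point $p\in\Gg_N\cap L$, i.e.\ the one with $p-w\notin\qq_N$. This map is injective. The condition $|p_j|\le N<|p_j-w_j|$ for some $j$ confines $p_j$ to $|w_j|$ consecutive integers at one end of $[-N,N]$. Hence there are at most $\|w\|_1(2N+1)^{d-1}$ rays, and your argument then gives the lemma with, e.g., $c_4=4\cdot 3^{d-1}\|w\|_1$.

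Once repaired, your proof is the paper's argument read ray by ray. The paper multiplies the sum by $1-e^{\imath\langle w,t\rangle}$. The sum then collapses to sums over lattice points in the boundary layers $\qq_N\triangle(\qq_N+w)$ and $\qq_w\triangle(\qq_w+w)$ of width at most $\|w\|$ (the regions $E_1,E_2$ there). These points are exactly your run endpoints, and the paper bounds their number by $O(N^{d-1})$ via volume. That formulation needs no convexity or run bookkeeping, only that the summation set differs from its translate by $w$ in $O(N^{d-1})$ lattice points. Yours makes the one-dimensional geometric-sum structure along $w$ explicit and yields a clean explicit constant.
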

where $\Gg(\qq_w) := \{p : p\in \qq_w \cap \ZZ^d\}$ (as in \prettyref{def:num-of-grid-p}) and $\qq_w$ as in \prettyref{def:oriented-cube}.

\begin{proof}

    Let $\qq_N := [-N,N]^d, \ \qq_{w,w} := \{p +w: p\in \qq_w\}, \ \qq_{N,w} := \{p +w: p\in \qq_N\}$. 
    \par Using the above we get
    \begin{align*}
        (1-e^{i\langle \omega,t\rangle})\sum_{s\in {\Gg(\qq_N\setminus \qq_w)}}e^{i\langle s,t\rangle} &= \sum_{s\in {\Gg(\qq_N\setminus \qq_w)}}e^{i\langle s,t\rangle} - \sum_{s\in {\Gg(\qq_N\setminus \qq_w)}}e^{i\langle s + w,t\rangle} \\
        &= \sum_{s\in {\Gg(\qq_N\setminus \qq_w)}}e^{i\langle s,t\rangle} - \sum_{s\in {\Gg(\qq_{N,w}\setminus \qq_{w,w})}}e^{i\langle s,t\rangle} 
    \end{align*}
    Now let $\mathcal{R} = \qq_N \cup \qq_{N,w}$. We have that $\qq_w\cup\qq_{w,w}\subset \mathcal{R}$ because $\qq_{w}\subset\qq_{N-\|w\|}\subset\mathcal{R}$ from the definition of $\qq_w$. From symmetry, a similar argument holds for $\qq_{w,w}$ thus $\qq_{w,w}\subset\mathcal{R}$. We have
    \begin{align*}
        \sum_{s\in {\Gg(\qq_N\setminus \qq_w)}}e^{i\langle s,t\rangle} &= \sum_{s\in {\Gg(\qq_N\setminus \mathcal{R})}}e^{i\langle s,t\rangle} + \sum_{s\in {\Gg(\mathcal{R}\setminus(\qq_w\cup\qq_{w,w}))}}e^{i\langle s,t\rangle} + \sum_{s\in {\Gg(\qq_{w,w}\setminus(\qq_w\cap\qq_{w,w}))}}e^{i\langle s,t\rangle},
    \end{align*}
    and
    \begin{align*}
        \sum_{s\in {\Gg(\qq_{N,w}\setminus \qq_{w,w})}}e^{i\langle s,t\rangle} &= \sum_{s\in {\Gg(\qq_{N,w}\setminus \mathcal{R})}}e^{i\langle s,t\rangle} + \sum_{s\in {\Gg(\mathcal{R}\setminus(\qq_w\cup\qq_{w,w}))}}e^{i\langle s,t\rangle} + \sum_{s\in {\Gg(\qq_{w}\setminus(\qq_w\cap\qq_{w,w}))}}e^{i\langle s,t\rangle}.
    \end{align*}
    Thus we obtain
    \begin{align*}
        (1-e^{i\langle \omega,t\rangle})\sum_{s\in {\Gg(\qq_N\setminus \qq_w)}}e^{i\langle s,t\rangle} &= \sum_{s\in {\Gg(\qq_N\setminus \mathcal{R})}}e^{i\langle s,t\rangle} - \sum_{s\in {\Gg(\qq_{N,w}\setminus \mathcal{R})}}e^{i\langle s,t\rangle} \\
        &+ \sum_{s\in {\Gg(\qq_{w,w}\setminus(\qq_w\cap\qq_{w,w}))}}e^{i\langle s,t\rangle} - \sum_{s\in {\Gg(\qq_{w}\setminus(\qq_w\cap\qq_{w,w}))}}e^{i\langle s,t\rangle}.
    \end{align*}
    Now let 
    \begin{align*}
        &E_1 = (\qq_N\setminus \mathcal{R})\cup(\qq_{N,w}\setminus \mathcal{R}) = (\qq_N\cup\qq_{N,w})\setminus \mathcal{R}, \\
        &E_2 = \big(\qq_{w,w}\setminus(\qq_w\cap\qq_{w,w})\big)\cup\big(\qq_{w}\setminus(\qq_w\cap\qq_{w,w})\big).
    \end{align*}
    First, we have $|\partial E| \leq 2|\partial \qq_N|$ ($d-1$-dimensional volume) and since $\partial \qq_N$ consists of $2d$ faces, each being a $(d-1)$-dimensional hypercube of side length $2N+1$ we get that $|\partial \qq_N| = 2d \cdot (2N+1)^{d-1}$. In addition, the distance from $\partial E$ to $\mathcal{R}$ is at most $\|w\|$. Thus, we can bound the $d$-dimensional volume as $|E| \leq 2d{(2N+1)}^{d-1} \cdot \|w\| \leq c \cdot N^{d-1}$.
    \par Second, $E_2$ is a union of two $d$-dimensional slices of width $\|w\|$, and its volume is 
    $$2|\qq_w|\big(2{N-\|w\| \over \sqrt{d}}\big)^{-1}\|w\| = 2\big(2{N-\|w\| \over \sqrt{d}}\big)^{d-1}\|w\| \leq 2c\cdot \big(2{N-\|w\| \over \sqrt{d}}\big)^{d-1}.  $$
    Finally, using the triangle inequality we obtain
    \begin{align*}
        \bigg|(1-e^{i\langle \omega,t\rangle})\sum_{s\in {\Gg(\qq_N\setminus \qq_w)}}e^{i\langle s,t\rangle}\bigg| &\leq \bigg|\sum_{s\in {\Gg(\qq_N\setminus \mathcal{R})}}e^{i\langle s,t\rangle}\bigg| + \bigg|\sum_{s\in {\Gg(\qq_{N,w}\setminus \mathcal{R})}}e^{i\langle s,t\rangle}\bigg| \\
        &+ \bigg|\sum_{s\in {\Gg(\qq_{w,w}\setminus(\qq_w\cap\qq_{w,w}))}}e^{i\langle s,t\rangle}\bigg| + \bigg|\sum_{s\in {\Gg(\qq_{w}\setminus(\qq_w\cap\qq_{w,w}))}}e^{i\langle s,t\rangle}\bigg| \\ &= \big|E_1\big|_{\ZZ^d} + \big|E_2\big|_{\ZZ^d} \leq 2^d(|E_1| + |E_2|) \leq c_4\cdot N^{d-1},
    \end{align*}
    for some constant $c_4$. 
 See Figure~\ref{fig:trigo-cancel} for an illustration of this proof.

\end{proof}
\begin{figure}
    \centering
    \includegraphics[width=0.5\linewidth]{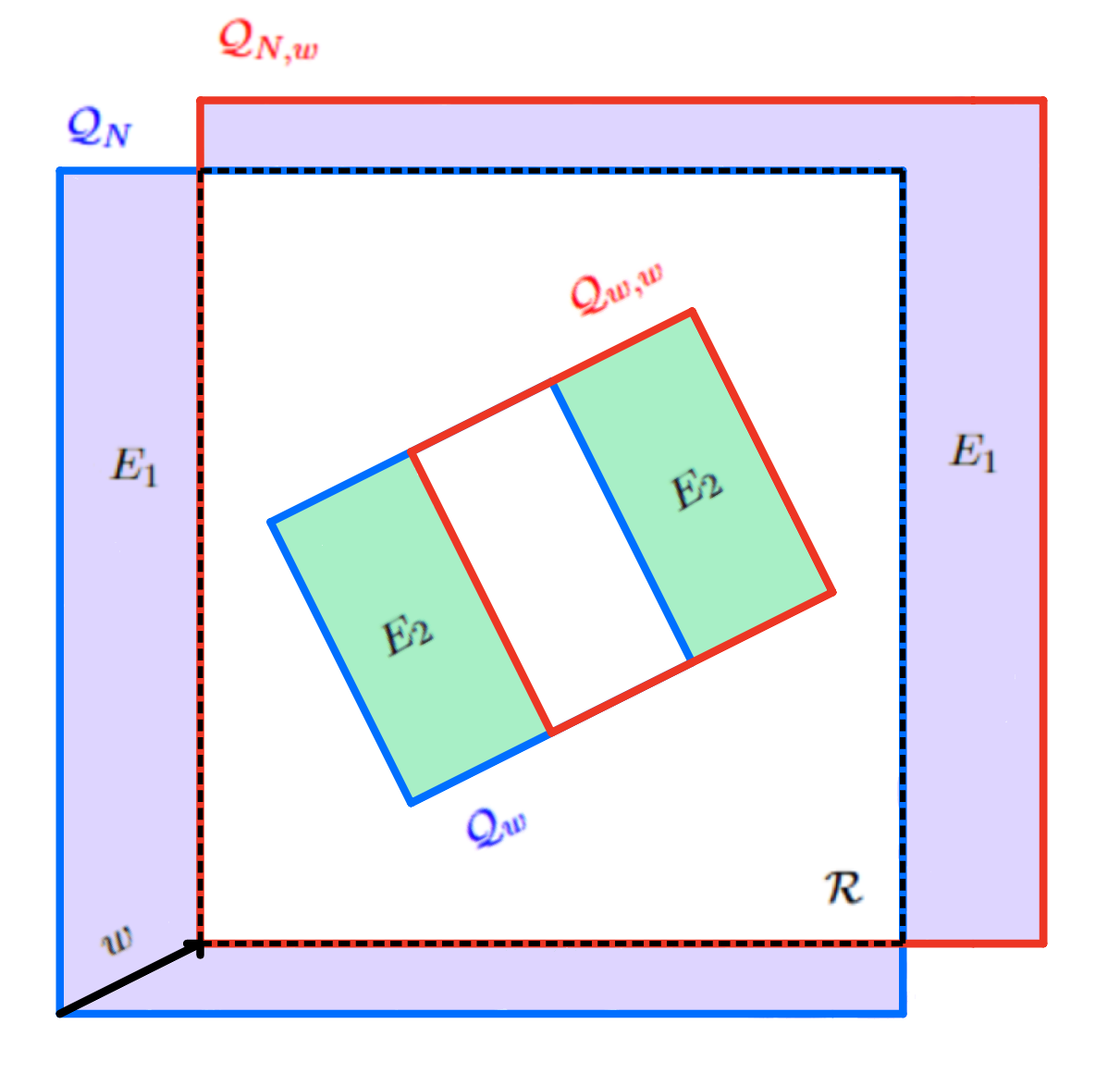}
    \caption{Illustration of Lemma \ref{lem:trig_cancel} for $d=2$. Blue cubes are $\qq_N$ and $\qq_w$. Red cubes are $\qq_{N,w}$ and $\qq_{w,w}$. Black vector is $w$. Purple region is $E_1$ and green region is $E_2$.}
    \label{fig:trigo-cancel}
\end{figure}

\section{Full spectral description}\label{spec-dis}
Here we establish the asymptotic scaling of all the singular values of $U_N(\ci):=U(\ci,\Gg_N)$.
Using the orthogonality result combined with the result on the singular values for a single cluster \cite{diab2024spectral}, we show that the singular values of $U_N$ are a scaled union of the singular values of the sub-matrices of $U(\mathcal{C}^{(j)},\Gg_N)$.

\par First, we recall the following lemma:
\begin{lemma}[Lemma 5.2, \cite{batenkov2021}]\label{lem:sv-sub-mat}
    Let $A\in \CC^{K\times n}, K \geq n$, such that $A$ is given in block form
    \[
        A = \big[A_1,\dots,A_M\big]
    \]
    with $A_j \in \CC^{K\times n^{(j)}}$ and $\sum_{j=1}^M n^{(j)}=n$. Let $\mathcal{L}_j \subset \CC^K$ be the subspace spanned by the columns of the sub-matrix $A_j$. Assume that for all $1\leq j,i \leq M, j\neq i$ and $0\leq\alpha\leq {1\over n}$,
    \[
        \measuredangle_{\min}(\Ll_j,\Ll_i) \geq {\pi \over 2} - \alpha.
    \]
    Moreover let
    \[ \sigma_1 \geq \dots \geq \sigma_n\]
    be the ordered collection of all the singular values of $A$, and let
    \[ \tilde{\sigma}_1 \geq \dots \geq \tilde{\sigma}_n\]
    be the ordered collection of all the singular values of the sub-matrices $\{A_j\}$. Then,
    \[
        (1-n\alpha)^{1\over 2}\tilde{\sigma}_j \leq \sigma_j \leq (1+n\alpha)^{1\over 2}\tilde{\sigma}_j, \ \ j=1,...,n.
    \]

\end{lemma}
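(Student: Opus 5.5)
The argument is a Gram-matrix perturbation. Write $G:=A^*A$ in $M\times M$ block form with blocks $G_{ji}=A_j^*A_i$, and let $G_0:=\diag(G_{11},\dots,G_{MM})$. The eigenvalues of $G$ are the $\sigma_j^2$. The eigenvalues of $G_0$ are the union of the squared singular values of the $A_j$, that is, the $\tilde\sigma_j^2$. The statement is therefore a two-sided multiplicative comparison
\[
(1-n\alpha)G_0 \preceq G \preceq (1+n\alpha)G_0 .
\]
Once this is established, the Courant--Fischer min-max characterization gives $(1-n\alpha)\tilde\sigma_j^2\le \sigma_j^2\le (1+n\alpha)\tilde\sigma_j^2$ for each $j$, and taking square roots yields the claim. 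The condition $\alpha\le 1/n$ ensures $1-n\alpha\ge 0$.

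\textbf{Step 1: bound the cross terms.} Take $c=(c_1,\dots,c_M)$ with $c_j\in\CC^{n^{(j)}}$ and set $u_j:=A_jc_j\in\Ll_j$. Then
\[
c^*Gc=\sum_j\|u_j\|^2+\sum_{j\ne i}\langle u_i,u_j\rangle, \qquad c^*G_0c=\sum_j\|u_j\|^2 .
\]
By the angle hypothesis, $|\langle u_i,u_j\rangle|\le \cos(\pi/2-\alpha)\|u_i\|\|u_j\|=\sin\alpha\,\|u_i\|\|u_j\|\le \alpha\|u_i\|\|u_j\|$. Summing over $i\ne j$ and using Cauchy--Schwarz,
\[
\sum_{i\ne j}\|u_i\|\|u_j\|\le \Big(\sum_j\|u_j\|\Big)^2-\sum_j\|u_j\|^2\le (M-1)\sum_j\|u_j\|^2 .
\]
Hence $|c^*(G-G_0)c|\le (M-1)\alpha\, c^*G_0c\le n\alpha\, c^*G_0c$, since $M\le n$. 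This is exactly the two-sided Loewner comparison above.

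\textbf{Step 2: transfer to eigenvalues.} This step needs care because $G_0$ may be singular, so one cannot simply conjugate by $G_0^{-1/2}$. Instead, apply Courant--Fischer directly to the quadratic forms. We have $\lambda_j(G)=\max_{\dim S=j}\min_{c\in S,\|c\|=1}c^*Gc$, and the same formula holds for $G_0$. The pointwise inequalities $c^*Gc\ge(1-n\alpha)c^*G_0c$ and $c^*Gc\le(1+n\alpha)c^*G_0c$ then pass through the max-min, giving $(1-n\alpha)\lambda_j(G_0)\le\lambda_j(G)\le(1+n\alpha)\lambda_j(G_0)$.

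Both $G$ and $G_0$ are $n\times n$, so the indices match with no padding issue. Since $K\ge n$, the eigenvalues of $A^*A$ are precisely the $n$ squared singular values of $A$, with zeros included. Likewise, the $n$ eigenvalues of $G_0$ are the collection of all $n^{(j)}$ squared singular values of each $A_j$, again with zeros included.

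\textbf{Main obstacle.} The delicate point is Step 1's reduction from the subspace angle condition to the bound on the cross terms. In the minimal-angle definition, vectors of norm zero are excluded, but the inequality $|\langle u_i,u_j\rangle|\le\alpha\|u_i\|\|u_j\|$ holds trivially when some $u_j=0$, so this causes no difficulty. The constant $n$ in the statement is slightly loose compared with the $M-1$ obtained here, which only helps.
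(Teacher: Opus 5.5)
The paper gives no proof of this lemma. It is quoted from \cite{batenkov2021} and used as a black box in Proposition~\ref{thm:multi-sv}, so there is no in-text argument to match yours against.

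Your proof is correct and complete. Each step holds: the bound $|\langle u_i,u_j\rangle|\le\sin\alpha\,\|u_i\|\|u_j\|\le\alpha\|u_i\|\|u_j\|$ for $u_i\in\Ll_i$, $u_j\in\Ll_j$; the Cauchy--Schwarz estimate $\sum_{i\ne j}\|u_i\|\|u_j\|\le (M-1)\sum_j\|u_j\|^2$; and the transfer of $(1-n\alpha)G_0\preceq A^*A\preceq(1+n\alpha)G_0$ through Courant--Fischer, where $1-n\alpha\ge 0$ is exactly what the scaling step needs. You also rightly avoid conjugating by a possibly singular $G_0$. One small point: writing $M\le n$ assumes every block is nonempty, but empty blocks can simply be discarded.

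A common alternative, and the natural source of the factor $n$ in the statement, is to orthonormalize each block. Write $A_j=Q_jR_j$, so that $A=QR$ with $R=\diag(R_1,\dots,R_M)$ carrying the $\tilde\sigma_j$. The off-diagonal entries of $Q^*Q-I$ are bounded by $\sin\alpha$, so an entrywise (Gershgorin) bound on this $n\times n$ matrix gives $\|Q^*Q-I\|\le n\alpha$. One then compares the eigenvalues of $A^*A=R^*(Q^*Q)R$ with those of $R^*R$, via Ostrowski's theorem or the same Loewner argument. Your quadratic-form version skips the factorization and works verbatim for rank-deficient blocks. It also yields the sharper factor $(M-1)\sin\alpha$ instead of $n\alpha$, which would slightly relax the requirement $\alpha\le 1/n$ imposed in Proposition~\ref{thm:multi-sv}.
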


\begin{proposition}\label{thm:multi-sv}
    Let $\mathcal{X}=\{x_1,...,x_n\} \subset \TT^d$ form an $((h^{(j)},n^{(j)})_{j=1}^M),\eta)$-clustered configuration where $\eta$ satisfies the condition in Proposition \ref{prop:proj}. Further, let 
    \[ \sigma_1 \geq \dots \geq \sigma_n\]
    be the singular values of $U_N(\ci)$ and
    \[ \tilde{\sigma}_1 \geq \dots \geq \tilde{\sigma}_n\]
    be all the singular values of sub-matrices $\{U_N(\mathcal{C}^{(j)})\}$. Putting $h = \max_j\{h^{(j)}\}_{j=1}^M$, then there exist positive constants $C_7, C_8$ depending only on $n^{(1)},...,n^{(M)}$ and $C_6,C_9,C_{10}$ depending also on the dimension $d$, such that for all $N$ satisfying $C_6 < N \leq {C_7 \over h}$ we have 
    \begin{equation}
        \big(1 - {C_9 \over N} - C_8Nh - {C_{10} \over N^{1 \over 2}}\big)^{1\over 2}\tilde{\sigma}_j \leq {\sigma}_j \leq \big(1 + {C_9 \over N} + C_8Nh + {C_{10} \over N^{1 \over 2}}\big)^{1\over 2}\tilde{\sigma}_j, \ \ j=1,...,M.
    \end{equation}
\end{proposition}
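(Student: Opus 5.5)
The plan is to combine the pairwise near-orthogonality of Theorem~\ref{thm:orth} with Lemma~\ref{lem:sv-sub-mat}, applied to $A=U_N(\ci)=[U_N(\mathcal{C}^{(1)}),\dots,U_N(\mathcal{C}^{(M)})]$ after a column permutation. Permuting columns does not change the singular values. Here $K=(2N+1)^d\ge n$ for $N$ large, and $\Ll_j=\Ll(\mathcal{C}^{(j)},\Gg_N)$.

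\textbf{Step 1 (pairwise angles).} Fix $j\neq i$. The pair $\mathcal{C}^{(j)},\mathcal{C}^{(i)}$ consists of an $(h^{(j)},n^{(j)})$- and an $(h^{(i)},n^{(i)})$-cluster, and by Definition~\ref{def:multi_cluster} $\|x-y\|_{\TT^d}\ge\eta$ for all $x\in\mathcal{C}^{(j)}$, $y\in\mathcal{C}^{(i)}$. The constant $a$ for $M=2$ is no larger than the one in Proposition~\ref{prop:proj}, so the condition on $\eta$ there implies $\pi/a\le\eta\le 2\pi$ as required in Theorem~\ref{thm:orth}. That theorem gives constants $C_1^{(ji)},\dots,C_5^{(ji)}$ and an admissible $w$ for the pair with $\theta_w\ge \frac{\sqrt2\pi}{16(\sqrt d+1)}$, a constant depending only on $d$. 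Hence for $C_1^{(ji)}<N\le C_2^{(ji)}/h$,
\[
\measuredangle_{\min}(\Ll_j,\Ll_i)\ge \frac{\pi}{2}-\alpha_N,\qquad \alpha_N:=\frac{C_9'}{N}+C_8'Nh+\frac{C_{10}'}{N^{1/2}},
\]
where $C_9'=\max C_4^{(ji)}\theta_w^{-1}$, $C_8'=\max C_3^{(ji)}$ and $C_{10}'=\max C_5^{(ji)}$, each maximum taken over the finitely many pairs. Here $h=\max_j h^{(j)}$ dominates each pair's $h$.

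\textbf{Step 2 (smallness of $\alpha_N$).} Lemma~\ref{lem:sv-sub-mat} needs $0\le\alpha_N\le 1/n$. I will choose $C_7\le\min_{ji} C_2^{(ji)}$ small enough that $C_8'C_7\le 1/(3n)$. I will choose $C_6\ge\max_{ji}C_1^{(ji)}$ large enough that $C_9'/C_6+C_{10}'/C_6^{1/2}\le 2/(3n)$, and also large enough that $(2N+1)^d\ge n$. Then $C_6<N\le C_7/h$ gives $\alpha_N\le 1/n$. Lemma~\ref{lem:sv-sub-mat} then yields $(1-n\alpha_N)^{1/2}\tilde\sigma_j\le\sigma_j\le(1+n\alpha_N)^{1/2}\tilde\sigma_j$. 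Setting $C_9=nC_9'$, $C_8=nC_8'$ and $C_{10}=nC_{10}'$ gives the stated inequality for all $j=1,\dots,n$, in particular for the indices listed.

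\textbf{Main obstacle.} The delicate point is bookkeeping the constants and their dependencies uniformly over all pairs. The admissible projection in Theorem~\ref{thm:orth} is chosen per pair, with $\|w\|$ bounded in terms of $d$ only. Consequently $\theta_w$, $c_4$, $c_5$ and $K_1\ge c_1N^{d-1}$ are uniform in $d$ and $\eta$, and the remaining constants depend on the cluster sizes. It therefore suffices to take maxima and minima over the $\binom M2$ pairs, which are finitely many. One should also check that the hypothesis $\pi/a\le\eta$ coming from Proposition~\ref{prop:proj} (stated with $M$) is at least as strong as the $M=2$ version used in Theorem~\ref{thm:orth}. This holds because $a$ is increasing in $M$.
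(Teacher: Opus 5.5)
Your argument has the same structure as the paper's proof. You apply Theorem~\ref{thm:orth} to every pair of clusters and take maxima and minima of the constants over the finitely many pairs. You then pick $C_6$ large and $C_7$ small so that $\alpha_N\le 1/n$, and invoke Lemma~\ref{lem:sv-sub-mat} with $C_8,C_9,C_{10}$ equal to $n$ times the pairwise constants. Your $C_7\le\min C_2^{(ji)}$ is the correct choice; the paper's proof writes a $\max$ there.

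\textbf{The gap.} The step that fails is the hypothesis check in Step~1, repeated in your last paragraph. Write $a_M=\lfloor \sqrt[d-1]{M^2(\sqrt d/2+1)+1}/2\rfloor+1$. Since $a_M$ increases with $M$, you get $\pi/a_M\le\pi/a_2$. So the condition $\pi/a_M\le\eta$ of Proposition~\ref{prop:proj} is \emph{weaker} than the condition $\pi/a_2\le\eta$ that Theorem~\ref{thm:orth} requires, not stronger. Monotonicity of $a$ gives exactly the opposite of what you need. Concretely, for $d=2$ one has $a_2=4$ and $a_3=9$. With $M=3$, every $\eta\in[\pi/9,\pi/4)$ satisfies the proposition's hypothesis, yet Theorem~\ref{thm:orth}, as stated, does not apply to any pair.

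\textbf{How to repair it.} There are two options.

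\emph{Strengthen the hypothesis.} Assume $\pi/a_2\le\eta\le 2\pi$. This is what Theorem~\ref{thm:entire-spec} actually assumes, and what the paper's proof tacitly uses: it invokes Theorem~\ref{thm:orth} pairwise with $\eta_w=\frac{\sqrt2\pi}{16(\sqrt d+1)}$ without comment.

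\emph{Keep the weaker hypothesis.} Re-run the proof of Theorem~\ref{thm:orth} with a different projection, in one of two ways.
\begin{itemize}
\item Take the single $w$ that Proposition~\ref{prop:proj} supplies for all $M$ clusters simultaneously under $\pi/a_M\le\eta$. It separates every projected pair by at least $\frac{\sqrt2\pi}{4(\sqrt d+1)M^2}$.
\item Redo the two-cluster construction in the larger cube $\qq_{a_M}$. Theorem~\ref{thm:union-bound} holds for any $a$ with $\pi/a\le\theta$, and its separation does not depend on $a$. Proposition~\ref{prop:proj_inG} only needs $a$ above the smaller $M=2$ threshold.
\end{itemize}
The proof of Theorem~\ref{thm:orth} uses only $\|w\|=O(1)$ and the projected separation, so it goes through. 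The cost is that $\|w\|\le a_M\sqrt d$ grows with $M$, and in the first variant so does $\theta_w^{-1}$. Hence the constants that depend on $\|w\|$ and $\theta_w$, which enter $C_6,C_9,C_{10}$, now depend on $M$ as well. This is harmless since $M$ is fixed.
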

\begin{remark}
    In the above Proposition, we proved near-orthogonality using the "projection" technique for each pair of clusters in a configuration. That is, the admissible projection was different for each pair of clusters.
\end{remark}
\begin{proof}
    Applying Theorem \ref{thm:orth} on each pair of clusters $\mathcal{C}^{(j)}, \mathcal{C}^{(k)}, k\neq j$, we get that there exists an admissible projection $w \in \Gg_N\setminus \{\boldsymbol{0}\}$ as in Proposition \ref{prop:proj} and positive constants $C_6, C_7, C_{11},C_{12}$ and $C_{13}$ such that for all $N$ satisfying ${C_6} \leq N \leq {C_7 \over h}$ we have 
    \begin{equation}\label{eq:angle2clusters}
        \measuredangle_{\min}(\Ll(\mathcal{C}^{(j)},\Gg_N),\Ll(\mathcal{C}^{(k)},\Gg_N)) \geq {\pi \over 2} - \alpha, \ \alpha := {C_{11} \over N} + C_{12}Nh + {C_{13} \over N^{1 \over 2}} \leq {1\over n},
    \end{equation}
    where 
   \begin{align*}
    &C_{11} := \max_{1\leq j<k \leq M} {C_4(n^{(j)},n^{(k)})\over \eta_w}, \\
    &\eta_w := {1\over 16}{\sqrt{2}\pi \over (\sqrt{d}+1)} \\
    &C_{12} := \max_{1\leq j<k \leq M} C_3(n^{(j)},n^{(k)}), \\
    &C_{13} := \max_{1\leq j<k \leq M} C_5(n^{(j)},n^{(k)}), \\ 
    &C_6 := \max \big(\max_{1\leq j<k \leq M} C_1(n^{(j)},n^{(k)}), 3nC_{11}, 9n^2C_{13}^2\big), \\
    &C_7 := \max \big(\min_{1\leq j<k \leq M} C_2(n^{(j)},n^{(k)}), \frac{1}{3nC_{12}}\big).
\end{align*}
  Now we assume, without loss of generality that $U_N(\ci)$ is organized in block form, according to the clusters, as follows:
  \[
        U_N(\ci) = \big[U_N(\mathcal{C}^{(1)}),..., U_N(\mathcal{C}^{(M)})\big].
  \]
  By the above, we have the estimate \eqref{eq:angle2clusters} for ${C_6} \leq N \leq {C_7 \over h}$ when $\alpha \leq {1\over n}$.
  Using Lemma~\ref{lem:sv-sub-mat} with $A = U_N(\ci), A_j = U_N(\mathcal{C}^{(j)})$ we get that
  \[
        (1-n\alpha)^{1\over 2}\tilde{\sigma}_j \leq \sigma_j \leq (1+n\alpha)^{1\over 2}\tilde{\sigma}_j, \ \ j=1,...,n,
  \]
  obtaining the desired result with $C_6, C_7$ as above and $C_8=C_{12}n, C_9=C_{11}n$ and $C_{10}=C_{13}n$.
\end{proof}

Combining Theorems \ref{thm:sr} and \ref{thm:multi-sv}, we get the main result of this paper, Theorem \ref{thm:entire-spec}.

\subsection{Numerical validation}
We validate Theorem \ref{thm:entire-spec} for $d=2$. We give three different node configurations (with different cluster geometry) and show the scaling of the singular values of the corresponding Vandermonde matrix \eqref{eq:multiVan} as a function of $N\Delta = {1\over \srf}$, where $\Delta$ is the minimal separation between the nodes. The plots are in log-log scale.
\par All experiments include configuration of two clusters $\mathcal{C}_1$ and $\mathcal{C}_2$ separated by $\eta:=2$. We denote by $n$ the total number of nodes and by $\ell_1,\ell_2$ the number of nodes in each cluster. Also the number of grid samples is fixed (i.e $N$ is fixed) and $\Delta$ consists of the values $10^t$, where $t$ ranges from $-1$ to $-10$ in steps of $-{1\over 5}$.
For each experiment, we have:
\begin{enumerate}
    \item $N = 10, n=6, \ell_1=\ell_2=3$, both $\mathcal{C}_1$ and $\mathcal{C}_2$ in general position (see \prettyref{fig:random}).
    \item $N = 5, n=5, \ell_1=3, \ell_2=2$, $\mathcal{C}_1$ lies on a line and $\mathcal{C}_2$ is in general position (see \prettyref{fig:line}).
    \item $N = 10, n=8, \ell_1=6, \ell_2=2$, $\mathcal{C}_1$ lies on a parabola and $\mathcal{C}_2$ is in general position (see \prettyref{fig:parabola}).
\end{enumerate}
For experiment 2 (\prettyref{fig:line}), we have that the nodes $\ci:=\{(x_i,y_i)\}_{i=1}^3$ of $\mathcal{C}_1$ are on the line $y_i=x_i$ and $\mathcal{C}_1$ are in general position. According to \prettyref{thm:entire-spec}, we should see that the singular values of $U_N(\ci)$ scale like the singular values of $U_N(\mathcal{C}_1)$ and $U_N(\mathcal{C}_2)$. Using Theorem \ref{thm:sr}, for the first cluster we have $\rank(V_{\leq 2}(\mathcal{C}_1))=3$, where
        \begin{align*} V_{\leq 3}(\mathcal{C}_1) = \left[\begin{array}{c|cc|ccc}
                                1 & x_1 & y_1 & x_1^2 & x_1y_1 & y_1^2 \\
                                1 & x_2 & y_2 & x_2^2 & x_2y_2 & y_2^2 \\
                                1 & x_3 & y_3 & x_3^2 & x_3y_3 & y_3^2 
                        \end{array}\right] =  \left[\begin{array}{c|cc|ccc}
                                1 & x_1 & x_1 & x_1^2 & x_1x_1 & x_1^2 \\
                                1 & x_2 & x_2 & x_2^2 & x_2x_2 & x_2^2 \\
                                1 & x_3 & x_3 & x_3^2 & x_3x_3 & x_3^2 
                        \end{array}\right] = \begin{bmatrix}
                            V_0 & | & V_1 & | & V_2
                        \end{bmatrix}.
        \end{align*}
        We get that
        \begin{align*}
            t_0&=\rank(V_{\leq 0})=1,\\
             t_1&=\rank(V_{\leq 1})-\rank(V_{\leq 0})=2-1=1,\\ t_2&=\rank(V_{\leq 2})-\rank(V_{\leq 1})=2-1=1.
        \end{align*} Thus we will have that three of the singular values will scale like $(N\Delta)^0,(N\Delta)^1,(N\Delta)^2$ each.
        For the second cluster we have $\rank(V_{\leq 1}(\mathcal{C}_2))=3$, where
        \begin{align*} V_{\leq 3}(\mathcal{C}_1) = \left[\begin{array}{c|cc}
                                1 & x_1 & y_1 \\
                                1 & x_2 & y_2 \\
                                1 & x_3 & y_3 
                        \end{array}\right] = \begin{bmatrix}
                            V_0 & | & V_1
                        \end{bmatrix}.
        \end{align*}
        We get that
        \begin{align*}
            t_0&=\rank(V_{\leq 0})=1,\\
             t_1&=\rank(V_{\leq 1})-\rank(V_{\leq 0})=3-1=2.
        \end{align*} Thus we will have that two of the singular values will scale like $(N\Delta)^1,(N\Delta)^1$ and one will scale like $(N\Delta)^0$. We leave the details of the other experiments for the reader, as they are identical to the evaluations done for experiment 2.
\begin{figure}[htbp]
    \centering
    \begin{subfigure}[b]{0.43\textwidth}
        \centering
        \includegraphics[width=\textwidth]{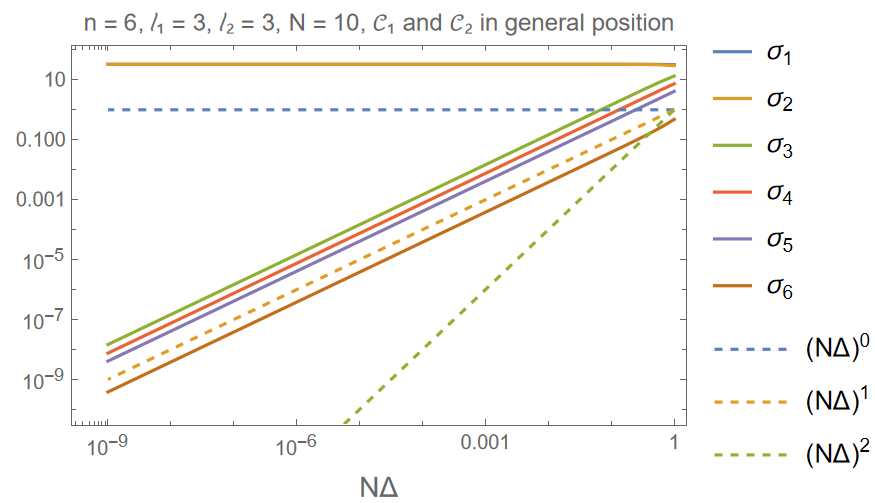}
        \caption{Two singular values scale like $(N\Delta)^0$ and four scale like $(N\Delta)^1$.}
        \label{fig:random}
    \end{subfigure}
    \hfill
    \begin{subfigure}[b]{0.47\textwidth}
        \centering
        \includegraphics[width=\textwidth]{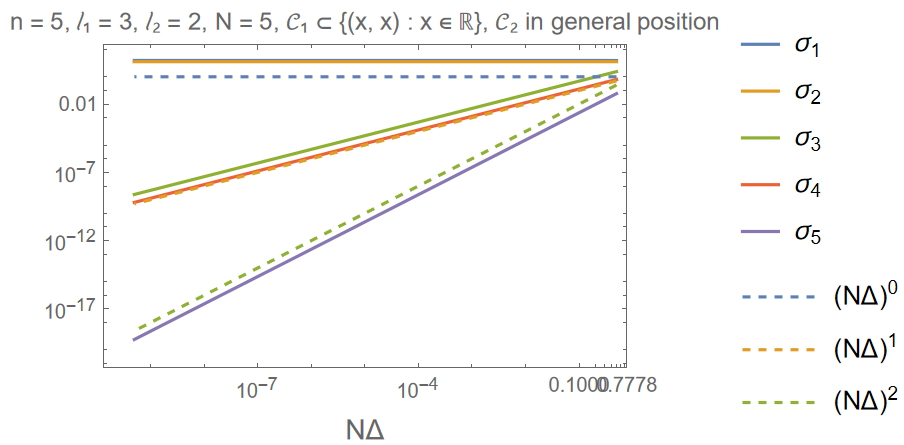}
        \caption{Two singular values scale like $(N\Delta)^0$, two scale like $(N\Delta)^1$ and one scales like $(N\Delta)^2$.}
        \label{fig:line}
    \end{subfigure}
    
    \vspace{0.5cm} 
    
    \begin{subfigure}[b]{0.60\textwidth}
        \centering
        \includegraphics[width=\textwidth]{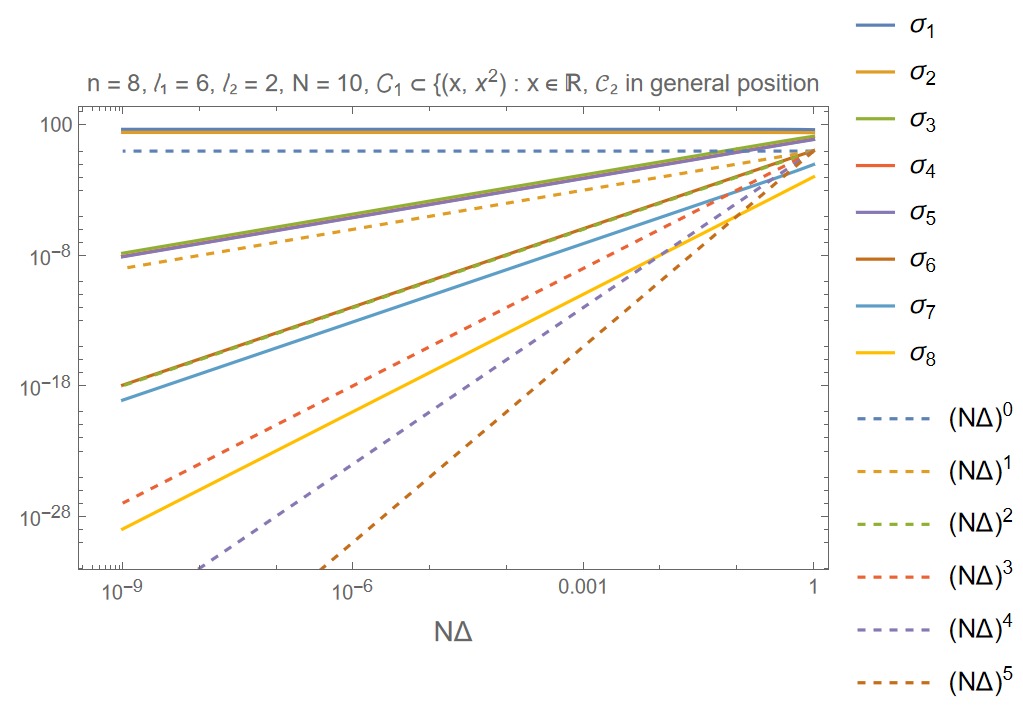}
        \caption{Two singular values scale like $(N\Delta)^0$, three scale like $(N\Delta)^1$, two scale like $(N\Delta)^2$ and one scales like $(N\Delta)^3$.}
        \label{fig:parabola}
    \end{subfigure}
    
    \caption{Singular values of $U_N(\mathcal{C}_1\cup\mathcal{C}_2)$ as a function of $N\Delta$}
    \label{fig:sv-van}
\end{figure}
These results confirm Theorem $\ref{thm:entire-spec}$.

\section{On-grid Min-max error bounds}\label{sec:min-max}

\par This section is organized as follows: we start with some definitions, then introduce the main result of on-grid min-max error bounds (in Theorem \ref{theorem:min-max-error}), then we provide some numerical results showing that 2D-ESPRIT attains the optimal bounds for signals with nodes forming multiple clusters in arbitrary geometry and lastly provide a discussion on min-max error bounds for nodes in general position (nodes with minimal discrete moment order).

\subsection{Preliminaries}
\begin{definition}\label{grid}
    For $\Delta > 0$ let $\Zz = \lfloor {\pi \over \Delta}\rfloor$ and denote by $\Tt_{\Delta}$ the discrete $d$-dimensional grid
    \begin{equation}
        \Tt_{\Delta} := \big\{\boldsymbol{k}\Delta \ : \ \boldsymbol{k}\in \Gg_{\Zz}\big\} \subset \TT^d.
    \end{equation}
    Also define $G:=G(\Delta) = |\Tt| = (2\Zz + 1)^d$.
\end{definition}

\begin{definition}\label{def:config-pro}
    For $h,\eta > 0$ and reference points $\{r_j\}_{j=1}^M \subset \TT^d$, we call a set of nodes $\{x_1,\dots,x_n\}\subset\TT^d$  a  {\bfseries $\mathbf{\big(h,\boldsymbol{\eta},r_1,\dots,r_M\big)}$-configuration profile}, denoted as $\mathcal{P}_h$, if
    \begin{enumerate}
        \item $\mathcal{P}_{h}$ forms a $((\mathcal{C}_j)_{j=1}^M,\eta)$- clustered configuration, where $\mathcal{C}_j$ is a $(h_j,\tau_j,n_j)$-cluster for some $0 < \tau_j \leq 1$ and $0 < h_j\leq h$.
        \item For any $j=1,\dots,M$ we have that for all $x\in\mathcal{C}_j$ we have $\|x-r_j\|_{\TT^d}\leq h$. We say that $r_j$ represents the cluster $\mathcal{C}_j$.
    \end{enumerate}
\end{definition}

\begin{remark}
    For any cluster $\mathcal{C}_j$ in a configuration profile $\mathcal{P}_{h}$, there exists $r_j' \neq r_j$ such that $r_j'$ represents $\mathcal{C}_j$. In fact, there are infinite other representatives.
\end{remark}

\begin{definition}\label{def:prior-con-pro}
     For $\eta,\ell$ as in definition \ref{def:multi_cluster} and $\Delta > 0$ let $\Rr := \Rr\big(\Delta,\eta,\ell,\{r_j\}_{j=1}^M\big)$ be the set of point distributions of the form $f = \sum_{j=1}^na_j\delta_{t_j}$ where 
     \begin{enumerate}
         \item For some constant $\nu \geq 1$ independent of $\Delta$, $\{t_j\}_{j=1}^n$ is a $\big(\nu\Delta,\eta,r_1,\dots,r_M\big)$-configuration profile as in definition~\ref{def:config-pro}.
         \item Every cluster $\mathcal{C}_j \subseteq \{t_j\}_{j=1}^n$ has at most $\ell$ nodes.
         \item $\{t_j\}_{j=1}^n \subset \Tt_{\Delta}$.
     \end{enumerate}
\end{definition}

\begin{definition}\label{def:order}
    Let $o : \Gg_{\Zz} \rightarrow \{1,2,\dots,|\Gg_{\Zz}|\}$ be a bijective function that defines an order on the set $\Gg_{\Zz}$.
\end{definition}
\begin{definition}
    For fixed $N \in \NN, \varepsilon > 0, N\Delta < 1$ and $f \in \Rr\big(\Delta,\eta,\ell,\{r_j\}_{j=1}^M\big)$, let
    \begin{equation}
        B_N^{\varepsilon}(f) = \bigg\{y \in \CC^{(2N+1)^d}\ : \ \bigg(\frac{1}{(2N+1)^d}\sum_{\boldsymbol{k}\in \Gg_N}\big|y_{o(\boldsymbol{k})} - \hat{f}(\boldsymbol{k})\big|^2\bigg)^{{1\over 2}} < \varepsilon\bigg\},
    \end{equation}
    where $\hat{f}$ are the Fourier coefficients as defined in \eqref{eq:samples} and $o(\cdot)$ is a function as defined in Definition~\ref{def:order}.
\end{definition}
\begin{definition}
    Let $\mathcal{A} := \mathcal{A}(\Rr,\Gg_N,\varepsilon)$ be the set of functions $\varphi$ that maps each $y \in \bigcup_{f \in \Rr}B_N^{\varepsilon}$ to a discrete distribution $\varphi(y) \in \Rr\big(\Delta,\eta,\ell,\{r_j\}_{j=1}^M\big)$.
\end{definition}
\begin{definition}
    For $f = \sum_{j=1}^na_j\delta_{t_j}$, the norm $\|f\|_2$ is the discrete $\ell_2$ norm of the coefficients vector:
    \begin{equation}
        \|f\|_2 := \bigg(\sum_{j=1}^n|a_j|^2\bigg)^{1\over 2}.
    \end{equation}
\end{definition}
\begin{definition}\label{def:min-max-hd}
    The $\ell^2$ min-max error for the on-the-grid model is 
    \begin{equation}
        \EE(\Rr,\Gg_N,\varepsilon) = \inf_{\varphi\in \mathcal{A}}\sup_{f \in \Rr}\sup_{y\in B^{\varepsilon}_N(f)} \| \varphi_y - f \|_2,
    \end{equation}
    where $\varphi_y := \varphi(y) \in \Rr$.
\end{definition}

Put $\Rr:=\Rr\big(\Delta,\eta,\ell,\{r_j\}_{j=1}^M\big)$. Then every discrete distribution $f=\sum_{j=1}^{n}a_j\delta_{t_j} \in \mathcal{R}$ can be identified with a sparse vector $x_{f} \in \mathbb{C}^{G}_{\mathcal{R}}\subset \mathbb{C}^{G}$, where $G=G(\Delta)$ from definition \ref{grid}, $\|x\|_0=\#(i|x_i \neq 0)$,
	\begin{equation}\label{xMu}
		(x_{f})_i := \begin{cases} 
			a_j & t_j= \boldsymbol{k}\Delta \in \mathbf{t} \ \land \ 1 \leq o(\boldsymbol{k}) \leq G\\
			0 & \text{otherwise} 
		\end{cases}
	\end{equation}
	for $j = 1,\dots, n$ and
	\[\mathbb{C}^{G}_{\Rr}:=\bigg\{x_{\gamma}: \gamma \in \Rr, x_{\gamma} \in \mathbb{C}^{G}, \|x_{\gamma}\|_0 \leq n \ \text{and $x_{\gamma}$ as defined in $\eqref{xMu}$} \bigg\} \ .\]

	A direct computation shows that for every $\omega \in \mathbb{R}$
	\[ \hat{f}(\omega) = \sum_{j=1}^{n}a_je^{i\langle\omega , t_j\rangle}.\]
	Thus we can write
	\[ \begin{pmatrix}
		\hat{f}(o^{-1}(1)) \\
		\hat{f}(o^{-1}(2)) \\
		\vdots \\
		\hat{f}(o^{-1}(G))
	\end{pmatrix}_{(G \times 1)} = \frac{1}{\sqrt{G}}\begin{pmatrix}
		\mathcal{F}_G
	\end{pmatrix} x_{f},
	\]
	where $\mathcal{F}_G = \big[e^{i\langle k,j\Delta\rangle}\big]_{k\in \Gg_{\Zz}}^{j\in \Gg_{\Zz}}$ is the $G\times G$ Discrete Fourier matrix.

	\begin{corollary}
		Assume that $(2N+1)^d \leq G$, let $\mathcal{F}_{\Gg_N} := \big[e^{i\langle k,j\Delta\rangle}\big]_{k\in \Gg_{N}}^{j\in \Gg_{\Zz}}$ be the sub-matrix of the $G \times G$ DFT matrix $\mathcal{F}_G$ and let $\tilde{\mathcal{F}}_{\Gg_N} = \frac{1}{\sqrt{(2N+1)^d}}\big(
			\mathcal{F}_{\Gg_N}\big)$. Then,
		\[ \tilde{\mathcal{F}}_{\Gg_N}x_{f} = U_{N}w_{f} \ , \ w_{f} =
		\begin{pmatrix}
			a_1 \\
			\vdots \\
			a_s 
		\end{pmatrix} \ . \]
		
	\end{corollary}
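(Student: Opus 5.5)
The plan is a direct entrywise computation: both sides are vectors indexed by $\Gg_N$ (through the ordering $o$), and I will check that they agree coordinate by coordinate. No earlier theorem is needed beyond the definitions of $\Tt_\Delta$, of the embedding \eqref{xMu}, and of $U_N$ in \eqref{eq:multi-van}.

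\textbf{Step 1 (sparsity of $x_f$).} Fix $f=\sum_{j=1}^n a_j\delta_{t_j}\in\Rr$. Condition~3 of Definition~\ref{def:prior-con-pro} gives $t_j\in\Tt_\Delta$. Hence for each $j$ there is a unique $\boldsymbol{k}_j\in\Gg_{\Zz}$ with $t_j=\boldsymbol{k}_j\Delta$. The $\boldsymbol{k}_j$ are distinct because the nodes are distinct, and $o$ is a bijection. So by \eqref{xMu}, the vector $x_f$ has entry $a_j$ at the column labelled by $\boldsymbol{k}_j$ (position $o(\boldsymbol{k}_j)$), and zero elsewhere.

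\textbf{Step 2 (a row of the product).} For a row index $k\in\Gg_N$, the hypothesis $(2N+1)^d\le G$ ensures $\mathcal{F}_{\Gg_N}$ is a genuine row-submatrix of $\mathcal{F}_G$. Multiplying a matrix by a sparse vector only selects the columns where the vector is nonzero, so
\[
(\mathcal{F}_{\Gg_N}x_f)_k=\sum_{\boldsymbol{m}\in\Gg_{\Zz}} e^{\imath\langle k,\boldsymbol{m}\Delta\rangle}(x_f)_{o(\boldsymbol{m})}=\sum_{j=1}^n a_j e^{\imath\langle k,\boldsymbol{k}_j\Delta\rangle}=\sum_{j=1}^n a_j e^{\imath\langle k,t_j\rangle}.
\]
This is exactly the $k$-th row of $U(\{t_j\};\Gg_N)$ applied to $w_f=(a_1,\dots,a_n)^T$. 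It also equals the noiseless sample $\hat f(k)$ from \eqref{eq:samples}.

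\textbf{Step 3 (normalization).} The remaining point is the scalar factor, which is where I expect the only real care to be needed. As written, $\tilde{\mathcal{F}}_{\Gg_N}$ carries a factor $(2N+1)^{-d/2}$, while $U_N$ in \eqref{eq:multi-van} is unnormalized. The identity therefore holds with $U_N$ understood as the normalized matrix $(2N+1)^{-d/2}U(\{t_j\};\Gg_N)$. This is the convention already used for $U_i$ in the proof of Theorem~\ref{thm:entire-spec}, and it matches the normalized norm in $B_N^\varepsilon(f)$. I would state this convention explicitly and then multiply the identity of Step~2 by $(2N+1)^{-d/2}$ on both sides. With that, stacking the rows over $k\in\Gg_N$ in the order $o$ gives $\tilde{\mathcal{F}}_{\Gg_N}x_f=U_Nw_f$.
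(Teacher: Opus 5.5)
Your proposal is correct and follows the same route as the paper, which also obtains the identity by direct computation: it reads off $\hat f(k)=\sum_j a_j e^{\imath\langle k,t_j\rangle}$ entrywise from the sparse embedding $x_f$. Your normalization remark is also right: with $U_N$ unnormalized as in \eqref{eq:multi-van}, the identity holds only when $U_N$ is replaced by $(2N+1)^{-d/2}U_N$, and this is the convention the paper tacitly uses in Section~\ref{sec:min-max} (e.g.\ in the bound $\sigma_{\min}(U_N(\boldsymbol{t}))\geq C(N\Delta)^{\mu(\mathcal{C}_{\boldsymbol{t}})}$).
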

To establish an upper bound on the min-max error rates, we need to analyze the effect of subtracting two cluster configurations. Similarly to the one-dimensional case \cite{batenkov2021a}, subtracting two cluster configurations results in a larger cluster supported on the union of the original ones. We prove this in Lemma~\ref{apx:clusters-diff2}. Moreover, in Lemma~\ref{lem:line-mu}, we prove that the maximal attainable discrete moment order is when the resulting configuration has at most $2\ell$ nodes on a line, leading to the estimate in Theorem \ref{theorem:min-max-error}.
   \begin{lemma}\label{apx:clusters-diff2}
    Fix $\ell,\{r_j\}_{j=1}^M$, and assume that $\frac{2\pi}{a} \leq \eta \leq 2\pi$ for some constant $a>0$. Then for any $f_1, f_2 \in \Rr\big(\Delta,\eta,\ell,\{r_j\}_{j=1}^M\big)$, we have
    \[
        f_1 - f_2 \in \Rr\big(\Delta,\eta',\ell',\{r_j\}_{j=1}^M\big)
    \]
    for some $\ell' \leq 2\ell$, and $\eta'$ satisfying
    \[
        \frac{\pi}{a} \leq \eta' \leq \eta. 
    \]
    
\end{lemma}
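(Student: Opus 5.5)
The plan is to write the difference explicitly and check, one by one, the three conditions of Definition~\ref{def:prior-con-pro} for $f_1-f_2$. Write $f_1=\sum_{j}a_j\delta_{t_j}$ with clusters $\mathcal{C}^{(1)}_1,\dots,\mathcal{C}^{(1)}_M$, and $f_2=\sum_j b_j\delta_{s_j}$ with clusters $\mathcal{C}^{(2)}_1,\dots,\mathcal{C}^{(2)}_M$. Both use the same reference points $r_1,\dots,r_M$, with cluster-size parameters $\nu_1\Delta$ and $\nu_2\Delta$. The support of $f_1-f_2$ lies in $\bigcup_j(\mathcal{C}^{(1)}_j\cup\mathcal{C}^{(2)}_j)$; some coefficients may cancel, but that only removes nodes. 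We will therefore define the candidate clusters $\mathcal{C}_j:=(\mathcal{C}^{(1)}_j\cup\mathcal{C}^{(2)}_j)\cap\supp(f_1-f_2)$.

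\textbf{Step 1 (grid and cardinality).} The on-grid condition is immediate, since both supports lie in $\Tt_\Delta$. Each $\mathcal{C}_j$ has at most $2\ell$ nodes, so $\ell'\le 2\ell$.

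\textbf{Step 2 (cluster diameter and representatives).} Every $x\in\mathcal{C}_j$ satisfies $\|x-r_j\|_{\TT^d}\le \nu\Delta$ with $\nu:=\max(\nu_1,\nu_2)$. The triangle inequality then gives $\|x-y\|_{\TT^d}\le 2\nu\Delta$ for all $x,y\in\mathcal{C}_j$. Hence $\mathcal{C}_j$ is a $(2\nu\Delta,\tau_j,n_j)$-cluster: the lower separation $\tau_j$ exists because distinct grid points are at distance at least $\Delta$, so we may take $\tau_j=1/(2\nu)$. Replacing $\nu$ by $2\nu$, which is still a constant $\ge1$ independent of $\Delta$, the representative condition $\|x-r_j\|\le 2\nu\Delta$ still holds.

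\textbf{Step 3 (inter-cluster separation).} This is the main point. Take $x\in\mathcal{C}_j$ and $y\in\mathcal{C}_k$ with $j\neq k$. If both come from the same $f_i$, then $\|x-y\|_{\TT^d}\ge\eta$ directly. If $x$ comes from $f_1$ and $y$ from $f_2$, choose any node $y'$ of $f_1$'s $k$-th cluster. Then
\[
\|x-y\|_{\TT^d}\ \ge\ \|x-y'\|_{\TT^d}-\|y'-y\|_{\TT^d}\ \ge\ \eta-2\nu\Delta,
\]
since both $y'$ and $y$ are within $\nu\Delta$ of $r_k$. If that cluster of $f_1$ is empty, use $r_k$ and a node of the other cluster instead, or pass through the reference points directly; this gives a bound of the form $\eta-c\nu\Delta$. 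We set $\eta':=\eta-c\nu\Delta\le\eta$. The hypothesis $\eta\ge 2\pi/a$ gives $\eta'\ge 2\pi/a-c\nu\Delta\ge\pi/a$ as soon as $\Delta\le \pi/(ac\nu)$, which holds in the regime $N\Delta<1$ with $N$ large (or is imposed as a smallness condition on $\Delta$).

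\textbf{Main obstacle.} The delicate step is Step 3 when a cluster of one of $f_1,f_2$ is empty, or becomes empty after cancellation. There the separation must be routed through the reference points, so I would first establish the auxiliary bound $\|r_j-r_k\|_{\TT^d}\ge \eta-2\nu\Delta$ for $j\ne k$ (from any pair of existing nodes) and then derive all cross-distances from it. The factor $2$ in $2\pi/a$ is exactly the slack that absorbs the $O(\Delta)$ loss.
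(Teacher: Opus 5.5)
Your proposal is correct and follows essentially the same route as the paper. You merge the clusters of $f_1$ and $f_2$ that share a reference point $r_j$, bound both the merged extent and the loss in inter-cluster separation by $(\nu_1+\nu_2)\Delta$ via the triangle inequality, and use the factor $2$ in $\eta\ge 2\pi/a$ to absorb that loss so that $\eta'\ge\pi/a$.

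Your explicit smallness condition on $\Delta$ is not a defect. The paper's proof tacitly needs the same thing, namely $\nu'\Delta\le\eta/2$, which it builds into the intersection with $[0,\eta/2]$.

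The empty-cluster case you worry about also does not arise. Each element of $\Rr$ carries an $M$-partition into nonempty clusters, so the auxiliary node $y'$ always exists.
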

\begin{proof}
    Let $T_k:=\supp(f_k), \ k=1,2$. Then we have that each $T_k$ is a $\big(\nu_k\Delta,\eta,r_1,\dots,r_M\big)$-configuration profile. For any $t_j \in \supp(f_1)$ define the following sets of distances:
    \[
        \mathcal{S}_j := \max\{\|y - t_j\|_{\TT^d} : y\in T_2\} \cap [0,{\eta \over 2}], \ j = 1,\dots, n.
    \]
    We get that there exists $\nu' \leq \nu_1+\nu_2$ such that $\max_j \{\mathcal{S}_j\} = \nu'\Delta$. Since $\nu'\Delta \leq {\eta\over 2}$, we get $\eta' = \eta - \nu'\Delta \geq \eta - {\eta \over 2} = {\eta \over 2} \geq {\pi \over a}$. We note that each cluster in $\supp(f_1 - f_2)$ has extent at most $\nu'\Delta$ (where $\nu'$ is independent of $\Delta$) and is the union of at most two clusters, one from $T_1$ and one from $T_2$, with the same reference point $r_j$.
\end{proof}

\begin{lemma}\label{lem:line-mu}
Let $\mathcal X=\{x_1,\dots,x_n\}\subset \mathbb R^d$ be a set of $n$ pairwise distinct points contained in an affine line. Then
\[
\mu(\mathcal X)=n-1.
\]
\end{lemma}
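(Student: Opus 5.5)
We parametrize the line and reduce to univariate interpolation. Write $x_i = p + s_i u$ for a base point $p\in\RR^d$, a unit direction $u\in\RR^d$ and scalars $s_1,\dots,s_n$. These scalars are pairwise distinct because the points $x_i$ are distinct. We use the interpolation characterization of $\mu$ stated after Definition~\ref{def:mu-def}: $\mu(\mathcal X)$ is the least $m$ such that every data vector $(v_1,\dots,v_n)$ is matched at the nodes by a polynomial of total degree at most $m$. Equivalently, it is the least $m$ such that $\rank(V_{\le m}(\mathcal X))=n$. We prove the two inequalities $\mu(\mathcal X)\le n-1$ and $\mu(\mathcal X)\ge n-1$ separately.

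For the upper bound, Definition~\ref{def:mu-def} already records $\mu(\mathcal X)\le n-1$. A direct argument is also short. Given data $v_i$, take the univariate Lagrange polynomial $q$ of degree at most $n-1$ with $q(s_i)=v_i$. Set $P(x):=q(\langle x-p,u\rangle)$. This is a polynomial on $\RR^d$ of total degree at most $n-1$, and it satisfies $P(x_i)=q(s_i)=v_i$.

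For the lower bound, suppose $P$ has total degree $m$. Consider its restriction $g(s):=P(p+su)$. Each monomial $x^{\boldsymbol\alpha}$ with $|\boldsymbol\alpha|\le m$ becomes $\prod_k (p_k+s u_k)^{\alpha_k}$, a polynomial in $s$ of degree at most $m$. Hence $g$ is a univariate polynomial of degree at most $m$. Now choose data $v_i=\prod_{j=1}^{n-1}(s_i-s_j)$. This vanishes at $s_1,\dots,s_{n-1}$ and is nonzero at $s_n$. If some $P$ of degree $m\le n-2$ interpolated these data, then $g$ would have degree at most $n-2$. We would then have $g(s_i)=v_i$ for all $i$, so $g$ has the $n-1$ distinct roots $s_1,\dots,s_{n-1}$. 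That forces $g\equiv 0$, contradicting $g(s_n)\neq0$.

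In matrix form the same argument reads as follows. The rows of $V_{\le n-2}(\mathcal X)$ are of the form $[\,g_{\boldsymbol\alpha}(s_i)\,]_{\boldsymbol\alpha}$, where each $g_{\boldsymbol\alpha}$ lies in the $(n-1)$-dimensional space of univariate polynomials of degree at most $n-2$. Therefore $V_{\le n-2}(\mathcal X)$ factors through an $n\times(n-1)$ univariate Vandermonde matrix, and so $\rank V_{\le n-2}(\mathcal X)\le n-1<n$. Either formulation gives $\mu(\mathcal X)\ge n-1$.

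The only step needing care is this factorization, namely that restricting to the line sends $\mathbb P_m$ into univariate polynomials of degree at most $m$. It is immediate from expanding each monomial as above. Combining the two bounds yields $\mu(\mathcal X)=n-1$.
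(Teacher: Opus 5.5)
Your proof is correct and follows essentially the same route as the paper. Both parametrize the line and note that restricting $\mathbb{P}_k$ to it yields univariate polynomials of degree at most $k$, so $\rank V_{\le k}(\mathcal X)\le k+1<n$ for $k<n-1$; both then use univariate interpolation (equivalently, nonsingularity of the univariate Vandermonde matrix) for the reverse inequality. Your explicit construction $P(x)=q(\langle x-p,u\rangle)$ also justifies a step the paper leaves implicit, namely that the vectors $(s_i^j)_{i=1}^n$ lie in the column space of $V_{\le n-1}(\mathcal X)$.
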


\begin{proof}
Since $\mathcal X$ is contained in an affine line, there exist $a,b\in\mathbb R^d$, with $b\neq 0$, and distinct scalars $t_1,\dots,t_n\in\mathbb R$ such that
\[
x_i=a+t_i b,\qquad i=1,\dots,n.
\]

For every multi-index $\alpha\in\mathbb Z_+^d$,
\[
    (a+t_ib)^\alpha, \ i=1,\dots,n,
\]
is a univariate polynomial of degree at most $|\alpha|$. Hence every column of $V_{\le k}(\mathcal X)$ is contained in the span of
\[
(1,\dots,1)^T,\ (t_i)_{i=1}^n,\ \dots,\ (t_i^k)_{i=1}^n,
\]
and therefore
\[
\rank(V_{\le k}(\mathcal X))\le k+1.
\]
In particular, if $k<n-1$, then
\[
\rank(V_{\le k}(\mathcal X))<n.
\]

On the other hand, the vectors
\[
(1,\dots,1)^T,\ (t_i)_{i=1}^n,\ \dots,\ (t_i^{\,n-1})_{i=1}^n
\]
belong to the column space of $V_{\le n-1}(\mathcal X)$, and form the columns of the univariate Vandermonde matrix
\[
\big[t_i^{\,j-1}\big]_{i,j=1}^n.
\]
Since for $i=1,\dots,n$, $t_i$ are pairwise distinct, this matrix is nonsingular. Hence
\[
\rank(V_{\le n-1}(\mathcal X))=n.
\]

Therefore,
\[
\mu(\mathcal X)=n-1.
\]
\end{proof}

\subsection{Min-max theorem and proofs}
    In our setting, we assume that the spike locations are restricted to a discrete grid of step size $\Delta$ and clustered around reference points $\{r_j\}_{j=1}^M$. In effect, for spikes with at most $\ell$ clustered nodes (and any geometry within the clusters), the following result shows that $\mathcal{E} \asymp SRF^{2\ell-1}\epsilon$ as $SRF\to\infty$. 
	
	\begin{theorem}\label{theorem:min-max-error}
        Fix $\ell\geq 2,\{r_j\}_{j=1}^M \subset \Tt_{\Delta}$. For $\Delta > 0$ and let $a:=\left\lfloor {\sqrt[d-1]{4(\sqrt{d}+1)}\over 2}\right\rfloor + 1$. Then the following holds:
		\begin{enumerate}
			\item
			For any ${2\pi\over a} \leq \eta \leq 2\pi$, there exist constants $C_{14},C_{15}$ such that for every $0<\delta_0<C_{15}$, $\alpha \in (0,{1})$ there exists $N_0$ such that for all $N \geq N_0$ and $\max\{{\alpha\delta_0\over\Delta},C_{14}\} < N < {\delta_0\over \Delta}$, it holds that
			\[\mathcal{E}\bigg(\Rr\big(\Delta,\eta,\ell,\{r_j\}_{j=1}^M\big),N,\epsilon\bigg) \lesssim {SRF}^{2\ell-1}\epsilon,\] where $SRF := \frac{1}{N\Delta}$.
			\item
			For any ${\pi\over a} \leq \eta \leq 2\pi$, there exist constants $C_{16},C_{17}$ such that for every $0 < \delta_0 < C_{17}$, $\alpha \in (0,{1})$ there exists $N_0$ such that for all $N \geq N_0$ and $\max\{{\alpha\delta_0\over\Delta},C_{16}\} < N < {\delta_0\over \Delta}$, it holds that
			\[\mathcal{E}\bigg(\Rr\big(\Delta,\eta,\ell,\{r_j\}_{j=1}^M\big),N,\epsilon\bigg) \gtrsim {SRF}^{2\ell-1}\epsilon,\] where $SRF := \frac{1}{N\Delta}$.
		\end{enumerate}
	\end{theorem}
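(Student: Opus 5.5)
The plan follows the one-dimensional argument of \cite{batenkov2021a}. The min-max error is controlled, up to constants, by the worst-case lower restricted isometry constant of $U_N$ on differences of admissible signals. The precise statement is
\[
\mathcal{E}\asymp \epsilon\Big/\min_{f_1,f_2\in\Rr}\sigma_{\min}\big(N^{-d/2}U_N(\supp(f_1-f_2))\big).
\]
The upper bound comes from a minimal-residual estimator. The lower bound comes from exhibiting two admissible signals $f_1\neq f_2$ with $\|N^{-d/2}U_N(f_1-f_2)\|\lesssim\epsilon$ and $\|f_1-f_2\|_2\gtrsim \epsilon\,\srf^{2\ell-1}$. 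Given the normalization of $B_N^\varepsilon$, both directions reduce to estimating the smallest singular value of $N^{-d/2}U_N$ on supports of size at most $2\ell$ per cluster.

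\textbf{Upper bound (part 1).} Let $\varphi(y)$ be any $f\in\Rr$ with $y\in B_N^\varepsilon(f)$; such an $f$ exists by definition of the domain of $\mathcal{A}$. For the true $f$ and the estimate $g=\varphi_y$, the triangle inequality gives $\|N^{-d/2}U_N(g-f)\|\lesssim\epsilon$ in the normalized norm. By Lemma~\ref{apx:clusters-diff2}, $g-f\in\Rr(\Delta,\eta',\ell',\{r_j\})$ with $\ell'\le 2\ell$ and $\pi/a\le\eta'$. This is exactly why part 1 requires $\eta\ge 2\pi/a$, so that $\eta'$ meets the hypothesis of Theorem~\ref{thm:entire-spec}.

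The support of $g-f$ lies on $\Tt_\Delta$, so each merged cluster is an $(\nu'\Delta,\tau,n')$-cluster with $\tau\ge 1/\nu'$ and $n'\le 2\ell$. Hence $\tau h$ is comparable to $\Delta$, and the window $\alpha\delta_0/\Delta<N<\delta_0/\Delta$ translates into the window of Theorem~\ref{thm:entire-spec}.

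By Theorem~\ref{thm:entire-spec}, the smallest singular value of $N^{-d/2}U_N$ on this support scales like $(N\Delta)^{m}$, where $m=\max_i\mu(\mathcal{C}_i)$. Since $\mu(\mathcal{X})\le|\mathcal{X}|-1\le 2\ell-1$ (Definition~\ref{def:mu-def}), we get $\sigma_{\min}\gtrsim (N\Delta)^{2\ell-1}$. Therefore $\|g-f\|_2\lesssim \srf^{2\ell-1}\epsilon$.

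The main obstacle is uniformity of the constants. There are only finitely many combinatorial types once $\ell$ and $M$ are fixed, but the cluster geometries range over infinitely many grid configurations as $\Delta\to 0$. I would handle this by rescaling each cluster by $\Delta$: the rescaled cluster is an integer configuration inside a fixed box of side $2\nu$, which gives a finite list of shapes. Theorem~\ref{thm:sr} then yields constants uniform over that finite list. Proposition~\ref{thm:multi-sv} only depends on $n^{(j)}$, $d$ and $\eta'$, so its constants are already uniform. Taking maxima over the finite list produces $C_{14},C_{15},N_0$.

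\textbf{Lower bound (part 2).} I will construct a bad pair inside a single cluster, say around $r_1$. Take $2\ell$ grid points $r_1+k\Delta e_1$, $k=0,\dots,2\ell-1$, on a coordinate line; these are in $\Tt_\Delta$ since $r_1\in\Tt_\Delta$. The other clusters are left empty or kept identical in $f_1$ and $f_2$. By Lemma~\ref{lem:line-mu}, $\mu=2\ell-1$. By Theorem~\ref{thm:sr}, restricted to a single cluster with $\tau=1/(2\ell-1)$, the $2\ell$-column matrix $N^{-d/2}U_N$ has a right singular vector $u$ with singular value $\asymp(N\Delta)^{2\ell-1}$. (Alternatively, this follows from the tensor structure: along the line, $U_N$ is the 1D Vandermonde matrix repeated $(2N+1)^{d-1}$ times.)

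Scale $u$ to $c\,\epsilon\,(N\Delta)^{-(2\ell-1)}u$. Split its entries into the first $\ell$ nodes, giving $f_1$, and the remaining $\ell$ nodes with a sign flip, giving $f_2$. Each $f_i$ has at most $\ell$ nodes in its cluster, so $f_1,f_2\in\Rr$, and the signal $y=\hat f_1$ lies in both $B_N^\epsilon(f_1)$ and $B_N^\epsilon(f_2)$ for a small constant $c$. Any $\varphi$ therefore errs by at least $\tfrac12\|f_1-f_2\|_2\gtrsim \srf^{2\ell-1}\epsilon$. If required, the other clusters can be populated identically in both signals with separation $\eta\ge\pi/a$, so that both are genuine configuration profiles.
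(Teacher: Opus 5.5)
Your proposal is correct and follows the paper's proof. For the upper bound you use the same feasibility (oracle) estimator combined with Lemma~\ref{apx:clusters-diff2}, Theorem~\ref{thm:entire-spec} and $\mu\le 2\ell-1$; for the lower bound you use the same collinear $2\ell$-node construction, split into two $\ell$-node signals along a minimal singular vector and evaluated via Lemma~\ref{lem:line-mu}. Your finite-shape rescaling argument for uniform constants, and letting only one cluster differ (so the lower bound needs only Theorem~\ref{thm:sr}, or the 1D result via the tensor structure), are sound refinements the paper leaves implicit; note only that your line $r_1+k\Delta e_1$, $k=0,\dots,2\ell-1$, needs $\nu\ge 2\ell-1$ (centering it at $r_1$ reduces this to $\nu\ge\ell$), which the paper also assumes tacitly.
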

	
	The proof largely repeats the arguments from \cite{Vandermonde,Li&Liao2020,batenkov2023}, together with the above established bounds on $\sigma_{\min}(U_N)$.
	
	\subsubsection{Proof of the upper bound}
   
        Let $\epsilon >0$ and for any $y\in \CC^{{(2N+1)}^d}$, let
        \[
            \mathcal{B}(\epsilon,y) = \{\gamma \in \Rr : \|\tilde{\mathcal{F}}_{\Gg_N}x_{\gamma} - y\|_2 \leq \epsilon\}.
        \]
        Moreover, consider the oracle estimator $\varphi_{\epsilon} \in \mathcal{A}$ defined as 
        \[
            \varphi_{\epsilon}(y) = \begin{cases}
                \text{any element of } \mathcal{B}(\epsilon,y) & \text{if }\mathcal{B}(\epsilon,y)\neq \emptyset, \\
                \gamma_0 & \text{else}
            \end{cases}
        \]
        where $\gamma_0$ is an arbitrary element of $\Rr$. Now let $f \in \Rr$ and $y = \tilde{\mathcal{F}}_{\Gg_N}x_{f} + e$ where $\|e\|_2 \leq \epsilon$. Then by definition, we have $f \in \mathcal{B}(\epsilon,y)$. 
    
    Let $\varphi_y := \varphi_{\epsilon}(y)$, then we have:
	\[ \mathcal{E}(\Rr,\Gg_N,\epsilon) \leq \sup_{f \in \Rr}\sup_{y\in B^{\epsilon}_N(\mu)}\|\varphi_y - f\|_2.\]
	By Lemma \ref{apx:clusters-diff2} for any $x_{f} \in \mathbb{C}^{G}_{\Rr\big(\Delta,\eta,\ell,\{r_j\}_{j=1}^M\big)}$, we have
	\begin{align*}
		(\varphi_y-f) &\in \Rr\big(\Delta,\eta',\ell',\{r_j\}_{j=1}^M\big)\\
		\Rightarrow x_{\varphi_y} - x_{f} &\in \mathbb{C}^{G}_{\Rr\big(\Delta,\eta',\ell',\{r_j\}_{j=1}^M\big)}
	\end{align*}
	where $\ell' \leq 2\ell$ and ${\pi\over a}\leq \eta' < \eta$. 
    Therefore, by applying Theorem \ref{thm:entire-spec}, we have constants $C_{14},C_{15}$ where for every $0 < \delta_0 < C_{15}$, $\alpha \in (0,{1})$ there exists $N_0$ such that for all $N \geq N_0$, 
    \begin{equation}\label{eq:N-bounds}
        \max\biggl\{{\alpha\delta_0 \over \Delta}, C_{14}\biggl\} < N < {\delta_0 \over \Delta}
    \end{equation}
    we have:
	\begin{align*}
		\frac{2\epsilon}{\|\varphi_y - f\|_2} &\geq \frac{\bigg|\|\tilde{\mathcal{F}}_{\Gg_N}x_{\varphi_y}-y\|_2 - \|\tilde{\mathcal{F}}_{\Gg_N}x_{f}-y\|_2\bigg|}{\|\varphi_y - f\|_2} \geq \frac{\|\tilde{\mathcal{F}}_{\Gg_N}(x_{\varphi_y}-x_{f})\|_2}{\|\varphi_y - f\|_2} \\
		&= \frac{\|U_{N}(\boldsymbol{t})(w_{\varphi_y}-w_{f})\|_2}{\|w_{\varphi_y}-w_{f}\|_2} 
		\geq \sigma_{\min}(U_{N}(\boldsymbol{t}))  \geq C{(N\Delta)}^{\mu(\mathcal{C}_{\boldsymbol{t}})},
	\end{align*}
    where $\boldsymbol{t}:=\supp(\varphi_y-f)$ and $\mathcal{C}_{\boldsymbol{t}}$ is the largest cluster in $\boldsymbol{t}$. We have that $\mu(\mathcal{C}_{\boldsymbol{t}}) \leq 2\ell-1$, thus
    \begin{align*}
        \frac{2\epsilon}{\|\varphi_y - f\|_2} \geq C{(N\Delta)}^{\mu(\mathcal{C}_{\boldsymbol{t}})}  \geq C{(N\Delta)}^{2\ell-1}.
    \end{align*}

    \begin{remark}
    
        In our formulation of the min-max bound, we fix the configuration $\{r_j\}_{j=1}^M$. On the one hand, we believe this assumption to be a technicality specific to the on-grid setting which can probably be removed. On the other hand, it describes a ``best-case scenario'' where the reconstruction algorithm is already provided with the approximate clusters, and only needs to super-resolve the nodes in each cluster. Therefore, it is reasonable to assume that the exponent $2\ell-1$ cannot be substantially improved if the above assumption is removed.

\end{remark}
    
	\subsubsection{Proof of the lower bound}
	For some $\nu \geq 1$, pick any $(\nu\Delta,\eta,\{r_j\}_{j=1}^M)$-configuration profile $\boldsymbol{t}=(t_1,\dots,t_{2n})$ such that $\boldsymbol{t}=\supp{f}$ for some $f\in \Rr\big(\Delta,\eta,2\ell,\{r_j\}_{j=1}^M\big)$ and the nodes in each cluster belong to an affine one-dimensional subspace (i.e. they form a line). Let $w\in\mathbb{C}^{2n}$ be a unit norm singular vector corresponding to $\sigma_{\min}(U_N(\boldsymbol{t}))$ and define $f=\sum_{j=1}^{2n} (w)_{j} \delta_{t_j}  \in \Rr\big(\Delta,\eta,2\ell,\{r_j\}_{j=1}^M\big)$, so we actually have that $w=w_{f}$. By this construction, we obtain
	\[ \sigma:= \sigma_{\min}(U_{N}(\supp(f))= \|U_{N}w_{f}\|_2 = \|\tilde{\mathcal{F}}_{\Gg_n}x_{f}\|_2 \ . \]
	
	Write $\boldsymbol{t}$ as a disjoint union of two configuration profiles $\boldsymbol{t}_1,\boldsymbol{t}_2$, implying that $f=f_1-f_2$ where $\supp f_i = \boldsymbol{t}_i$ and $f_i\in \Rr\big(\Delta,\eta,\ell,\{r_j\}_{j=1}^M\big)$ for $i=1,2$. Let $x_i=\frac{\epsilon}{\sigma}x_{f_i} \in \mathbb{C}^{G}_{\Rr\big(\Delta,\eta,\ell,\{r_j\}_{j=1}^M\big)}$,  for $i=1,2$, so that $\frac{\epsilon}{\sigma}x_{f} = x_1-x_2$.
	
	Now suppose we are given the data:
	\[ y = \tilde{\mathcal{F}}_{\Gg_N}x_1 = \tilde{\mathcal{F}}_{\Gg_N}x_2 + \tilde{\mathcal{F}}_{\Gg_N}(x_1-x_2).\]
	Let $e:=\tilde{\mathcal{F}}_{\Gg_N}(x_1-x_2) \in \mathbb{C}^{(2N+1)^d}$.
	The previous equations imply:
	\[ \|e\|_2 = \|\tilde{\mathcal{F}}_{\Gg_N}(x_1-x_2)\|_2 = \frac{\epsilon}{\sigma}\|\tilde{\mathcal{F}}_{\Gg_N}x_{f}\|_2 = \epsilon.\]
	For an arbitrary $\varphi$ we have
	\begin{align*}
		\frac{\epsilon}{\sigma} &= \frac{\epsilon}{\sigma} \|w_{f}\|_2 = \frac{\epsilon}{\sigma} \|x_{f}\|_2\\
		&= \|x_1-x_2\|_2 \\
		& \leq \|x_1-x_{\varphi_{y}}\|_2 + \|x_2-x_{\varphi_{y}}\|_2\\
		& \leq 2\max_{k=1,2} \|x_k-x_{\varphi_{y}}\|_2
	\end{align*}
	and so by definition of $\mathcal{E}$ and Theorem \ref{thm:entire-spec} we conclude that for constants $C_{16},C_{17}$ and for every $ 0 < \delta_0 < C_{17}, \alpha \in (0,{1})$ there exists $N_0$ such that for all $N \geq N_0$ and $\max\{{\alpha\delta_0 \over \Delta}, {C_{16}}\} < N < {\delta_0 \over \Delta}$ we have
	\[ \Rightarrow \mathcal{E}(\Rr,\Gg_N,\epsilon) \geq \inf_{\varphi \in \mathcal{A}}\max_{k=1,2}\|x_{\varphi_y} - x_k\|_2 \geq \frac{\epsilon}{2\sigma} \geq \frac{\epsilon}{2 C{(N\Delta)}^{\mu(\mathcal{C}_{\boldsymbol{t}})}}, \]
    where $\mathcal{C}_{\boldsymbol{t}}$ is the largest cluster in $\boldsymbol{t}$ of size $2\ell$ and $\mu(\mathcal{C}_{\boldsymbol{t}}) = 2\ell-1$ from Lemma \ref{lem:line-mu}. \qed
\subsection{Bounds validation via 2D ESPRIT}\label{esprit}
\par One of the most popular signal subspace methods is the ESPRIT (Estimation of Signal Parameters via Rotational Invariance Techniques) method \cite{ESPRIT1990}. Its main idea is to exploit a shift-invariance property inherent in the Vandermonde structure of the signal. For more details and explanation on 2D-ESPRIT see section \ref{appendix:2d-esprit} in the appendix.
We implemented the 2D-ESPRIT algorithm according to \cite[Section IV-A]{sahnoun2017}. We consider the asymptotic regime where $N$ is fixed and the minimal separation between the nodes $\Delta \rightarrow 0$. In all experiments, there are two clusters $\mathcal{C}_1$ and $\mathcal{C}_2$ with the same number nodes $n_1=n_2$ that varied from $2$ to $5$. The clusters are separated by $2$, where $\mathcal{C}_1$ had different geometry in each experiment (either random, a line $y=7x$, or a parabola $y=7x^2$). Moreover, the parameters of ESPRIT were set as follows: $N_1=N_2=50,\; L_1=L_2=20,\;\beta_1=\beta_2=0.5$ and the noise level was set to $\sigma=10^{-10}$.

The results are shown in Figure \ref{fig:esprit-all}. We fitted the scaled error
$$
\mathcal{K}_{x_j}:= | \hat{x}_j^{ESPRIT} - x_j | \cdot N / \sigma
$$
to the curve $\mathcal{K}_{x_j} \sim \Delta^{m}$. It can be seen that the accuracy crucially depends on the geometry, providing best scaling for the random configuration, and worst scaling for the single-line configuration. Comparing these result to the min-max error bounds established in  Theorem~\ref{theorem:min-max-error}, we see that ESPRIT achieves the optimal scaling for nodes on line geometry (see \prettyref{fig:esprit-line}) and achieves better scaling for the other geometries (see figures \ref{fig:esprit-random} and \ref{fig:esprit-parabola}). Thus, using this numerical observation, we conclude that 2D-ESPRIT is likely to be optimal.

\begin{figure}[htbp]
    \centering
    \begin{subfigure}[b]{0.45\textwidth}
        \centering
        \includegraphics[width=\textwidth]{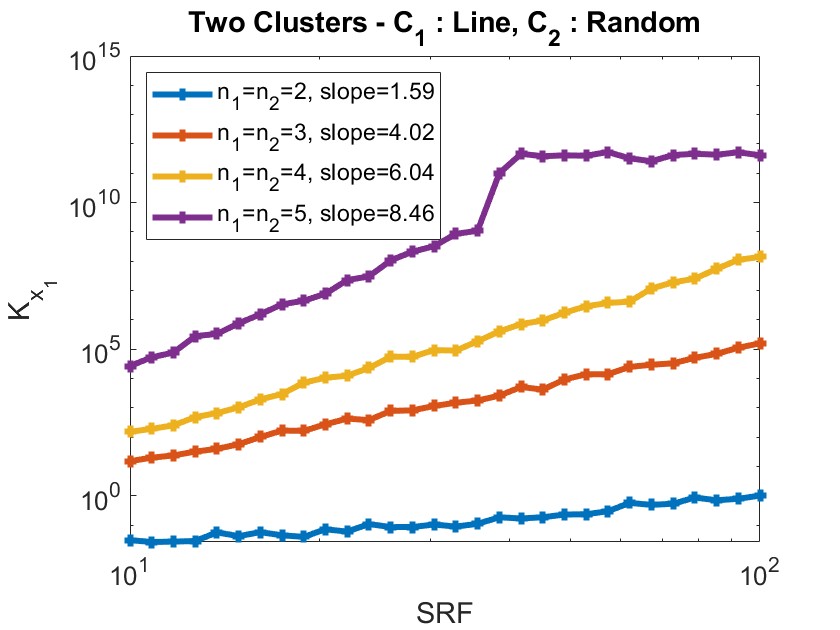}
        \caption{ }
        \label{fig:esprit-line}
    \end{subfigure}
    \hfill
    \begin{subfigure}[b]{0.45\textwidth}
        \centering
        \includegraphics[width=\textwidth]{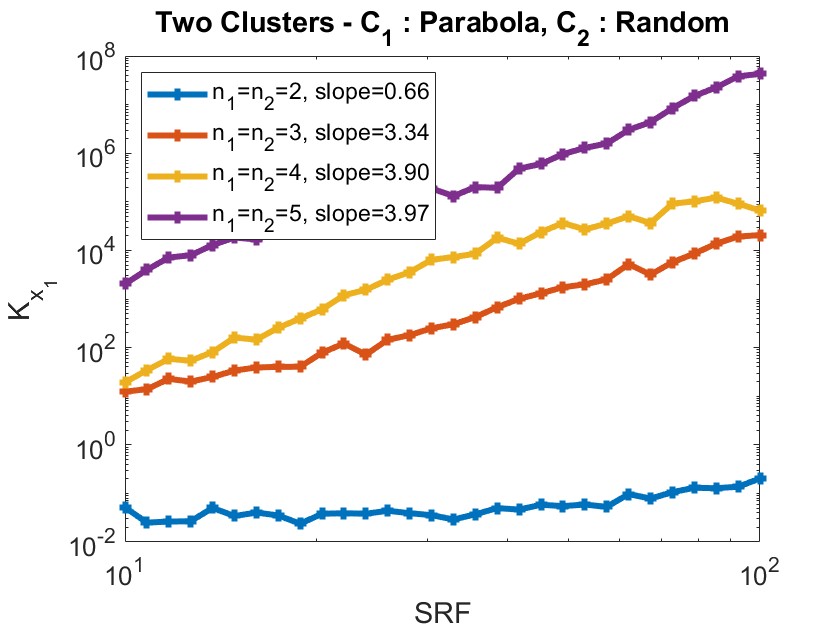}
        \caption{ }
        \label{fig:esprit-parabola}
    \end{subfigure}
    
    \vspace{0.5cm}
    
    \begin{subfigure}[b]{0.50\textwidth}
        \centering
        \includegraphics[width=\textwidth]{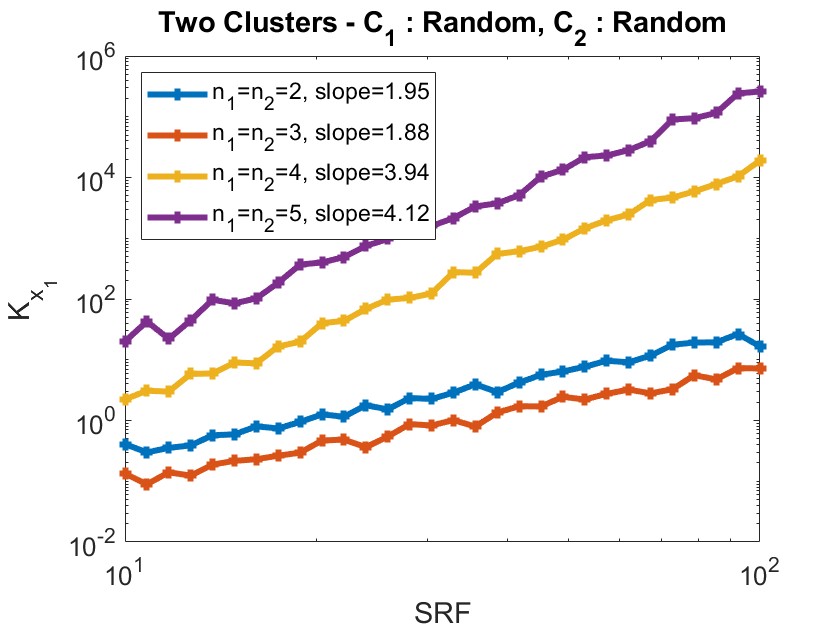}
        \caption{ }
        \label{fig:esprit-random}
    \end{subfigure}
    
    \caption{Accuracy of first node $x_1$ using 2D ESPRIT for different geometries.}
    \label{fig:esprit-all}
\end{figure}

\subsection{Towards min-max error bounds for nodes in general position}\label{subsec:min-max-random}
\par We proved (in Theorem \ref{theorem:min-max-error}) that the "on-grid" min-max error bounds scale like $\mathcal{E}(\mathcal{R},N,\mathcal{\epsilon}) \asymp \srf^{2\ell - 1}\epsilon$. However, the exponent can be smaller than $2\ell-1$. For example, let $\mathcal{X}_1 = \{(0, 0), (\sqrt{2}, 2)\}$ and $\mathcal{X}_2 = \{(1, 1), (2, 4)\}$ represent the node configurations of the measures $\mu_1$ and $\mu_2$, respectively, where the nodes are aligned along a line in each configuration. Then, for the measure $\mu := \mu_1 - \mu_2$, the node configuration becomes $\mathcal{X} = \{(0, 0), (1, 1), (\sqrt{2}, 2), (2, 4)\}$, with the nodes now lying on a parabola. Therefore, the discrete moment order of the union is $2$, which is smaller than $2\ell-1=3$, resulting in a better scaling in Theorem~\ref{theorem:min-max-error}. This phenomenon should be further investigated, since it suggests that any prior knowledge on the geometry of the unknown nodes could affect the scaling and give better stability bounds.


\par In the multi-dimensional case, the nodes are most likely to be in a general position rather than on an algebraic variety. In this context, general position means that the discrete moment order is minimal possible, i.e.\ for a cluster of size $2\ell$ it equals the unique integer $m$ such that ${d + m - 1 \choose d} < 2\ell \leq {d + m \choose d}$. If we restrict the class $\mathcal{R}$ to be all the signals of the form \eqref{eq:spike} with nodes in general position only, we expect the min-max error bounds to be much better than ${\srf^{2\ell - 1}}\epsilon$ and closer to ${\srf^{m}}\epsilon$. Note that such $m$ is the smallest discrete moment order for any cluster of size $2\ell$, since smaller $m$ wouldn't satisfy the condition in definition \ref{def:mu-def}. For comparison, if the nodes are on a line we would have that $m = 2\ell - 1$. We cannot expect the bound ${\srf^{m}}\epsilon$ itself, since the min-max error depends on the difference between two configurations, and the difference between two clusters in general position may not be in general position. If true, this seems to suggest that multidimensional super-resolution may be considerably more stable than in the 1D case, at least for sources in general position. We leave this for future work.

\appendix
\section{2D-ESPRIT}\label{appendix:2d-esprit}

Here we describe the extension of ESPRIT to the two-dimensional setting as in \cite[Section IV-A]{sahnoun2017}. Consider the model \eqref{eq:spike} with $d=2$, i.e.,
\[
f(t) = \sum_{j=1}^n \alpha_j \delta(t - x_j), \quad x_j = (x_{j,1},x_{j,2}) \in \TT^2,
\]
and suppose we are given noisy Fourier measurements on a rectangular grid
\[
\Gg_{N_1,N_2} := \{-N_1,\dots,N_1\} \times \{-N_2,\dots,N_2\}.
\]
Then
\[
\hat{f}(\omega_1,\omega_2)
=
\sum_{j=1}^n \alpha_j e^{\imath(\omega_1 x_{j,1} + \omega_2 x_{j,2})}
+
\epsilon(\omega_1,\omega_2).
\]

To expose the shift-invariance structure, we rearrange the data into a structured matrix. Fix $L_1,L_2$ with $1 \leq L_k \leq 2N_k+1$, and define $K_k := 2N_k+1 - L_k + 1$. For each $(p,q)$ with $1 \leq p \leq K_1$ and $1 \leq q \leq K_2$, consider the block of measurements
\[
Y^{(p,q)} =
\begin{bmatrix}
\hat{f}(p-1,q-1) & \hat{f}(p-1,q) & \cdots & \hat{f}(p-1,q+L_2-2) \\
\hat{f}(p,q-1) & \hat{f}(p,q) & \cdots & \hat{f}(p,q+L_2-2) \\
\vdots & \vdots & & \vdots \\
\hat{f}(p+L_1-2,q-1) & \hat{f}(p+L_1-2,q) & \cdots & \hat{f}(p+L_1-2,q+L_2-2)
\end{bmatrix}.
\]

We define the row-major vectorization $\mathrm{vec}(\cdot)$ by
\[
\mathrm{vec}(Y) :=
\begin{bmatrix}
Y_{1,1} & Y_{1,2} & \cdots & Y_{1,L_2} & Y_{2,1} & \cdots & Y_{L_1,L_2}
\end{bmatrix}^T \in \CC^{L_1L_2}.
\]

The multilevel Hankel matrix is then given by
\[
H =
\begin{bmatrix}
\mathrm{vec}(Y^{(1,1)}) & \mathrm{vec}(Y^{(1,2)}) & \cdots & \mathrm{vec}(Y^{(K_1,K_2)})
\end{bmatrix}
\in \CC^{(L_1L_2)\times (K_1K_2)}.
\]

In the noiseless case, $H$ admits a factorization of the form
\[
H = P \diag(\alpha) Q^T,
\]
where $\alpha = (\alpha_1,\dots,\alpha_n)^T$, and
\[
P = \mathbf{V}_{L_1}(\{x_{j,1}\}_{j=1}^n)\ \odot\ \mathbf{V}_{L_2}(\{x_{j,2}\}_{j=1}^n),
\]
\[
Q = \mathbf{V}_{K_1}(\{x_{j,1}\}_{j=1}^n)\ \odot\ \mathbf{V}_{K_2}(\{x_{j,2}\}_{j=1}^n),
\]
with $\odot$ denoting the Khatri--Rao (column-wise Kronecker) product. 
In particular, $\rank(H)=n$ in the noiseless case.

Let
\[
H = U \Sigma V^*
\]
be the singular value decomposition, and let $U_s \in \CC^{(L_1L_2)\times n}$ denote the matrix of the $n$ leading left singular vectors. Then, we have that $\mathrm{span}(U_s) = \mathrm{span}(P)$.

To recover the nodes, we exploit shift-invariance along each coordinate direction. Recall that the rows of $H$, and hence of $U_s \in \CC^{(L_1L_2)\times n}$, are indexed by pairs $(\ell_1,\ell_2)$ with
\[
0 \leq \ell_1 \leq L_1-1, \quad 0 \leq \ell_2 \leq L_2-1,
\]
according to the row-major vectorization. 

We define submatrices by selecting rows corresponding to shifted index sets. For the first dimension, let
\[
\mathcal{I}_1^- = \{(\ell_1,\ell_2) : 0 \leq \ell_1 \leq L_1-2,\ 0 \leq \ell_2 \leq L_2-1\},
\]
\[
\mathcal{I}_1^+ = \{(\ell_1,\ell_2) : 1 \leq \ell_1 \leq L_1-1,\ 0 \leq \ell_2 \leq L_2-1\}.
\]
We then define $U_s^{(1,-)}$ and $U_{s,(1,-)}$ as the submatrices of $U_s$ formed by selecting rows indexed by $\mathcal{I}_1^-$ and $\mathcal{I}_1^+$, respectively.

Similarly, for the second dimension, define
\[
\mathcal{I}_2^- = \{(\ell_1,\ell_2) : 0 \leq \ell_1 \leq L_1-1,\ 0 \leq \ell_2 \leq L_2-2\},
\]
\[
\mathcal{I}_2^+ = \{(\ell_1,\ell_2) : 0 \leq \ell_1 \leq L_1-1,\ 1 \leq \ell_2 \leq L_2-1\},
\]
and let $U_s^{(2,-)}$ and $U_{s,(2,-)}$ be the corresponding submatrices.
Then there exist matrices $F_1,F_2 \in \CC^{n\times n}$ such that
\[
U_s^{(1,-)} F_1 = U_{s,(1,-)}, \quad
U_s^{(2,-)} F_2 = U_{s,(2,-)}.
\]
These are computed via
\[
F_1 = (U_s^{(1,-)})^\dagger U_{s,(1,-)}, \quad
F_2 = (U_s^{(2,-)})^\dagger U_{s,(2,-)}.
\]

In the noiseless case, $F_1$ and $F_2$ are diagonalizable, and their eigenvalues satisfy
\[
\mathrm{eig}(F_1) = \{e^{\imath x_{j,1}}\}_{j=1}^n, \quad
\mathrm{eig}(F_2) = \{e^{\imath x_{j,2}}\}_{j=1}^n.
\]
To correctly pair the coordinates, one forms
\[
K = \beta_1 F_1 + \beta_2 F_2,
\]
for some $\beta_1,\beta_2 \in \CC$, and computes its eigendecomposition
\[
K = T \Lambda T^{-1}.
\]
Then
\[
D_1 = T^{-1}F_1T, \quad D_2 = T^{-1}F_2T,
\]
are diagonal, and we have
\[
x_{j,1} = \arg((D_1)_{jj}), \quad x_{j,2} = \arg((D_2)_{jj}).
\]
Note that $\beta_1,\beta_2$ are generic coefficients chosen such that the eigenvalues of $K$ are distinct, ensuring that $K$ has a unique eigendecomposition and enabling consistent pairing of the coordinates across dimensions.

The shifts used above are canonical, but this is not necessary. Recall
that the rows of $U_s$ are indexed by $\ell \in \Lambda := \{0,\dots,L_1-1\}
\times \{0,\dots,L_2-1\}$, and that the $j$-th column of $P$ has entries
$e^{\imath\langle \ell,x_j\rangle}$. Hence, for any $m \in \ZZ^2$, we obtain a
shift-invariance relation between the rows of $U_s$ indexed by $\Lambda_m^- :=
\{\ell \in \Lambda : \ell + m \in \Lambda\}$ and those indexed by $\Lambda_m^-+m$.
The corresponding matrix $F_m$ has eigenvalues $\{e^{\imath\langle m,x_j\rangle}
\}_{j=1}^n$. In particular, any $d$ linearly independent shifts determine the
nodes, since the phases $\langle m_k,x_j\rangle$ give a linear system for $x_j$.

It is useful to take more than $d$ shifts when the measurements are noisy.
In this case the linear system is overdetermined and can be solved by least
squares, and the pairing step can be replaced by a joint diagonalization of the
matrices $F_{m_k}$, that is, by finding a single $T$ which makes all of them as close to
diagonal as possible. Note that the shifts cannot be taken too large, since the
phases are determined only modulo $2\pi$ and the number of rows in
$\Lambda_m^-$ decreases as $|m|$ grows.

Compared to the one-dimensional case, the multidimensional setting requires constructing a multilevel Hankel matrix, solving multiple coupled shift-invariance relations and additional pairing step. We describe the 2D-ESPRIT in algorithm \ref{2d-esprit}.

\begin{algorithm}
\caption{2D-ESPRIT}
\begin{algorithmic}[1]\label{2d-esprit}
\REQUIRE Measurements $\hat{f}(\omega_1,\omega_2)$ on $\Gg_{N_1,N_2}$, $L_1,L_2$, model order $n$, $\beta_1,\beta_2$.
\STATE Set $K_k = 2N_k+1 - L_k + 1$.
\STATE Construct the multilevel Hankel matrix $H$.
\STATE Compute SVD: $H = U\Sigma V^*$.
\STATE Set $U_s =$ first $n$ columns of $U$.
\STATE Form shifted matrices $U_s^{(1,-)}, U_{s,(1,-)}, U_s^{(2,-)}, U_{s,(2,-)}$.
\STATE Compute
\[
F_1 = (U_s^{(1,-)})^\dagger U_{s,(1,-)}, \quad
F_2 = (U_s^{(2,-)})^\dagger U_{s,(2,-)}.
\]
\STATE Form $K = \beta_1 F_1 + \beta_2 F_2$.
\STATE Compute eigendecomposition $K = T \Lambda T^{-1}$.
\STATE Compute $D_1 = T^{-1}F_1T$, $D_2 = T^{-1}F_2T$.
\STATE Extract nodes:
\[
x_{j,1} = \arg((D_1)_{jj}), \quad
x_{j,2} = \arg((D_2)_{jj}).
\]
\STATE Recover amplitudes via least squares.
\RETURN $\{x_j\}, \{\alpha_j\}$.
\end{algorithmic}
\end{algorithm}
\bibliographystyle{alpha}
\bibliography{ref}

\end{document}